\providecommand{\LyX}{L\kern-.1667em\lower.25em\hbox{Y}\kern-.125emX\@}
\providecommand{\tabularnewline}{\\}
\numberwithin{equation}{section} 
\numberwithin{figure}{section} 
  \theoremstyle{plain}
  \newtheorem{thm}{Theorem}[section]
  \theoremstyle{definition}
  \newtheorem{defn}[thm]{Definition}
  \theoremstyle{plain}
  \newtheorem{cor}[thm]{Corollary}
  \theoremstyle{plain}
  \newtheorem{fact}[thm]{Fact}
  \theoremstyle{plain}
  \newtheorem{prop}[thm]{Proposition}
  \theoremstyle{remark}
  \newtheorem*{rem*}{Remark}
  \theoremstyle{remark}
  \newtheorem*{acknowledgement*}{Acknowledgement}
  \theoremstyle{plain}
  \newtheorem{lem}[thm]{Lemma}
  \theoremstyle{remark}
  \newtheorem{rem}[thm]{Remark}
 \theoremstyle{definition}
  \newtheorem{example}[thm]{Example}
  \theoremstyle{plain}
  \newtheorem*{conjecture*}{Conjecture}
\def\quot{/\!\!/}
\def\tr{\mathsf{tr}}
\def\de{\mathsf{det}}
\def\Spm{\mathrm{Spm}\!}
\begin{document}

\title{Simultaneous Similarity and Triangularization\\
of Sets of 2 by 2 Matrices}

\author{Carlos A. A. Florentino}

\begin{abstract}
Let $\mathcal{A}=(A_{1},...,A_{n},...)$ be a finite or infinite sequence
of $2\times2$ matrices with entries in an integral domain. We show
that, except for a very special case, $\mathcal{A}$ is (simultaneously)
triangularizable if and only if all pairs $(A_{j},A_{k})$ are triangularizable,
for $1\leq j,k\leq\infty$. We also provide a simple numerical criterion
for triangularization.%
{} 

Using constructive methods in invariant theory, we define a map (with
the minimal number of invariants) that distinguishes simultaneous
similarity classes for non-commutative sequences over a field of characteristic
$\neq2$. We also describe canonical forms for sequences of $2\times2$
matrices over algebraically closed fields, and give a method for finding
sequences with a given set of invariants.

%
{}\vspace{-6mm}

\end{abstract}

\keywords{Simultaneous similarity, Simultaneous triangularization, Invariants
of 2 by 2 matrices. }

\maketitle

\section{Introduction}

\subsection{Motivation}

The properties of a set of square matrices which are invariant under
simultaneous conjugation have been the subject of many investigations.
In the case of a pair of matrices many problems have been solved including
finding criteria for simultaneous similarity, simultaneous triangularization,
existence of common eigenvectors, etc. Analogous problems have been
solved for subalgebras, subgroups or sub-semigroups of square matrices
(see, for example, \cite{Fr,L,RR} and references therein).

Here, we mainly concentrate on the problems of simultaneous similarity,
simultaneous triangularization and canonical forms of countable sets
of $2\times2$ matrices with entries in an arbitrary field, with the
focus on effectively computable solutions (the triangularization results
will hold over integral domains).

Even though we are concerned with $2\times2$ matrices, it is convenient
to describe the setup more generally as follows. Fix integers $m\geq1$,
$n\geq1$ and let $V_{m,n}$ be the vector space of $n$-tuples of
$m\times m$ matrices with entries in a field $\mathbb{F}$. The group
of invertible matrices $G:=GL_{m}(\mathbb{F})$ acts on $V_{m,n}$
by simultaneous conjugation on every component: \begin{equation}
g\cdot\mathcal{A}:=(gA_{1}g^{-1},...,gA_{n}g^{-1}),\label{eq:accao}\end{equation}
where $g\in G$ and $\mathcal{A}=(A_{1},...,A_{n})\in V_{m,n}$. By
analogy with the case $n=1$, the orbit $G\cdot\mathcal{A}:=\{g\cdot\mathcal{A}:g\in G\}\subset V_{m,n}$
will be called the \emph{conjugacy class} of $\mathcal{A}$, and can
be viewed as an element $[\mathcal{A}]\in V_{m,n}/G$ of the quotient
space. One can consider the following problems.

\newcounter{art}\renewcommand{\labelenumi}{(\roman{art})}\addtocounter{art}{1}

\begin{enumerate}
\item Identify one element for each conjugacy class in a natural way, i.e,
list all possible `canonical forms';\addtocounter{art}{1} 
\item Construct invariants that distinguish all conjugacy classes.\addtocounter{art}{1}
%
{}
\end{enumerate}
Naturally, these problems are related. In general, a solution to (i)
will lead to a solution of (ii); however, the answer obtained in this
way might be unnatural, and the description in terms of invariants
is often useful. %
{}

These are difficult questions in general. Here, we will be concerned
with the description and classification of conjugacy classes in the
case $m=2$. As we will see, in this case the questions above admit
simple and complete answers, given by explicitly computable numerical
criteria. In order to state our results in concise terms (see Definition
\ref{def:subsequence}) we will adopt the following terminology for
(ordered) sets of matrices.

\begin{defn}
\label{def:matrix-sequence}An element $\mathcal{A}=(A_{1},...,A_{n})\in V_{m,n}$
will be called a \emph{matrix sequence of size} $m\times m$ \emph{and
length $n$}, or just a \emph{matrix sequence} when the integers $m$
and $n$ are assumed, and the matrices $A_{1},...,A_{n}$ will be
called the \emph{components} or \emph{terms} of $\mathcal{A}$. We
say that two matrix sequences $\mathcal{A}=(A_{1},...,A_{n})$ and
$\mathcal{B}=(B_{1},...,B_{n})$ in $V_{m,n}$ are \emph{similar}
and write $\mathcal{A}\sim\mathcal{B}$ if they lie in the same conjugacy
class (or $G$ orbit) i.e, if there is an element $g\in G$ such that
$\mathcal{B}=g\cdot\mathcal{A}$ (equivalently, $B_{j}=gA_{j}g^{-1}$
for all $j=1,...,n$). 
\end{defn}
We will also allow sequences $\mathcal{A}=(A_{1},...,A_{n},...)\in V_{m,\infty}$
with a countable infinite number of terms.

It turns out that a concrete answer to the above problems requires
the separation of all matrix sequences (of fixed size and length)
into $G$-invariant subclasses with distinct algebraic and/or geometric
properties. For instance, for the class of matrices that pairwise
commute some of these questions are easier. This case can be reduced
to the case of a single matrix ($n=1$) which is classically solved
with the Jordan decomposition theorem (at least when $\mathbb{F}$
is algebraically closed and some term is nonderogatory).

One property that is often relevant is that of simultaneous triangularization
(or block triangularization); it generalizes commutativity and has
a natural geometric interpretation. To be precise, let us consider
the following standard notions. Note that a sequence $\mathcal{A}\in V_{m,n}$
can be viewed either as an ordered set of $m\times m$ matrices, as
in the definition above, or alternatively, as a \emph{single} $m\times m$
matrix whose entries are $n$-dimensional vectors. \begin{equation}
\mathcal{A}=\left(\begin{array}{ccc}
a_{11} & \cdots & a_{1m}\\
\vdots & \ddots & \vdots\\
a_{m1} & \cdots & a_{mm}\end{array}\right),\quad a_{jk}\in\mathbb{F}^{n}.\label{eq:nmatrix}\end{equation}

\begin{defn}
Let $\mathcal{A}$ be a matrix sequence. $\mathcal{A}$ will be called
\emph{commutative} if all its terms pairwise commute (and non-commutative
otherwise). We will say that $\mathcal{A}$ is an \emph{upper triangular
matrix sequence} if all the vectors below the main diagonal are zero
($a_{jk}=0$ for all $j>k$ in equation (\ref{eq:nmatrix})), and
that $\mathcal{A}$ is \emph{triangularizable} if it is similar to
an upper triangular matrix sequence. 
\end{defn}
In geometric terms, $\mathcal{A}$ is triangularizable if and only
if there is a full flag of vector subspaces of $\mathbb{F}^{m}$ which
is invariant under every term $A_{j}$ of $\mathcal{A}$ (for the
standard action of $G$ on $\mathbb{F}^{m}$). Observe also that upper
and lower triangularization are equivalent (over any ring). For the
reasons mentioned, we will add the following \emph{triangularization
problem} to the previous list.

\begin{enumerate}
\item Give an effective numerical criterion for a matrix sequence to be
triangularizable.
\end{enumerate}
Problem (iii) was solved effectively by the following refinement of
McCoy's Theorem (\cite{Mc}). Let $[A,B]=AB-BA$ denote the commutator
of two matrices.

\begin{thm}
\label{thm:McCoy}\emph{{[}McCoy, see \cite{Mc},\cite{L}]} Let $\mathbb{F}$
be algebraically closed. A $m\times m$ matrix sequence $\mathcal{A}$
of length $n$ is triangularizable if and only if\[
p(A_{1},...,A_{n})[A,A']\]
is nilpotent for all monomials $p(x)$ (in non-commuting indeterminants
$x_{1},...,x_{n}$) of total degree not greater than $m^{2}$ and
all terms $A,A'$ of $\mathcal{A}$. 
\end{thm}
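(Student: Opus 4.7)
The plan is to prove the forward (``only if'') direction by direct computation in upper triangular form, and the reverse direction by induction on $m$, reducing the inductive step to finding a common nontrivial invariant subspace via Burnside's theorem together with a trace argument.

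For the forward direction, after conjugating so that each $A_i$ is upper triangular, the product $p(A_1,\ldots,A_n)$ remains upper triangular (upper triangular matrices form a subalgebra), and any commutator $[A,A']$ of upper triangular matrices is strictly upper triangular. The product of an upper triangular matrix with a strictly upper triangular one (from either side) is strictly upper triangular and therefore nilpotent. This works over an arbitrary field and for monomials of any degree, so the necessity is immediate and slightly stronger than stated.

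For the converse, induct on $m$; the case $m=1$ is vacuous. For the inductive step, it suffices to exhibit a proper nonzero subspace $W\subset\mathbb{F}^{m}$ invariant under $A_1,\ldots,A_n$: a suitable change of basis then puts $\mathcal{A}$ into block upper triangular form, the nilpotency hypothesis descends to both diagonal blocks (restriction to an invariant subspace and passage to the quotient preserve nilpotency and commute with the formation of polynomials and commutators), and the induction hypothesis triangularizes each block. To produce $W$, I would argue by contradiction. If no invariant subspace exists, then the unital subalgebra $\mathcal{F}\subseteq M_m(\mathbb{F})$ generated by $A_1,\ldots,A_n$ acts irreducibly on $\mathbb{F}^m$; by Burnside's theorem (this is where algebraic closure is used) $\mathcal{F}=M_m(\mathbb{F})$. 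A standard filtration argument---letting $\mathcal{F}_d$ denote the span of monomials of degree $\le d$, the equality $\mathcal{F}_d=\mathcal{F}_{d+1}$ forces $\mathcal{F}_d=\mathcal{F}$, so the ascending chain must strictly grow until it fills $M_m(\mathbb{F})$---shows that monomials of degree at most $m^2$ already span $\mathcal{F}$; this is the origin of the bound in McCoy's statement. Now fix any two terms $A,A'$ of $\mathcal{A}$. By hypothesis $p(A_1,\ldots,A_n)[A,A']$ is nilpotent for every such monomial $p$, hence has zero trace, and by linearity $\tr(X[A,A'])=0$ for every $X\in\mathcal{F}=M_m(\mathbb{F})$. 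Nondegeneracy of the trace pairing (choose $X=E_{ji}$ to read off the $(i,j)$-entry of $[A,A']$) forces $[A,A']=0$. Thus the terms of $\mathcal{A}$ pairwise commute; but a commuting family of matrices over an algebraically closed field has a common eigenvector, contradicting the assumed irreducibility.

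The main obstacle is the clean identification of the degree bound $m^2$ with a dimension count in the generated algebra, and the verification that the nilpotency hypothesis is genuinely inherited by the diagonal blocks under restriction and quotient (including the mild point that if $p[A,A']$ is nilpotent for all monomials $p$ of degree $\le m^2$ in the full sequence, then the analogous condition holds on each block, where one needs only degree $\le k^2$ for a block of size $k$). Once these points are in place, Burnside's theorem, the nondegeneracy of the trace pairing, and the existence of a common eigenvector for a commuting family deliver the result; the rest of the argument is routine.
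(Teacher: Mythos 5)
The paper does not prove Theorem~\ref{thm:McCoy}; it is cited from McCoy and Laffey, so there is no in-paper argument to compare against. Your proof, however, is a correct and essentially the standard modern argument (the one found, for instance, in Radjavi--Rosenthal). The necessity direction is routine and correct: upper triangular matrices form a subalgebra, commutators of upper triangular matrices are strictly upper triangular, and the product of an upper triangular matrix with a strictly upper triangular one (on either side) is again strictly upper triangular, hence nilpotent. For sufficiency, inducting on $m$ and reducing to the existence of a common nontrivial invariant subspace is the right strategy, and the three ingredients you assemble --- Burnside's theorem for the irreducible case, the dimension-count filtration $\mathcal{F}_0\subsetneq\mathcal{F}_1\subsetneq\cdots$ that shows monomials of degree $\leq m^2-1$ (a fortiori $\leq m^2$) span the generated unital algebra, and the nondegeneracy of the trace pairing to force $[A,A']=0$ --- fit together exactly as needed. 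The contradiction then comes from the fact that a commuting family over $\bar{\mathbb{F}}$ has a common eigenvector, and the descent of the nilpotency hypothesis to the diagonal blocks (eigenvalues of a block-triangular matrix are those of its diagonal blocks, and $m^2\geq k^2$ for any block size $k<m$) closes the induction. Two minor remarks: the filtration argument in fact gives the slightly sharper bound $m^2-1$ on the degree, consistent with your aside; and algebraic closure is used twice, not once --- once in Burnside's theorem and again in producing the common eigenvector --- which is worth saying explicitly. Neither affects correctness.
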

%
{}Using results of Paz \cite{Paz} and Pappacena \cite{Pap}, the bound
on the degree $d$ of the monomials $p(x)$ in Theorem \ref{thm:McCoy}
can be improved to $d\leq\frac{m^{2}}{3}+1$ and $d\leq m\sqrt{\frac{2m^{2}}{m-1}+\frac{1}{4}}+\frac{m}{2}-1$,
respectively, each one being more efficient for smaller (resp. larger)
values of $m$.

To discuss simultaneous similarity, consider the notion of subsequence.

\begin{defn}
\label{def:subsequence}A \emph{subsequence} of a matrix sequence
$\mathcal{A}=(A_{1},...,A_{n})\in V_{m,n}$ is a matrix sequence of
the form \[
\mathcal{A}_{J}=(A_{j_{1}},...,A_{j_{l}})\in V_{m,l}\]
obtained from $\mathcal{A}$ by deleting some of its terms (here $J=(j_{1},...,j_{l})\in\{1,...,n\}^{l}$
for some natural number $l\leq n$, and the indices are strictly increasing:
$1\leq j_{1}<...<j_{l}\leq n$). 
\end{defn}
%
{}Let $\mathbb{A}=\mathbb{F}[A_{1},...,A_{n}]$ be the algebra generated
over $\mathbb{F}$ by the terms of $\mathcal{A}$ (of dimension $\leq m^{2}$).
Suppose we rearrange the terms of $\mathcal{A}$ such that $\mathbb{A}$
is in fact generated by just the first $k\leq m^{2}$ elements. Then,
for all $j>k$, $A_{j}=p_{j}(A_{1},...,A_{k})$ for some polynomial
$p_{j}$ with coefficients in $\mathbb{F}$. From this one can easily
obtain a similarity test.

\begin{fact}
Let $\mathcal{A},\mathcal{B}$ be two $m\times m$ matrix sequences
of the same finite or infinite length $n$. Then $\mathcal{A}$ and
$\mathcal{B}$ are similar if and only if \emph{all corresponding
subsequences} of length $\leq m^{2}$ are similar (that is, $\mathcal{A}_{J}\sim\mathcal{B}_{J}$
for all $J=(j_{1},...,j_{l})$ with $1\leq j_{1}<...<j_{l}\leq n$
and $l\leq m^{2}$). 
\end{fact}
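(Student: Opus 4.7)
The forward implication is immediate: if $g \cdot \mathcal{A} = \mathcal{B}$, then the same $g$ satisfies $g \cdot \mathcal{A}_J = \mathcal{B}_J$ for every index set $J$, so all corresponding subsequences are similar via this single element.

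For the converse I exploit the setup from the paragraph preceding the statement. Let $\mathbb{A} = \mathbb{F}[A_1, A_2, \ldots] \subseteq M_m(\mathbb{F})$ denote the unital subalgebra generated by the terms of $\mathcal{A}$. Since $\dim \mathbb{A} \leq m^2$, I select an index set $J = \{j_1 < \cdots < j_k\} \subseteq \{1, \ldots, n\}$ by walking through $A_1, A_2, \ldots$ in order and retaining each $A_j$ that strictly enlarges the algebra generated so far. This produces a strictly increasing chain
\begin{equation*}
\mathbb{F} \cdot I \;\subsetneq\; \mathbb{F}[A_{j_1}] \;\subsetneq\; \cdots \;\subsetneq\; \mathbb{F}[A_{j_1}, \ldots, A_{j_k}] \;=\; \mathbb{A}
\end{equation*}
inside the $m^2$-dimensional algebra $M_m(\mathbb{F})$, so $k + 1 \leq m^2$; in particular $k \leq m^2 - 1$ and $J$ is finite even when $n$ is infinite. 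For every $i \in \{1, \ldots, n\} \setminus J$, one may then write $A_i = p_i(A_{j_1}, \ldots, A_{j_k})$ for some noncommutative polynomial $p_i$ over $\mathbb{F}$.

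I now apply the hypothesis twice: once to $\mathcal{A}_J$, of length $k \leq m^2 - 1$, to obtain $g \in G$ with $g A_j g^{-1} = B_j$ for all $j \in J$; and once to $\mathcal{A}_{J \cup \{i\}}$, of length $\leq m^2$, for any fixed $i \notin J$, to obtain $h \in G$ (depending on $i$) with $h A_l h^{-1} = B_l$ for all $l \in J \cup \{i\}$. Conjugation is an algebra homomorphism, and both $g$ and $h$ send each $A_{j_r}$ to $B_{j_r}$, so
\begin{equation*}
B_i \;=\; h A_i h^{-1} \;=\; p_i(B_{j_1}, \ldots, B_{j_k}) \;=\; g A_i g^{-1},
\end{equation*}
where the middle equality uses the polynomial identity for $A_i$ together with the intertwining relations $h A_{j_r} h^{-1} = B_{j_r}$, and the outer equalities follow symmetrically for $g$. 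Hence $g$ conjugates every term of $\mathcal{A}$ to the corresponding term of $\mathcal{B}$, i.e.\ $\mathcal{A} \sim \mathcal{B}$.

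The only delicate point is securing $k \leq m^2 - 1$ (rather than the weaker $k \leq m^2$), which is exactly what keeps $\mathcal{A}_{J \cup \{i\}}$ within the allowed length $m^2$. This sharper bound comes from anchoring the dimension chain at $\mathbb{F} \cdot I$, i.e.\ from the unital convention $I \in \mathbb{A}$; without this extra step at the bottom, the argument would fail by exactly one.
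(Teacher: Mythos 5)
Your proof is correct and follows exactly the route the paper sketches in the paragraph preceding the Fact (generate the algebra $\mathbb{A}$ by a short greedy subsequence, express remaining terms as polynomials in those generators, then use conjugation-equivariance). You also correctly tighten the paper's stated bound $k\leq m^{2}$ to $k\leq m^{2}-1$ by anchoring the chain at $\mathbb{F}\cdot I$; that sharpening is in fact needed so that $J\cup\{i\}$ stays within length $m^{2}$, and its absence would leave a small gap in the argument as hinted in the paper.
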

One can improve this statement if we restrict to a big class of matrix
sequences. The Dubnov-Ivanov-Nagata-Higman \cite{DINH} theorem states
that, given $m\in\mathbb{N}$, there is a natural number $N(m)$ with
the following property: every associative algebra over $\mathbb{F}$
satisfying the identity $x^{m}=0$ is nilpotent of degree $N(m)$
(i.e, any product $x_{1}\cdots x_{k}$ of $k$ elements $x_{1},...,x_{k}$
of the algebra is zero, for $k\geq N(m)$). It is known that $\frac{m(m+1)}{2}\leq N(m)\leq\min\left\{ 2^{m}-1,m^{2}\right\} $
\cite{Ra}, and it was conjectured that $N(m)=\frac{m(m+1)}{2}$.
This was verified to be true for $m=2,3,4$.

The following remarkable result of  Procesi (\cite{Pr}) relates invariants
of matrices and nilpotency degrees of associative nil-algebras. It
provides explicit generators for the algebra of $G$-invariant regular
(i.e, polynomial) functions on $V_{m,n}$. For a multiindex $J=(j_{1},...,j_{k})\in\{1,...,n\}^{k}$
of \emph{length} $|J|=k$, define the $G$-invariant regular function
$t_{J}:V_{n}(\mathbb{F})\to\mathbb{F}$ as the trace of the word in
the terms of $\mathcal{A}\in V_{n}(\mathbb{F})$ dictated by $J$,
that is \begin{equation}
t_{J}(\mathcal{A}):=\tr(A_{j_{1}}...A_{j_{k}}).\label{eq:generators}\end{equation}

\begin{thm}
\label{thm:Procesi}\emph{{[}Procesi, \cite{Pr}]} Let $\mathbb{F}$
have characteristic $0$, and let $f:V_{m,n}\to\mathbb{F}$ be a $G$-invariant
polynomial function. Then $f$ is a polynomial in the set of functions
\[
\{t_{J}:|J|\leq N(m)\}\]
where $|J|$ denotes the length of the multiindex $J$.
\end{thm}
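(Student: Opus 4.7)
The plan is to combine Weyl's First Fundamental Theorem (FFT) for matrix invariants with the Dubnov--Ivanov--Nagata--Higman theorem cited in the paragraph preceding the statement. There are two main steps: first, show that traces $t_{J}$ of arbitrary length generate the invariant ring; second, reduce the required lengths to $|J|\le N(m)$.

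For the first step I would use polarization and restitution, which in characteristic $0$ reduce the general case to that of a multilinear $G$-invariant on $\mathrm{End}(\mathbb{F}^{m})^{\otimes k}$. By Schur--Weyl duality, the multilinear invariants of $GL_{m}$ acting by simultaneous conjugation on $\mathrm{End}(\mathbb{F}^{m})^{\otimes k}$ are spanned by the contractions associated to permutations $\sigma\in S_{k}$, where each cycle $(i_{1}\,i_{2}\,\cdots\,i_{l})$ of $\sigma$ contributes a factor $\tr(A_{i_{1}}A_{i_{2}}\cdots A_{i_{l}})$. Restituting, every $G$-invariant polynomial on $V_{m,n}$ is a polynomial in the $t_{J}$, with no a priori bound on $|J|$.

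For the second step I would push Cayley--Hamilton into the ring of invariants. Applied to each generic matrix $X_{i}$, Cayley--Hamilton expresses $X_{i}^{m}$ as a combination of $I,X_{i},\ldots,X_{i}^{m-1}$ with coefficients that are polynomials in $\tr(X_{i}),\ldots,\tr(X_{i}^{m})$. Polarizing these identities produces multilinear trace identities that rewrite any product $A_{i_{1}}\cdots A_{i_{k}}$ with $k\ge m$ as a linear combination of shorter products, with coefficients in the trace ring $T=\mathbb{F}[t_{J}]$. Passing to the quotient of the generic matrix algebra by the two-sided ideal generated by all its trace monomials, Cayley--Hamilton becomes the identity $x^{m}=0$, so by Dubnov--Ivanov--Nagata--Higman this quotient is nilpotent of degree $N(m)$. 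Lifting this nilpotency back, every trace $t_{J}$ with $|J|>N(m)$ is a polynomial in the traces $t_{J'}$ with $|J'|\le N(m)$, and combining with the FFT yields the claim.

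I expect the main obstacle to be the second step, specifically the bookkeeping required to transfer the abstract nilpotency in the trace-quotient algebra into a concrete length bound on generators of the invariant ring without enlarging the coefficient ring during the reductions. The FFT itself is classical, but the polarization step is precisely where the hypothesis $\mathrm{char}(\mathbb{F})=0$ is used; in positive characteristic the statement still holds but requires substantially more machinery (Donkin's extension), so it is natural that Procesi's original statement is restricted to this case.
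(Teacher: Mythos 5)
The paper does not prove this theorem; it is stated as a citation from Procesi \cite{Pr} and used as background, so there is no in-paper argument to compare against. Your sketch reproduces Procesi's original approach: first the FFT (traces of words generate the invariant ring, proved by polarization to the multilinear case and then Schur--Weyl duality, with cycles of $\sigma\in S_{k}$ giving trace monomials), and then the degree bound via Cayley--Hamilton and Dubnov--Ivanov--Nagata--Higman. The outline is correct. The one place that deserves to be made precise — and you correctly flag it as the bookkeeping issue — is the ``lifting'' in step two. The ideal $I$ of the generic-matrix-with-trace algebra generated by the central trace elements is graded, so a word $w$ of length $k>N(m)$, which lies in $I$ because the quotient algebra is nil of index $m$ and hence nilpotent of degree $N(m)$, admits a homogeneous decomposition $w=\sum_{i}u_{i}\,\tr(v_{i})\,w_{i}$ with $|u_{i}|+|v_{i}|+|w_{i}|=k$ and $|v_{i}|\ge1$. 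Taking traces gives $\tr(w)=\sum_{i}\tr(v_{i})\,\tr(u_{i}w_{i})$, where each factor now involves a word of length strictly less than $k$; induction on $k$ then reduces every $t_{J}$ with $|J|>N(m)$ to a polynomial in the $t_{J'}$ with $|J'|\le N(m)$, which together with the FFT gives the claim. With that point filled in, your proposal is a correct proof along the lines of the cited source.
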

Let us say that a matrix sequence $\mathcal{A}$ is \emph{semisimple}
if its $G$ orbit is Zariski closed in $V_{m,n}$. Semisimple sequences
form a dense subset of $V_{m,n}$. Noting that $N(2)=3$, Procesi's
theorem has the following corollary.

\begin{cor}
\label{cor:Procesi}Let $\mathcal{A},\mathcal{B}$ be two $2\times2$
semisimple matrix sequences of the same finite or infinite length.
Then $\mathcal{A}\sim\mathcal{B}$ if and only if \emph{all corresponding
subsequences} of length $\leq3$ are similar (that is, $\mathcal{A}_{J}\sim\mathcal{B}_{J}$
for all $J$ with length $\leq3$). 
\end{cor}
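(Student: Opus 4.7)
The plan is to invoke Procesi's theorem (Theorem~\ref{thm:Procesi}) to reduce the invariant theory of $V_{2,n}$ to traces of words of length at most $N(2)=3$, and then to use the standard Geometric Invariant Theory principle that in characteristic zero two semisimple orbits coincide as soon as all $G$-invariant polynomial functions agree on them. The forward direction is immediate: if $g\cdot\mathcal{A}=\mathcal{B}$ then $g\cdot\mathcal{A}_{J}=\mathcal{B}_{J}$ for every multiindex $J$.

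For the converse in the finite-length case, I would first observe that the hypothesis forces $t_{I}(\mathcal{A})=t_{I}(\mathcal{B})$ for every multiindex $I$ with $|I|\leq 3$: the trace $\tr(A_{i_{1}}A_{i_{2}}A_{i_{3}})$ involves at most three distinct letters, so it is determined by a subsequence of length at most $3$, and traces are similarity invariants. Procesi's theorem then says that these functions generate the whole algebra of $G$-invariant polynomials on $V_{2,n}$, so every $G$-invariant polynomial takes the same value on $\mathcal{A}$ and $\mathcal{B}$. Since both orbits are Zariski closed by the semisimplicity hypothesis, the GIT separation of closed orbits yields $\mathcal{A}\sim\mathcal{B}$.

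For the countably infinite case the idea is to reduce to the finite one and patch conjugators together by a Noetherian argument. The algebra $\mathbb{A}=\mathbb{F}[A_{1},A_{2},\ldots]\subseteq M_{2}(\mathbb{F})$ has dimension at most $4$, so there is some $N$ with $\mathbb{F}[A_{1},\ldots,A_{N}]=\mathbb{A}$; similarly for $\mathbb{F}[B_{1},B_{2},\ldots]$, and after enlarging $N$ both stabilize. For every $N'\geq N$ the truncation $(A_{1},\ldots,A_{N'})$ generates the same semisimple algebra $\mathbb{A}$ and therefore has Zariski closed orbit in $V_{2,N'}$, and analogously for $\mathcal{B}$, so the finite case produces a conjugator. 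Define
\[
G_{N'}:=\{g\in G:\,gA_{j}g^{-1}=B_{j}\text{ for all }j\leq N'\},
\]
a non-empty Zariski closed subset of the affine variety $G$. The chain $G_{N}\supseteq G_{N+1}\supseteq\cdots$ stabilizes by Noetherianity of the Zariski topology, so any $g$ in the (non-empty) intersection $\bigcap_{N'\geq N}G_{N'}$ conjugates $A_{j}$ to $B_{j}$ for every $j$, realizing $\mathcal{A}\sim\mathcal{B}$.

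The step I expect to require the most care is precisely the transfer of semisimplicity from the infinite sequence to its finite truncations: a generic truncation of a semisimple sequence need not be semisimple (take a pair generating $M_{2}(\mathbb{F})$ whose first term is a nilpotent Jordan block), and the argument really depends on the observation that once the generated algebra has stabilized the truncated orbit becomes closed. Modulo that point, the rest is a clean combination of Procesi's theorem, GIT separation of closed orbits in characteristic zero, and the Noetherian property of affine varieties.
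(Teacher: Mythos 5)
Your finite-length argument is exactly the route the paper intends (the paper gives no explicit proof, just the remark that $N(2)=3$ before stating the corollary): read $t_{I}(\mathcal{A})=t_{I}(\mathcal{B})$ for all $|I|\leq 3$ off the hypothesis, noting as you do that a word of length three in possibly repeated letters only sees a strictly increasing subsequence of length $\leq 3$; then Procesi's theorem (Theorem~\ref{thm:Procesi}) gives agreement of all $G$-invariant polynomials, and GIT separation of closed orbits in characteristic zero finishes. Where you add genuine content is the infinite case, which the statement covers but the paper does not elaborate. That part of your argument is also sound: once the generated subalgebra (of dimension $\leq 4$) has stabilized, the truncation generates the same semisimple algebra, and for $2\times 2$ tuples this is equivalent to having a closed orbit (Artin's criterion; compare Proposition~\ref{Prop:Artin}), so the finite case applies to every truncation past that point. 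The Noetherian descending-chain argument on the closed sets $G_{N'}\subseteq G$ then patches the conjugators into a single $g$. The worry you raise yourself -- that an arbitrary truncation of a semisimple sequence need not be semisimple -- is real, and your resolution (wait for the generated algebra to stabilize) is the correct fix. Overall: same idea as the paper for finite $n$, plus a correct and explicit completion of the infinite case.
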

This corollary gives a numerical criterion for simultaneous similarity
of semisimple sequences. In the case $m=2$, the number of generators
can be further reduced, and moreover, Drensky described all relations
in terms of a generating set (\cite{D}). He states his main theorem
for traceless matrices, but an easy modification yields:

\begin{thm}
\emph{\label{thm:Drensky}{[}Drensky, \cite{D}]} The generators of
the algebra (over $\mathbb{F}$ of characteristic 0) of $G$-invariant
polynomial functions on $V_{2,n}$ are given by:\[
t_{j},\ t_{jj},\ u_{jk},\ s_{jkl}\quad\quad1\leq j<k<l\leq n,\]
where $u_{jk}:=2t_{jk}-t_{j}t_{k}$, $s_{jkl}:=t_{jkl}-t_{lkj}$ and
$t_{j},t_{jk},t_{jkl}$ are as in (\ref{eq:generators}). A full set
of relations is\begin{equation}
s_{abc}s_{def}+\frac{1}{4}\left|\begin{array}{ccc}
u_{ad} & u_{ae} & u_{af}\\
u_{bd} & u_{be} & u_{bf}\\
u_{cd} & u_{ce} & u_{cf}\end{array}\right|=0,\quad\quad\quad u_{ea}s_{bcd}-u_{eb}s_{acd}+u_{ec}s_{abd}-u_{ed}s_{abc}=0,\label{eq:relations}\end{equation}
for all appropriate indices.
\end{thm}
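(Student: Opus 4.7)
The plan is to invoke Procesi's theorem (Theorem \ref{thm:Procesi}), which for $m=2$ gives $\{t_J : |J|\le N(2)=3\}$ as a generating set, and then whittle this down to the stated list by exploiting the low-dimensional linear algebra of $2\times 2$ matrices. A useful preparatory step is to pass to the traceless parts $B_j := A_j - \tfrac{1}{2} t_j I$: a direct check shows that $u_{jk} = 2\tr(B_j B_k)$ and $s_{jkl} = \tr(B_j[B_k,B_l])$, so $u$ and $s$ are really invariants of $\mathfrak{sl}_2$-tuples. This reframes the question in terms of the $3$-dimensional Lie algebra $\mathfrak{sl}_2$ equipped with its non-degenerate bilinear form $\langle X,Y\rangle := \tr(XY)$ and the totally antisymmetric form $\tr(X[Y,Z])$.

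\emph{Generation.} For $|J|\le 2$ we have $t_j$, $t_{jj}$, and $t_{jk}=\tfrac12(u_{jk}+t_jt_k)$ for $j<k$. For $|J|=3$, any trace of a word with a repeated index shortens by Cayley--Hamilton, $A^2=\tr(A)A-\det(A)I$, with $\det(A)=\tfrac12(\tr(A)^2-\tr(A^2))$ already in our list. For three distinct indices, polarizing Cayley--Hamilton yields
\[ BC + CB = \tr(B)\,C + \tr(C)\,B + (\tr(BC)-\tr(B)\tr(C))\,I; \]
multiplying by $A$ and taking the trace gives the symmetric identity $t_{jkl}+t_{lkj}=t_j t_{kl}+t_k t_{jl}+t_l t_{jk}-t_j t_k t_l$, which combined with $s_{jkl}=t_{jkl}-t_{lkj}$ expresses $t_{jkl}$ as a polynomial in $t_j$, $u_{jk}$, $s_{jkl}$.

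\emph{Verification of the two relations.} Fix a basis of $\mathfrak{sl}_2$ orthonormal for $\langle\cdot,\cdot\rangle$ (available after scalar extension to $\bar{\mathbb F}$), and let $\mathbf{b}_j\in\mathbb F^3$ denote the coordinates of $B_j$. Then $u_{jk}$ is proportional to the inner product $\mathbf{b}_j\cdot\mathbf{b}_k$, while $s_{jkl}$ is proportional to the determinant $\det(\mathbf b_j,\mathbf b_k,\mathbf b_l)$. The first relation is then the Cauchy--Binet identity equating the product of two $3\times 3$ determinants with the $3\times 3$ determinant of inner products. The second expresses the standard alternating linear dependence of any four vectors in a $3$-dimensional space, paired with $\mathbf b_e$ via the inner product.

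\emph{Completeness.} This is the principal obstacle. Using the first relation to eliminate every product $s\cdot s$ and the second to rewrite each $u_{\bullet e}\,s_{\bullet\bullet\bullet}$ in a fixed basis of triples, one reduces every invariant to the canonical form $P+\sum_{j<k<l}Q_{jkl}\,s_{jkl}$ with $P,Q_{jkl}\in\mathbb F[t_j,t_{jj},u_{jk}]$. The involution of $V_{2,n}$ sending $A_j \mapsto t_j I - A_j$ fixes $t_j$, $t_{jj}$, $u_{jk}$, and $Q_{jkl}$ while negating every $s_{jkl}$, so any vanishing canonical expression splits into $P=0$ and $\sum Q_{jkl}s_{jkl}=0$; the remaining task, showing that such a combination vanishes only when each $Q_{jkl}=0$, is the delicate point. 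It can be carried out either by a specialization argument on generic $\mathfrak{sl}_2$-tuples (the $s_{jkl}$ are algebraically independent once enough matrices are included) or, more cleanly, by matching the Hilbert series of the invariant ring (via the Weyl integration formula for $\mathrm{SL}_2$) with that of the algebra presented by the stated generators and relations. This final counting step is the hard part; the preceding steps are direct.
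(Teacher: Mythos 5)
The paper does not prove this theorem at all: it is Drensky's result, attributed to \cite{D}, and the text only remarks that Drensky states it for traceless matrices and that ``an easy modification yields'' the form quoted. There is therefore no in-paper proof to compare your argument against; what you have written is an attempt at an independent proof of a nontrivial cited theorem.

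Your sketch organizes the right pieces. Passing to $B_j=A_j-\tfrac12 t_j I$ and checking $u_{jk}=2\tr(B_jB_k)$, $s_{jkl}=\tr(B_j[B_k,B_l])$ is correct, and so is the involution $A_j\mapsto t_jI-A_j$ that fixes $t_j,t_{jj},u_{jk}$ and negates $s_{jkl}$ (that is exactly the reduction to traceless matrices the paper alludes to). The generation argument via Procesi plus the polarized Cayley--Hamilton identity $t_{jkl}+t_{lkj}=t_jt_{kl}+t_kt_{jl}+t_lt_{jk}-t_jt_kt_l$ is sound, and identifying $\mathfrak{sl}_2$ with $\mathbb F^3$ carrying the trace form does make the first relation a Gram/Cauchy--Binet identity and the second a Laplace-type expansion of a degenerate $4\times4$ determinant (you would still need to pin down the scalar factors to get the exact coefficient $\tfrac14$).

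The genuine gap is the completeness of the two families of relations, which you yourself flag. You reduce to showing that $\sum_{j<k<l}Q_{jkl}\,s_{jkl}=0$ with $Q_{jkl}\in\mathbb F[t_j,t_{jj},u_{jk}]$ forces each $Q_{jkl}=0$ modulo the stated relations, but neither suggested route is carried through: the specialization argument needs care because the $s_{jkl}$ are \emph{not} algebraically independent over $\mathbb F[u_{jk}]$ once $n\ge 4$ (the second family of relations is precisely such a dependence), so one must argue modulo the ideal they generate rather than via generic independence; and the Hilbert-series comparison requires computing both sides, which is essentially the content of Drensky's paper, not a shortcut around it. As it stands, the proposal verifies generation and the validity of the relations but does not establish that the listed relations generate the full ideal, which is the substantive part of the theorem.
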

In this article, we show that these same generators can be used to
get even more explicit solutions to problems (i)-(iii) in the case
$m=2$. Our methods are mostly elementary and their generalization
to higher $m$ seems possible, although computationally very involved.%
{}

\subsection{Statement of the main results\label{sub:Statement}}

From now on, we restrict to the space $V_{n}=V_{2,n}$ of sequences
of $2\times2$ matrices of length $n\in\mathbb{N}\cup\left\{ \infty\right\} $,
i.e, the case $m=2$, except where explicitly stated.

The article can be roughly divided into two parts. In the first part,
Section \ref{sec:Triangularization}, we work with the space $V_{n}(R)$
of $2\times2$ matrix sequences with coefficients in an integral domain
$R$, on which the group $G=GL_{2}(R)$ of invertible matrices over
$R$ acts by conjugation. Here, we define and study reduced sequences,
consider the triangularization problem, and prove Theorems \ref{thm:Flo}
to \ref{thm:efic}, stated below. They provide an efficient numerical
criterion for triangularization of matrix sequences in $V_{n}(R)$. 

The case $m=2$ is simple enough that some easy arguments already
improve some of the statements above, even in the more general case
of $2\times2$ matrices over integral domains. 

\begin{prop}
\label{pro:leq3}$\mathcal{A}\in V_{n}(R)$ is triangularizable if
and only if every subsequence of length $\leq3$ is triangularizable.
\end{prop}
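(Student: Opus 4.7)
The forward implication is immediate: the basis that triangularizes $\mathcal{A}$ also triangularizes every subsequence. For the converse, the plan is to reformulate triangularization as the existence of a common invariant line (spanned by a unimodular eigenvector) and then exploit the fact that a non-scalar $2\times 2$ matrix has very few such lines.

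Define, for a $2\times 2$ matrix $A$ over $R$, the set $\mathcal{L}(A)$ of submodules $Rv\subset R^{2}$ where $v\in R^{2}$ is unimodular (its two coordinates generate $R$) and $Av=\lambda v$ for some $\lambda\in R$. A triangularization of $A$ by some $g\in GL_{2}(R)$ is equivalent to exhibiting such a $v$ as the first column of $g^{-1}$, and conversely any unimodular $v$ may be completed to a basis of $R^{2}$ via a Bezout identity. Hence $\mathcal{A}$ is triangularizable if and only if $\bigcap_{j}\mathcal{L}(A_{j})\neq\emptyset$. The key lemma I would prove is: if $A$ is not scalar, then $|\mathcal{L}(A)|\leq 2$. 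Passing to the fraction field $K$ of $R$, each $Rv\in\mathcal{L}(A)$ gives an eigenline $Kv\subset K^{2}$, and the assignment $Rv\mapsto Kv$ is injective because two unimodular vectors spanning the same $K$-line must be unit multiples in $R$. A non-scalar matrix in $M_{2}(K)$ has at most two eigenlines in $K^{2}$ (three independent eigenvectors in $K^{2}$ would force two with a common eigenvalue and hence $A$ scalar), so the bound follows.

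With the lemma in place, the argument is a short case analysis. If every $A_{j}$ is scalar, any line is invariant under every $A_{j}$ and $\mathcal{A}$ is triangularizable. Otherwise, after relabeling assume $A_{1}$ is non-scalar; applying the hypothesis to the length-one subsequence $(A_{1})$ gives $\mathcal{L}(A_{1})\neq\emptyset$, so $|\mathcal{L}(A_{1})|\in\{1,2\}$. If $\mathcal{L}(A_{1})=\{L\}$, then triangularizability of each pair $(A_{1},A_{j})$ forces $L\in\mathcal{L}(A_{j})$, and $L$ is a common invariant line. If $\mathcal{L}(A_{1})=\{L,L'\}$, set $S_{L}:=\{j:L\in\mathcal{L}(A_{j})\}$ and $S_{L'}:=\{j:L'\in\mathcal{L}(A_{j})\}$; pairwise triangularizability gives that $S_{L}\cup S_{L'}$ is the full index set. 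If $S_{L}$ or $S_{L'}$ is already everything, we are done. Otherwise pick $i\in S_{L}\setminus S_{L'}$ and $k\in S_{L'}\setminus S_{L}$; triangularizability of the triple $(A_{1},A_{i},A_{k})$ produces a common invariant line in $\mathcal{L}(A_{1})=\{L,L'\}$, contradicting $L\notin\mathcal{L}(A_{k})$ and $L'\notin\mathcal{L}(A_{i})$.

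The one point requiring care is the reformulation itself: over an integral domain one must distinguish arbitrary rank-one submodules from the free direct-summand ones relevant to $GL_{2}(R)$, and verify that $Rv\mapsto Kv$ really is well-defined and injective. Once this is settled, the combinatorial dichotomy above uses nothing about $R$ beyond its being an integral domain and applies uniformly to finite and countably infinite sequences.
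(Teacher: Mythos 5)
Your proof is correct and takes a genuinely different route from the paper's. The paper separates the commutative case (Proposition \ref{cor:comm}) from the non-commutative one; for the latter it fixes a non-commuting pair $(A_1,A_2)$ in upper triangular form and argues that for each further $k$ the algebra generated by $(A_1,A_2,A_k)$ over the fraction field coincides with the three-dimensional algebra generated by $(A_1,A_2)$ --- dimension $\leq 2$ would make the triple commute, dimension $4$ would make it non-triangularizable --- so $A_k$ is a polynomial in $A_1,A_2$ and hence already upper triangular. Your argument instead recasts triangularizability over $R$ as the existence of a common $A_j$-invariant free rank-one direct summand $Rv$ with $v$ unimodular, proves that a non-scalar matrix admits at most two such lines by injecting $Rv\mapsto Kv$ into eigenlines over the fraction field $K$, and closes with a pigeonhole argument on the index sets $S_L$ and $S_{L'}$. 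Both proofs pass to $K$ and exploit the rigidity of a non-scalar $2\times 2$ matrix, but your line-counting lemma replaces the algebra-dimension count, absorbs the commutative and non-commutative cases into a single analysis governed by $|\mathcal{L}(A_1)|\in\{1,2\}$, and makes it transparent why $3$ is the right threshold: one index fixes the two candidate lines and two more suffice to force a contradiction when neither line works globally. The injectivity of $Rv\mapsto Kv$ that you flagged does hold and closes the argument: a Bezout identity $px+qy=1$ for $v=(x,y)$ gives $c=px'+qy'\in R$ with $cv=w$ whenever $Kv=Kw$, and the symmetric relation makes $c$ a unit, so $Rv=Rw$.
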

For simultaneous similarity, a simple argument generalizes and improves
Corollary \ref{cor:Procesi}, in the case $m=2$, to account for all
matrix sequences (not necessarily semisimple).

\begin{prop}
\label{pro:sim3}Over an integral domain $\mathcal{A}\sim\mathcal{B}$
if and only if $\mathcal{A}_{J}\sim\mathcal{B}_{J}$ for all $J$
of length $\leq3$. Moreover, under the generic condition $[A_{1},A_{2}]\neq0$,
$\mathcal{A}$ is similar to $\mathcal{B}$ if and only if $(A_{1},A_{2},A_{j})\sim(B_{1},B_{2},B_{j})$,
for all $j=3,...,n$.
\end{prop}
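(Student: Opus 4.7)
\smallskip
\noindent\textbf{Proof sketch.} Necessity in both statements is immediate by restriction to subsequences; for sufficiency I would prove the \emph{generic} second assertion first, since it isolates the essential mechanism and then feeds back into the first. The linchpin is an elementary fact over $K:=\mathrm{Frac}(R)$: if $X,Y\in M_2(K)$ satisfy $[X,Y]\neq 0$, then their simultaneous centralizer in $M_2(K)$ reduces to $K\cdot I$. Indeed, $X$ cannot be scalar, so any $M$ commuting with $X$ lies in $K[X]=KI+KX$; writing $M=\alpha I+\beta X$, the relation $[M,Y]=\beta[X,Y]=0$ forces $\beta=0$.

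For the generic statement, suppose $[A_1,A_2]\neq 0$. The $j=3$ hypothesis implies $(A_1,A_2)\sim(B_1,B_2)$, so I fix once and for all a single $g\in GL_2(R)$ with $gA_ig^{-1}=B_i$ for $i=1,2$. For each $j\geq 3$ pick a conjugator $g_j\in GL_2(R)$ realizing $(A_1,A_2,A_j)\sim(B_1,B_2,B_j)$, and set $h_j:=g^{-1}g_j$. Then $h_j$ centralizes both $A_1$ and $A_2$, so by the linchpin $h_j=\lambda_j I$ for some $\lambda_j\in K$; but $h_j\in M_2(R)$ forces $\lambda_j\in R$, and $h_j\in GL_2(R)$ forces $\lambda_j\in R^{\times}$. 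Consequently $g_j=\lambda_j g$, giving $B_j=g_jA_jg_j^{-1}=gA_jg^{-1}$, so the single $g$ conjugates $\mathcal{A}$ onto $\mathcal{B}$.

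For the first statement, if some pair of terms of $\mathcal{A}$ fails to commute I reindex that pair into positions $1,2$ and invoke the generic part (whose hypotheses are supplied by the length-$\leq 3$ data). Otherwise $\mathcal{A}$ is commutative; I use the length-$1$ similarity $(A_1)\sim(B_1)$ to reduce to $A_1=B_1$. If every $A_j$ is a scalar matrix then conjugation fixes each $A_j$, and $B_j=A_j$ follows directly from the length-$1$ similarities. Otherwise I reindex so that $A_1=B_1$ is non-scalar: any length-$2$ conjugator $g_j$ for $(A_1,A_j)\sim(A_1,B_j)$ centralizes $A_1$ and hence lies in $K[A_1]$, while $A_j$ lies in $K[A_1]$ by commutativity, so $g_j$ and $A_j$ both sit in the commutative subalgebra $K[A_1]$; therefore $B_j=g_jA_jg_j^{-1}=A_j$.

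The main point to handle with care is the passage between $M_2(R)$ and $M_2(K)$: centralizers in $M_2(R)$ can be strictly larger than $R[A_1]$, so I cannot argue inside $R$ directly. The remedy is that once the scalar is detected in $K$, containment in $M_2(R)$ drags it back into $R$, and the final identities $B_j=gA_jg^{-1}$ live in $M_2(R)$ because the initial $g$ was chosen there.
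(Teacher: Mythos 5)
Your proof is correct and follows essentially the same route as the paper: the same split into commutative and non-commutative cases, the same reduction via nonderogatoriness in the commutative case, and the same key fact (the simultaneous stabilizer of a non-commuting pair is scalar, the paper's Lemma~\ref{lem:stabilizer}) in the generic case. The only deviation is cosmetic: you prove the stabilizer lemma structurally via $K[X]=KI+KX$ and then pull the scalar back into $R$, whereas the paper computes the nullspaces of the $3\times 3$ matrices $M_j$ directly over $R$, and you keep $g$ and $g_j$ separate rather than normalizing $(A_1,A_2)=(B_1,B_2)$ at the outset.
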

In other words, the conjugation action of $GL_{2}(R)$ on $2\times2$
matrix sequences of any length is completely determined by the same
action on triples of $2\times2$ matrices. These two propositions
seem to be standard, and I thank R. Guralnick for furnishing simple
arguments leading to their proofs, included below for convenience
and completeness.

The statement of Proposition \ref{pro:leq3} is the best possible
in this generality, as there are $2\times2$ matrix sequences of length
$3$ (therefore, \emph{a fortiori} for every size $m\times m$, $m\geq2$
and every length $n\geq3$) that are not triangularizable but are
pairwise triangularizable (see Example \ref{exa:example} below).
See however Theorem \ref{thm:leq2} below.

Let us now consider the problem of finding a numerical criterion for
triangularization of general $2\times2$ sequences. By Proposition
\ref{pro:leq3}, we just need to consider \emph{triples} of $2\times2$
matrices. Using only the $G$-invariant functions given by the trace
and the determinant, the following is a simple numerical criterion.

\begin{thm}
\label{thm:Flo}A $2\times2$ matrix sequence $\mathcal{A}$ (over
$R$) of length $n$ is triangularizable if and only if all its terms
are triangularizable and \begin{equation}
\de(AB-BA)=\mathsf{tr}(ABC-CBA)=0\label{eq:Flo}\end{equation}
for all terms $A,B,C$ of $\mathcal{A}$. In particular, a pair $(A,B)\in V_{2}(R)$
is triangularizable if and only if both $A$ and $B$ are triangularizable
and $\de(AB-BA)=0$.
\end{thm}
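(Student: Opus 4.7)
My plan is to treat necessity and sufficiency separately, using Proposition~\ref{pro:leq3} to reduce sufficiency to sequences of length at most three. Necessity is a direct calculation: if $\mathcal{A}$ is $R$-similar to an upper-triangular sequence, then for any two terms the commutator is strictly upper triangular, since the diagonals of $AB$ and $BA$ are both the term-by-term product of the diagonals of $A,B$ in the commutative ring $R$; hence $\de[A,B]=0$. Likewise the diagonals of $ABC$ and $CBA$ agree, so $\tr(ABC-CBA)=0$. Both quantities are conjugation invariants and therefore remain zero in the original basis.

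For sufficiency in the pair case, conjugate so that $A=\left(\begin{smallmatrix}\lambda&\mu\\0&\nu\end{smallmatrix}\right)$ (possible since $A$ is individually triangularizable over $R$), and write $B=\left(\begin{smallmatrix}a&b\\c&d\end{smallmatrix}\right)$. A direct computation gives
\[
[A,B]=\begin{pmatrix}\mu c & b(\lambda-\nu)+\mu(d-a)\\ c(\nu-\lambda) & -\mu c\end{pmatrix},\qquad
\de[A,B]=-c\bigl(c\mu^{2}-\mu(\lambda-\nu)(d-a)-b(\lambda-\nu)^{2}\bigr).
\]
In the integral domain $R$, $\de[A,B]=0$ forces $c=0$ (so $B$ is already upper triangular in the same basis, and we are done) or the bracket to vanish; the latter is precisely the condition that the vector $(-\mu,\lambda-\nu)$, which spans the ``second'' eigenline of $A$, be a $B$-eigenvector. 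A primitive generator of this line, extended to a basis of $R^{2}$, simultaneously triangularizes $(A,B)$. The subcase $\lambda=\nu$ is easy: scalar $A$ is trivial, and Jordan-block $A$ gives $\de[A,B]=-\mu^{2}c^{2}$, forcing $c=0$.

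For the triple case, the pair case applied to $(A,B)$ and $(A,C)$ shows that in an $A$-adapted basis each of $B,C$ preserves one of the eigenlines of $A$. If they preserve the same one we are done; otherwise we are in a ``crossed'' configuration, to which I would apply the identity
\[
\tr(ABC-CBA)=(\lambda-\nu)\bigl(b_{12}c_{21}-b_{21}c_{12}\bigr),
\]
clean after further diagonalizing $A$ over $\mathrm{Frac}(R)$ when $\lambda\neq\nu$. In the crossed configuration one of the two parenthetical terms vanishes automatically; the hypothesis $\tr(ABC-CBA)=0$ together with $\lambda\neq\nu$ then forces the remaining term to vanish, by the integral-domain property. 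This collapses the cross to a non-crossed case, producing a common eigenvector. The repeated-eigenvalue subcases are handled separately: a scalar term commutes with everything and reduces to the pair $(B,C)$; a Jordan-block term has a unique eigenline that both $B$ and $C$ must share via the pair argument.

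The main obstacle I anticipate is not the field-theoretic content---which is essentially McCoy's criterion specialized to $2\times 2$ matrices, with the trace condition appearing as his ``monomial of degree one'' case---but rather descending the common eigenline from $\mathrm{Frac}(R)$ back to $R$: the line must admit a primitive generator extending to an $R$-basis of $R^{2}$, so that the simultaneous triangularization can be realized by an element of $GL_{2}(R)$ rather than merely of $GL_{2}(\mathrm{Frac}(R))$. The hypothesis that each term of $\mathcal{A}$ is individually $R$-triangularizable supplies precisely this primitivity.
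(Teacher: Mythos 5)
Your necessity argument is correct and matches Proposition~\ref{pro:sigma=00003D3D0}, and your overall sufficiency strategy (reduce to triples via Proposition~\ref{pro:leq3}, treat the pair case by a direct commutator computation, then handle the triple case) is the same as the paper's route through Theorem~\ref{thm: pair} and Theorem~\ref{thm:SigmaDelta}. The identity $\tr(ABC-CBA)=(\lambda-\nu)(b_{12}c_{21}-b_{21}c_{12})$ and the case analysis over $\mathrm{Frac}(R)$ are fine, and this is indeed the two-by-two specialization of McCoy, as you note. Where the proof genuinely breaks down is the last paragraph. You assert that ``the hypothesis that each term of $\mathcal{A}$ is individually $R$-triangularizable supplies precisely this primitivity'' of the common eigenline. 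That implication is not justified and is in fact the entire content of the hard part of the pair case. By Proposition~\ref{pro:principal}, individual triangularizability of $A$ gives a primitive eigenvector for \emph{some} eigenline of $A$, not for \emph{every} eigenline; the line spanned by $(-\mu,\lambda-\nu)$ is the ``other'' eigenline of $A$ in your chosen upper-triangular basis, and there is no a priori reason it should contain a vector $(x,y)\in R^2$ with $xR+yR=R$. The same remark applies to $B$: $B$ being triangularizable gives a primitive eigenvector for one of its eigenlines, but the common eigenline with $A$ may well be the other one. So the descent from $\mathrm{Frac}(R)$ to $R$, which you correctly flag as ``the main obstacle,'' is left entirely open.

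The paper's proof of Theorem~\ref{thm: pair} confronts this head-on: it identifies $(b_1,-e_1)$ as a nontrivial zero of the quadratic form $Q_2$ attached to $A_2$, invokes Lemma~\ref{lem:eigenvector} to exhibit the second eigenline of $A_2$, and then uses Proposition~\ref{pro:principal} to argue that one of the two eigenlines of $A_2$ admits a primitive generator, splitting into subcases accordingly (and handling the degenerate case $\delta_{A_2}=0$ separately). Your proof sketch contains none of this case distinction and simply asserts the conclusion in one sentence. To make the argument complete you would at minimum need to (a) explain how a primitive generator of the common eigenline is produced in $R^2$, and (b) address what happens if the primitive eigenvector guaranteed by Proposition~\ref{pro:principal} for $B$ lies on $B$'s \emph{other} eigenline rather than on the common one. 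Without that, the reduction from $\mathrm{Frac}(R)$ to $R$ is a gap, not a remark.
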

\begin{rem*}
Over an algebraically closed field $\bar{\mathbb{F}}$, every single
matrix is triangularizable. Suppose $\mathcal{A}$ is a $2\times2$
matrix sequence over $\bar{\mathbb{F}}$ with $[A,B]$ and $C[A,B]$
nilpotent for all terms $A,B,C$ of $\mathcal{A}$. Then, Equation
(\ref{eq:Flo}) holds and Theorem \ref{thm:Flo} shows that all mentioned
bounds in Theorem \ref{thm:McCoy} can be improved for $m=2$, as
the condition that $p$ is a monomial of degree $\leq1$ is already
sufficient (and necessary) for triangularization.
\end{rem*}
Theorem \ref{thm:Flo} generalizes a result proved in \cite{Fl} for
algebraically closed fields. Observe that the case $n=2$ of this
theorem is a direct generalization to integral domains of a well-known
criteria, obtained in \cite{Fr}, for a pair of $2\times2$ matrices
over $\bar{\mathbb{F}}$ to be triangularizable. 

As a consequence of our study of invariants, we can improve Proposition
\ref{pro:leq3} under a simple non-degeneracy condition on $\mathcal{A}$.
Let us define a \emph{reduced} sequence to be one with no commuting
pairs among its terms. 

\begin{thm}
\label{thm:leq2}A $2\times2$ reduced matrix sequence $\mathcal{A}$
of length $\geq4$ is triangularizable (over $R$) if and only if
all subsequences of $\mathcal{A}$ of length $\leq2$ are triangularizable
(over $R$).
\end{thm}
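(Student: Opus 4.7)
The forward direction is immediate, since any subsequence of a triangularizable sequence is triangularizable. For the converse, the plan is to reduce, via Theorem \ref{thm:Flo}, to showing that $\tr(A_pA_qA_r - A_rA_qA_p)=0$ for every triple of terms. The length-$\leq 2$ hypothesis gives, through Theorem \ref{thm:Flo}, that each $A_p$ is triangularizable and $\de[A_p,A_q]=0$ for every pair. Since the remaining triple condition is a polynomial identity in $R$, I will verify it after base change to the algebraic closure $\bar K$ of $\mathrm{Frac}(R)$; over $\bar K$ the condition is implied by simultaneous triangularizability of the triple $(A_p,A_q,A_r)$. It therefore suffices to show that every triple of terms is triangularizable over $\bar K$.

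Next I will translate pairwise triangularizability into the language of common eigenlines. Over $\bar K$, each pair still satisfies the hypotheses of Theorem \ref{thm:Flo}, so shares an eigenvector. Reducedness, which passes to $\bar K$, forbids any $A_p$ from being scalar and forbids any pair from sharing two independent eigenlines (they would then be simultaneously diagonalizable and commute). So each pair $(A_p,A_q)$ has a unique common eigenline $v_{pq}\in\mathbb{P}^1(\bar K)$. Assume for contradiction that a triple $(A_i,A_j,A_k)$ fails to be triangularizable over $\bar K$. A short case analysis (if some term in the triple has a unique eigenline, it is common to the whole triple; if $v_{ij}=v_{ik}$, likewise; and analogous coincidences for $v_{jk}$ are ruled out by reducedness of a pair with $A_i$) forces the rigid configuration in which each of the three terms has two distinct eigenlines, arranged as $A_i\leftrightarrow\{v_i^+,v_i^-\}$, $A_j\leftrightarrow\{v_i^+,v\}$, $A_k\leftrightarrow\{v_i^-,v\}$ for three distinct lines $v_i^+,v_i^-,v\in\mathbb{P}^1(\bar K)$.

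The main obstacle, and the step for which the length-$\geq 4$ hypothesis is tailor-made, will be to rule out this rigid configuration using a fourth term $A_l$. Its eigenlines must meet each of the three sets $\{v_i^+,v_i^-\}$, $\{v_i^+,v\}$, $\{v_i^-,v\}$ in \emph{exactly one} element: nonempty by pairwise triangularizability of $A_l$ with each of $A_i,A_j,A_k$, and not of size two by reducedness. If $A_l$ has a single eigenline $w$, then $w$ would need to lie in all three of these pairs, whose triple intersection is empty. If $A_l$ has two eigenlines $\{w^+,w^-\}$, the set cannot coincide with any of the three pairs (else $A_l$ commutes with the corresponding term), so it contains at most one of $v_i^+,v_i^-,v$; a brief case check shows that in each remaining subcase at least one of the three intersections has size $0$ or $2$, giving the desired contradiction. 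Hence every triple is triangularizable over $\bar K$, and Theorem \ref{thm:Flo} yields triangularizability of $\mathcal{A}$ over $R$.
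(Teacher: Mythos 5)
Your proof is correct, and it takes a genuinely different route from the paper's. The paper proves this theorem by first establishing Proposition~\ref{pro:quadrup} (reduced, pairwise-triangularizable quadruples are triangularizable) via an explicit coordinate computation: it puts $A_1$ diagonal (using Lemma~\ref{lem:red-non-ss}) and $A_2$ upper triangular, unfolds the five $\sigma_{jk}=0$ conditions into polynomial equations in the entries, and extracts a $2\times 2$ linear system whose nontrivial kernel forces $e_3c_4-c_3e_4=0$ and hence a commuting pair, contradicting reducedness. Your argument instead descends to the algebraic closure $\bar K$ of $\operatorname{Frac}(R)$, reinterprets pairwise triangularizability as the existence of exactly one common eigenline $v_{pq}\in\mathbb{P}^1(\bar K)$ (one by triangularizability, at most one by reducedness), shows that a non-triangularizable triple would force the unique ``pairwise-but-not-globally-compatible'' eigenline configuration $\{v_i^+,v_i^-\},\ \{v_i^+,v\},\ \{v_i^-,v\}$ on three distinct lines, and then rules this out by the simple combinatorial fact that no fourth matrix can meet all three pairs in exactly one element each. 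The return to $R$ is legitimate because the quantity $\tr(ABC-CBA)$ you need to kill lives in $R\hookrightarrow\bar K$, so its vanishing over $\bar K$ gives vanishing over $R$, after which Theorem~\ref{thm:Flo} applies. Your proof is cleaner conceptually and makes transparent exactly why length $\geq 4$ is needed; the paper's proof stays entirely inside $R$ and is more self-contained in that it never invokes algebraic closure, at the cost of an opaque calculation. One minor remark: in your last case analysis the leftover subcases all give intersection size $0$, never $2$ (size $2$ was already excluded when you ruled out $\{w^+,w^-\}$ coinciding with any $S_m$), so the phrasing ``size $0$ or $2$'' is correct but slightly looser than what actually occurs.
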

With a little more care we get a test, computationally much more efficient,
whose complexity grows \emph{only linearly} with the number $n$ of
matrices in $\mathcal{A}$. Let us define the reduced length of a
sequence $\mathcal{A}$ to be the biggest length of a reduced subsequence
$\mathcal{B}\subset\mathcal{A}$.

\begin{thm}
\label{thm:efic}Let $\mathcal{A}=(A_{1},...,A_{n})\in V_{n}$ have
reduced length $l\le n$ and rearrange its terms so that $\mathcal{A}'=(A_{1},...,A_{l})$
is reduced. Then

(i) In the case $l\leq3$, $\mathcal{A}$ is triangularizable if and
only if $\mathcal{A}'$ is triangularizable.

(ii) In the case $l\geq4$, $\mathcal{A}$ is triangularizable if
and only if $A_{1},...,A_{l}$ are triangularizable and $\de([A_{j},A_{k}])=0$
for all $j=1,2,3$ and all $j<k\leq l$.
\end{thm}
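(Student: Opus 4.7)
The forward implications in (i) and (ii) are immediate: any simultaneous triangularization makes each $A_j$ upper triangular and every commutator $[A_j,A_k]$ strictly upper triangular, which forces $\det([A_j,A_k])=0$.

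For the converse, my plan is to first triangularize only the reduced block $\mathcal{A}'=(A_1,\ldots,A_l)$ and then show that the remaining terms $A_j$ with $j>l$ fall into the same triangular basis automatically. By maximality of the reduced length, each such $A_j$ must commute with some $A_i\in\mathcal{A}'$. In case (ii) with $l\geq 4$, reducedness of $\mathcal{A}'$ forces every term of $\mathcal{A}'$ to be non-scalar, since a scalar matrix commutes with everything and so cannot sit inside a reduced pair. A short direct computation then shows that any $2\times 2$ matrix commuting with a non-scalar upper triangular matrix is itself upper triangular in the same basis. So once $\mathcal{A}'$ is in upper triangular form, each further $A_j$ is automatically upper triangular in that basis. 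In case (i) the same observation applies provided $A_1$ is chosen non-scalar (possible whenever $\mathcal{A}$ contains a non-scalar term; when every term is scalar $\mathcal{A}$ is trivially triangularizable).

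Part (i) is then finished immediately, since $\mathcal{A}'$ is triangularizable by hypothesis. For part (ii), I still need to deduce triangularizability of $\mathcal{A}'$ from the pair-determinant conditions involving $A_1,A_2,A_3$. By Theorem~\ref{thm:leq2}, a reduced sequence of length $\geq 4$ is triangularizable if and only if all of its pairs are, and by Theorem~\ref{thm:Flo} this pair condition reduces to $\det([A_j,A_k])=0$ in the presence of individual triangularizability. The hypothesis supplies this for pairs with $j\in\{1,2,3\}$, so the remaining task is to propagate to pairs $(A_j,A_k)$ with $4\leq j<k\leq l$.

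My strategy is to exhibit a single common eigenvector $v$ of $A_1,\ldots,A_l$: with $v$ as the first basis vector, $\mathcal{A}'$ is simultaneously upper triangular and all pair-determinants vanish automatically. For each non-commuting pair $(A_i,A_k)$ with $\det([A_i,A_k])=0$, Theorem~\ref{thm:Flo} produces a common eigenvector direction $v_{ik}$, and uniqueness of $v_{ik}$ follows because two linearly independent common eigenvectors would make $A_i,A_k$ simultaneously diagonal and hence commuting. The hypothesis furnishes $v_{1k},v_{2k},v_{3k}$ for each $k=4,\ldots,l$. The key step, and the main obstacle, is a case analysis to show that for each $k$ the three directions $v_{1k},v_{2k},v_{3k}$ coincide and that their common value $v$ is independent of $k$. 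The argument is by contradiction: if two of these directions disagree, an enumeration of the at-most-two eigenvector directions of each $A_i$ and $A_k$ forces some pair inside $\mathcal{A}'$ to share both eigenvector directions, and hence commute, contradicting reducedness. Jordan-block sub-cases in which some $A_i$ has only one eigenvector direction simplify rather than complicate the argument, since the coincidence becomes automatic; the same analysis carries over from an algebraically closed field to a general integral domain $R$ using the integrality already provided by Theorem~\ref{thm:Flo}.
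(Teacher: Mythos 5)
Your approach is genuinely different from the paper's, though the outer layer is the same. Like the paper, you first invoke Proposition~\ref{pro:reduce} and Corollary~\ref{cor:reduce} to observe that each $A_j$ with $j>l$ commutes with some non-scalar term of $\mathcal{A}'$, so once $\mathcal{A}'$ is upper-triangular the remaining terms are automatically upper-triangular in the same basis; part~(i) is then immediate. The divergence is in part~(ii), in how one shows $\mathcal{A}'$ itself is triangularizable. The paper handles the base case $l=4$ with Proposition~\ref{pro:quadrup} and then runs the induction of Theorem~\ref{thm:induct}: after conjugating $(A_1,\dots,A_m)$ to upper-triangular form, the three conditions $\sigma_{1,m+1}=\sigma_{2,m+1}=\sigma_{3,m+1}=0$ form a linear system on $(b_{m+1},e_{m+1},c_{m+1})$ whose relevant $3\times3$ determinant is $\pm\delta_{12}\delta_{23}\delta_{31}\neq 0$ by reducedness, forcing $c_{m+1}=0$ at a stroke. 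You instead propose to exhibit a single common eigenvector of $A_1,\dots,A_l$ by reconciling, for each $k\geq4$, the common eigenvector directions $v_{1k},v_{2k},v_{3k}$. This is a geometric recasting of the same constraints: the paper's version buys a one-line determinant computation in place of a branching enumeration, while yours buys a more geometric picture of why the induction step works.

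The caveat is that the key step — showing $v_{1k}=v_{2k}=v_{3k}$ for each $k$, which you yourself flag as ``the main obstacle'' — is only sketched in a single sentence, and it carries the entire weight of the argument. The sketch is in the right direction: if $v_{1k}\neq v_{2k}$ these must be the two eigenvector lines of $A_k$, and then the uniqueness of common eigendirections for non-commuting pairs with $\sigma=0$, applied to $v_{12},v_{13},v_{23}$ under the hypotheses $\sigma_{12}=\sigma_{13}=\sigma_{23}=0$, together with reducedness, eventually forces two of $A_1,A_2,A_3,A_k$ to share both eigenlines and hence commute, a contradiction. I have checked that this case analysis does close up, but it involves enough branches (including checking that Jordan-block degeneracies collapse favorably, as you assert) that a reader cannot verify your proof as written, whereas the paper's rank argument is essentially immediate once the determinant is identified. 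The transfer from $\bar{\mathbb{F}}$ to the integral domain $R$ is also asserted rather than argued: you still need that the $R$-valued eigenvector of $(A_1,A_2)$ supplied by Theorem~\ref{thm:Flo} (satisfying $xR+yR=R$) is, over the fraction field, the same line as the unique common eigendirection you constructed, and that each remaining $A_j$ acts on it with eigenvalue in $R$; both are true but deserve an explicit sentence.
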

In the second part of the article, we work mainly over a field $\mathbb{F}$.
Section \ref{sec:SimSim} deals with simultaneous similarity for $2\times2$
matrix sequences. Using standard invariant theory, one sees that the
values of the Drensky generators are enough to distinguish semisimple
conjugacy classes. But these classes should depend on $4n-3$ parameters
only, the {}``dimension'' of the quotient space $V_{n}/G$ ($V_{n}$
has dimension $4n$, and $G$ acts as $SL_{2}(\mathbb{F})$, a three
dimensional group) which is much less than the number, $2n+\binom{n}{2}+\binom{n}{3}$,
of Drensky generators. After describing rational invariants that distinguish
general triangularizable sequences, we obtain a solution, with the
minimal number of invariants, to problem (ii) for non-commutative
sequences as follows.

Let $\mathcal{S}'$ (resp. $\mathcal{U}'$) denote the subsets of
$V_{n}=V_{n}(\mathbb{F})$ of semisimple (resp. triangularizable)
sequences such that $A_{1}$ is diagonalizable and $[A_{1},A_{2}]\neq0$.
Using the $G$-invariant functions $t_{J}:V_{n}\to\mathbb{F}$ in
(\ref{eq:generators}) define the maps $\Phi':\mathcal{S}'/G\to\mathbb{F}^{4n-3}$
and $\Psi':\mathcal{U}'/G\to\mathbb{F}^{2n}\times\mathbb{P}^{n-2}$,
where $\mathbb{P}^{k}$ denotes the projective space over $\mathbb{F}$
of dimension $k$, by the formulae \begin{eqnarray}
\Phi'([\mathcal{A}]) & = & \left(t_{1},t_{11},t_{2},t_{22},t_{12},...,t_{k},t_{1k},t_{2k},s_{12k},...,t_{n},t_{1n},t_{2n},s_{12n}\right),\label{eq:Phi'}\\
\Psi'([\mathcal{A}]) & = & \left(t_{1},t_{11},t_{2},t_{12},...,t_{k},t_{1k},...,t_{n},t_{1n};\ \psi'\right),\label{eq:Psi'}\end{eqnarray}
where $\psi'$ is defined later in Section \ref{sec:CanonicalForms},
and $[\mathcal{A}]$ denotes the conjugacy class of $\mathcal{A}$. 

\begin{thm}
\label{thm:Invariants}Let $\mathbb{F}$ be a field of characteristic
$\neq2$. The map $\Phi':\mathcal{S}'/G\to\mathbb{F}^{4n-3}$ is injective
and the map $\Psi':\mathcal{U}'/G\to\mathbb{F}^{2n}\times\mathbb{P}^{n-2}$
is two-to-one.
\end{thm}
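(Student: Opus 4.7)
The plan has two parts, mirroring the two claims. In both parts the opening move is to exploit that $A_1$ is diagonalizable and $[A_1,A_2]\neq 0$: this forces $A_1$ to have two distinct eigenvalues $\lambda_1,\lambda_2$, which are recovered from $t_1$ and $t_{11}$ by solving the quadratic with coefficients determined by these (here char $\neq 2$ enters to write $\lambda_1\lambda_2=(t_1^{2}-t_{11})/2$). After conjugating into a basis in which $A_1=\mathrm{diag}(\lambda_1,\lambda_2)$, the residual conjugation symmetry consists of the diagonal torus $T\subset G$ together with the Weyl swap $\tau$.

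For the semisimple part, write $A_2=\left(\begin{smallmatrix}a_2&b_2\\c_2&d_2\end{smallmatrix}\right)$. The linear system $t_2=a_2+d_2$, $t_{12}=\lambda_1 a_2+\lambda_2 d_2$ is nonsingular (since $\lambda_1\neq\lambda_2$), so $a_2,d_2$ are determined; then $t_{22}=a_2^{2}+d_2^{2}+2b_2c_2$ yields $b_2c_2$. Semisimplicity together with $[A_1,A_2]\neq 0$ rules out $b_2c_2=0$, since any such orbit would limit under $T$ to a commutative sequence and therefore fail to be closed. Thus $T$ may be used to normalize $b_2=1$, which pins $c_2$ down. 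For each $k\geq 3$ the diagonal entries $a_k,d_k$ of $A_k$ come from $t_k,t_{1k}$, and a direct computation gives
\[
 s_{12k}\;=\;t_{12k}-t_{k21}\;=\;(\lambda_1-\lambda_2)(b_2 c_k-c_2 b_k),
\]
while $t_{2k}-a_2 a_k-d_2 d_k=b_2 c_k+c_2 b_k$. These two relations form a nondegenerate $2\times 2$ linear system for $(b_2 c_k,c_2 b_k)$ (determinant $2$, once more using char $\neq 2$), and since $b_2,c_2\neq 0$ the entries $b_k,c_k$ are uniquely determined. Thus $\mathcal{A}$ is reconstructed from $\Phi'([\mathcal{A}])$, proving injectivity.

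For the triangularizable case the same normalization puts $A_1=\mathrm{diag}(\lambda_1,\lambda_2)$, and now the common invariant flag must be one of the two $A_1$-invariant lines $\mathbb{F}e_1$ or $\mathbb{F}e_2$. Fixing $\mathbb{F}e_1$ makes each $A_k$ upper triangular with diagonal $(\alpha_k,\beta_k)$ recovered from $t_k$ and $t_{1k}$; the only remaining data are the off-diagonal entries $b_k$, which under $T$ transform as $(b_2,\dots,b_n)\mapsto(u b_2,\dots,u b_n)$, so the projective class $[b_2:\cdots:b_n]\in\mathbb{P}^{n-2}$ is the natural invariant (well defined because $b_2\neq 0$ from $[A_1,A_2]\neq 0$). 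I expect this to match the definition of $\psi'$ in Section~\ref{sec:CanonicalForms}. Choosing the opposite flag produces the transposed sequence $\mathcal{A}^{T}=(A_1^{T},\dots,A_n^{T})$; since the traces of two-letter words are cyclically symmetric, $\Psi'(\mathcal{A})=\Psi'(\mathcal{A}^{T})$, and the reconstruction above shows that these are the only two pre-images, giving the two-to-one statement.

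The main obstacle I anticipate is certifying that no further identification of orbits takes place. Specifically, I must verify that the orbits of $\mathcal{A}$ and $\mathcal{A}^{T}$ are generically distinct in $\mathcal{U}'/G$ (they coincide exactly when $\mathcal{A}$ is simultaneously similar to its transpose, which is a nongeneric symmetry), and I must reconcile the projective datum above with the precise definition of $\psi'$ in Section~\ref{sec:CanonicalForms}, a bookkeeping step confirming that $\psi'$ separates orbits up to the transpose involution.
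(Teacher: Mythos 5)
Your approach is in essence the same as the paper's: put $A_1$ in diagonal form using $t_1,t_{11}$, normalize $b_2=1$ with the residual torus, and reconstruct the remaining entries from $(t_k,t_{1k},t_{2k},s_{12k})$ by solving a linear system, with the surviving ambiguity being the Weyl swap $e\leftrightarrow-e$. The paper packages this through explicit canonical forms (Theorem~\ref{thm:canonical}, Proposition~\ref{pro:UniqueCanonical}) and the explicit $4\times4$ matrix of Equation~(\ref{eq:ReconstructSS}), then descends to $\mathbb{F}$ via Noether--Deuring. Your $2\times2$ linear system in $(b_2c_k,c_2b_k)$ is exactly the nontrivial block of that matrix, so the core computation is identical.

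There is, however, a genuine gap in the way you rule out $b_2c_2=0$. You argue that if $c_2=0$ (say) the orbit limits under the diagonal torus to a commutative sequence, hence is not closed. This works when $n=2$, but not for $n\geq3$: if another term has $c_k\neq0$, the one-parameter subgroup $\mathrm{diag}(\lambda,\lambda^{-1})$ makes that entry blow up as $\lambda\to0$, so the limit simply does not exist and you obtain no contradiction to closedness. Indeed, the paper's Example~\ref{exa:example} and canonical form type~1.c exhibit a stable (hence semisimple) non-commutative triple with $A_1$ diagonal, $[A_1,A_2]\neq0$, and $\sigma_{12}=e_1^2b_2c_2=0$ --- precisely a sequence in $\mathcal{S}'$ on which your normalization puts $c_2=0$ and the reconstruction system becomes singular (you can only recover $b_2c_k$, not $b_k$ and $c_k$ separately). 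Note that $[A_1,A_2]\neq0$ gives $b_2\neq0$ or $c_2\neq0$ but \emph{not} $b_2c_2\neq0$; what you actually need is $\sigma_{12}=\det[A_1,A_2]\neq0$, which is strictly stronger. The paper is aware of this and handles it separately: $c_2=\sigma_{12}/e_1^2$ in Equation~(\ref{eq:ReconstructPair}), the reconstruction there is carried out only under the hypothesis $\sigma_{12}\neq0$ (form~1.a), and Remark~\ref{rem:NoRestriction} patches the $\sigma_{12}=0$ cases by the substitution $(A_1,A_2,A_3)\mapsto(A_1,A_2+A_3,A_2-A_3)$. You would need to either add that case analysis or incorporate the hypothesis $\sigma_{12}\neq0$ explicitly.

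Two smaller points. First, for the two-to-one claim you leave ``no further identification'' as an anticipated difficulty; the paper settles it cleanly via Lemma~\ref{lem:e=00003De'}, which shows $e$ is a well-defined invariant of a non-commutative upper-triangular conjugacy class, so (in characteristic $\neq2$, with $e\neq0$) the sequences with $e$ and with $-e$ are genuinely distinct orbits --- not just generically but always --- and the map is exactly two-to-one. Second, your formula $s_{12k}=(\lambda_1-\lambda_2)(b_2c_k-c_2b_k)$ is correct and is the right key identity; combined with $t_{2k}-a_2a_k-d_2d_k=b_2c_k+c_2b_k$ it gives the $2\times2$ system with determinant $2b_2c_2$, which again makes the necessity of $b_2c_2\neq0$ (and of characteristic $\neq2$) visible.
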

The last section (Section \ref{sec:CanonicalForms}) concerns the
classification of canonical forms for sets of $2\times2$ matrices
over $\bar{\mathbb{F}}$ and proposes a solution for problem (i).
The main result is Theorem \ref{thm:canonical}, where five types
of canonical forms are obtained for sequences with at least one non-commuting
pair. It shows that, for non-commutative sequences, the restriction
to $\mathcal{S}'$ and $\mathcal{U}'$ in Theorem \ref{thm:Invariants}
is only apparent (see also Remark \ref{rem:NoRestriction}). We also
describe a simple method for finding a sequence in canonical form
with a given value of $\Phi'$ or $\Psi'$.

%
{}Appendix A contains results on the triangularization a single $2\times2$
matrix over $R$ which are crucial for Theorem \ref{thm:Flo}, and
Appendix B, for completeness, describes the well-known canonical forms
of commuting matrices over $\bar{\mathbb{F}}$. 

%
{}

\begin{acknowledgement*}
I would like to thank J. Dias da Silva whose interesting questions
were at the genesis of this article, and R. Guralnick for calling
my attention to related work in the literature and for providing simplified
proofs of some statements in a previous version. I thank also my colleagues
J. Mourão, J. P. Nunes and S. Lawton for many interesting and motivating
conversations on this and related topics. Work partially supported
by the CAMGSD, IST, Technical Univ. of Lisbon, and {}``Fundação para
a Ciência e a Tecnologia'' through the programs Praxis XXI, POCI/MAT/58549/2004
and FEDER. Some computations performed using CoCoA; document typeset
using \LyX{}. 
\end{acknowledgement*}
%
{} %
{}

\section{Simultaneous Triangularization\label{sec:Triangularization}}

Throughout the article, $R$ will stand for an integral domain, $\mathbb{F}$
for a field and $\bar{\mathbb{F}}$ for an algebraically closed field.
$V_{n}$ (resp. $G$) will denote the space of matrix sequences of
length $n\in\mathbb{N}\cup\left\{ \infty\right\} $ (resp. the group
of invertible $2\times2$ matrices) over the appropriate ring or field.
When the coefficients need to be explicitly mentioned, we will use
the notations $V_{n}(R),$ $G(R)$, etc. Sequences of length $1,2,3$
and $4$ will be called \emph{singlets}, \emph{pairs}, \emph{triples}
and \emph{quadruples}, respectively.

\subsection{Simultaneous triangularization and subtriples; reduced sequences}

We start by fixing notation and recalling some well known facts about
matrices over $\mathbb{F}$ and $R$. After this, we define \emph{reduced
sequences}, a notion which will be fundamental in the sequel. 

For a given $\mathcal{A}\in V_{n}(R)$, a matrix sequence of length
$n\in\mathbb{N}$, we will use the notation\begin{eqnarray*}
\mathcal{A}=(A_{1},...,A_{n}) & = & \left(\begin{array}{cc}
a & b\\
c & d\end{array}\right),\quad a,b,c,d\in R^{n},\\
A_{j} & = & \left(\begin{array}{cc}
a_{j} & b_{j}\\
c_{j} & d_{j}\end{array}\right),\end{eqnarray*}
and we let $e=(e_{1},...,e_{n})$ denote the $n$-tuple $a-d\in R^{n}$.
To avoid the most trivial case, we consider only matrix sequences
with at least one non-scalar term. 

%
{}The commutator of 2 matrices $A_{1}$ and $A_{2}$ is given by \begin{equation}
[A_{1},A_{2}]=\left(\begin{array}{cc}
b_{1}c_{2}-c_{1}b_{2} & e_{1}b_{2}-b_{1}e_{2}\\
c_{1}e_{2}-e_{1}c_{2} & c_{1}b_{2}-b_{1}c_{2}\end{array}\right).\label{eq:comuta}\end{equation}
For later use, record the following straightforward but useful lemma.

\begin{lem}
\label{lem:comm} Let $\mathcal{A}=(A_{1},A_{2})\in V_{2}(R)$ and
$A_{1}$ be a non-scalar matrix. If $A_{1}$ is upper triangular,
then $[A_{1},A_{2}]=0$ if and only if $A_{2}$ is also upper triangular
and \begin{equation}
b_{1}e_{2}-e_{1}b_{2}=0.\label{eq:comm}\end{equation}
Similarly, let $A_{1}$ be diagonal non-scalar. Then $[A_{1},A_{2}]=0$
if and only if $A_{2}$ is also diagonal. 
\end{lem}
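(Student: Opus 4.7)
The proof is a direct computation from the explicit formula for the commutator given in equation (\ref{eq:comuta}), exploiting the fact that $R$ is an integral domain.

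First, I would handle the upper triangular case. Setting $c_1=0$ in (\ref{eq:comuta}) gives
\[
[A_1,A_2]=\begin{pmatrix} b_1c_2 & e_1b_2-b_1e_2\\ -e_1c_2 & -b_1c_2\end{pmatrix},
\]
so $[A_1,A_2]=0$ is equivalent to the three conditions $b_1c_2=0$, $e_1c_2=0$, and $e_1b_2-b_1e_2=0$. The converse direction of the lemma is then immediate: if $A_2$ is upper triangular ($c_2=0$) and (\ref{eq:comm}) holds, all three equations are satisfied.

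For the forward direction, I would split on whether $e_1$ vanishes. Since $A_1$ is non-scalar and upper triangular, at least one of $b_1,e_1$ is nonzero. If $e_1\neq 0$, the equation $e_1c_2=0$ forces $c_2=0$ in $R$, and (\ref{eq:comm}) is exactly the third equation. If $e_1=0$, then necessarily $b_1\neq 0$; now $b_1c_2=0$ forces $c_2=0$, and $e_1b_2-b_1e_2=-b_1e_2=0$ forces $e_2=0$, so (\ref{eq:comm}) holds trivially and $A_2$ is upper triangular. In both cases $A_2$ is upper triangular and (\ref{eq:comm}) holds.

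For the diagonal case, substituting $b_1=c_1=0$ into (\ref{eq:comuta}) gives
\[
[A_1,A_2]=\begin{pmatrix} 0 & e_1b_2\\ -e_1c_2 & 0\end{pmatrix},
\]
and since $A_1$ is non-scalar diagonal we have $e_1\neq 0$, so in an integral domain this vanishes if and only if $b_2=c_2=0$, i.e.\ $A_2$ is diagonal. No step should pose any real obstacle; the only minor care needed is the case split on $e_1$ in the triangular case, which is what ensures condition (\ref{eq:comm}) is really equivalent to commutativity rather than being trivially satisfied.
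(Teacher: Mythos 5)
Your proof is correct and follows essentially the same route as the paper: expand the commutator via equation (\ref{eq:comuta}) with $c_1=0$, read off the three vanishing conditions, and use the non-scalar hypothesis together with the integral-domain property to force $c_2=0$. The paper compresses your case split on $e_1$ into a single line, but the underlying argument is identical.
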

\begin{proof}
Suppose $[A_{1},A_{2}]=0$ with $c_{1}=0$. Then we have $b_{1}e_{2}-e_{1}b_{2}=b_{1}c_{2}=e_{1}c_{2}=0$,
using (\ref{eq:comuta}). Since $A_{1}$ is non-scalar either $e_{1}$
or $b_{1}$ is non-zero. In an integral domain, this implies $b_{1}e_{2}-e_{1}b_{2}=c_{2}=0$.
The other statement is similar. 
\end{proof}
This implies the following well known result. Note that a $2\times2$
matrix is non-scalar if and only if it is nonderogatory.

\begin{prop}
\label{cor:comm}Let $\mathcal{A}$ be a commutative matrix sequence
(i.e, all terms pairwise commute) of finite or infinite length over
an integral domain $R$. Then $\mathcal{A}$ is triangularizable if
and only if one of its non-scalar terms is triangularizable. Similarly,
$\mathcal{A}$ is diagonalizable if and only if one of its non-scalar
terms is diagonalizable.\hfill{}$\square$
\end{prop}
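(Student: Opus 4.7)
The plan is to reduce everything directly to Lemma \ref{lem:comm} applied pair-by-pair, with the key observation that a single conjugating matrix works for all terms simultaneously (so the argument is insensitive to whether the sequence is finite or infinite).

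For the ``only if'' direction I would note that if $\mathcal{A}$ is triangularizable then there exists $g\in G$ such that every term of $g\cdot\mathcal{A}$ is upper triangular. Any individual term $A_{j}$ of $\mathcal{A}$ is then similar to an upper triangular matrix via this same $g$, hence is triangularizable; in particular the non-scalar term hypothesized in the statement is triangularizable. The analogous argument handles the diagonalizable case, since conjugating by $g$ that diagonalizes the whole sequence in particular diagonalizes each individual term.

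For the ``if'' direction, assume (after reordering) that $A_{1}$ is a non-scalar term that is triangularizable, and choose $g\in G$ such that $B_{1}:=gA_{1}g^{-1}$ is upper triangular. Being non-scalar is a conjugation-invariant property, so $B_{1}$ is still non-scalar. Setting $B_{j}:=gA_{j}g^{-1}$ for all $j\geq 2$, commutativity of $\mathcal{A}$ gives $[B_{1},B_{j}]=g[A_{1},A_{j}]g^{-1}=0$. Applying the first part of Lemma \ref{lem:comm} to each pair $(B_{1},B_{j})$ shows that every $B_{j}$ is upper triangular, so $g\cdot\mathcal{A}$ is an upper triangular matrix sequence and $\mathcal{A}$ is triangularizable. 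The diagonalization statement is proved identically, choosing $g$ so that $B_{1}$ is diagonal (still non-scalar) and applying the second part of Lemma \ref{lem:comm} to each pair $(B_{1},B_{j})$.

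There is really no obstacle here: the work has already been done in Lemma \ref{lem:comm}, and the only thing to check is that the same $g$ simultaneously triangularizes (resp.\ diagonalizes) every term, which is automatic since the upper-triangularity conclusion of Lemma \ref{lem:comm} is extracted from a fixed non-scalar $B_{1}$ rather than from $B_{j}$ itself. Scalar terms of $\mathcal{A}$ (if any) require no separate treatment, as they are upper triangular and diagonal in any basis. This is why the only standing hypothesis needed is the existence of one non-scalar term with the relevant property.
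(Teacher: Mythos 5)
Your proof is correct and follows exactly the route the paper intends: the paper marks the proposition with a terminal $\square$ and the phrase ``This implies'' to indicate it is an immediate consequence of Lemma \ref{lem:comm}, and your argument simply spells out that consequence by conjugating the chosen non-scalar term into upper-triangular (resp.\ diagonal) form and then applying the lemma pair-by-pair with that fixed term. Nothing to add.
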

As a consequence, the proof of Proposition \ref{pro:leq3} is complete
after the following. Let us denote by $\mathbb{A}$ the algebra generated
over $\mathbb{F}$, the field of fractions of $R$, by the terms of
$\mathcal{A}$. A well known result is that $\mathcal{A}$ is commutative
if and only if the dimension of $\mathbb{A}$ is $\leq2$.

\begin{prop}
Let $n\geq2$ and $\mathcal{A}\in V_{n}(R)$ be a non-commutative
matrix sequence. Then $\mathcal{A}$ is triangularizable if and only
if all subsequences of $\mathcal{A}$ of length $\leq3$ are triangularizable. 
\end{prop}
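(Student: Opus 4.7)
The forward direction is immediate since any subsequence of a triangularizable sequence is obviously triangularizable. For the converse, my plan exploits the non-commutativity: since $\mathcal{A}$ is non-commutative, there exist terms $A_i,A_j$ with $[A_i,A_j]\neq 0$. By hypothesis the pair $(A_i,A_j)$ is triangularizable over $R$, so after replacing $\mathcal{A}$ by $g\cdot\mathcal{A}$ for a suitable $g\in G(R)$, I may assume $A_i$ and $A_j$ are both upper triangular with entries in $R$. The strategy is then to show that this choice of conjugation already places \emph{every} term of $\mathcal{A}$ in upper triangular form.

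The key step is a uniqueness statement: working over the field of fractions $\mathbb{F}$ of $R$, the subspace $L_0:=\mathrm{span}_{\mathbb{F}}(e_1)$ is the \emph{only} $1$-dimensional $\mathbb{F}$-subspace of $\mathbb{F}^2$ that is invariant under both $A_i$ and $A_j$. Indeed, any other common invariant line would be spanned by a second common eigenvector of $A_i,A_j$, giving two linearly independent common eigenvectors; in the associated basis both matrices would be diagonal and therefore commute, contradicting $[A_i,A_j]\neq 0$. (This argument is essentially the second part of Lemma \ref{lem:comm} read over $\mathbb{F}$.)

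Now take any other term $A_k$ of $\mathcal{A}$. By hypothesis the triple $(A_i,A_j,A_k)$ is triangularizable over $R$, and a fortiori over $\mathbb{F}$, so there is a common $1$-dimensional invariant subspace $L\subset\mathbb{F}^2$ for all three matrices. By the uniqueness just established, $L=L_0$, and hence $A_k$ preserves $L_0$. This means the $(2,1)$-entry of $A_k$ vanishes, so $A_k$ is upper triangular with entries in $R$. Applying this to every term yields that $\mathcal{A}$ itself is upper triangular in the current basis, proving triangularizability over $R$.

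The only delicate point I foresee is the legitimacy of passing from triangularization over $R$ to the invariant-subspace picture over $\mathbb{F}$: one must use that $R$-triangularizability of the triple implies existence of an $\mathbb{F}$-invariant line, which is standard because any $g\in G(R)$ also lies in $G(\mathbb{F})$ and the first standard basis vector spans such a line. Once that is checked, the argument reduces cleanly to the uniqueness of the common invariant flag for a non-commuting triangular pair; no computation beyond inspecting the commutator formula (\ref{eq:comuta}) is required.
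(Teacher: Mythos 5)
Your proof is correct. The structure matches the paper's at the outermost level — conjugate a non-commuting pair into upper triangular form, then use the triple hypothesis to force every remaining term into the same form — but the key lemma you invoke is genuinely different. You argue geometrically: a non-commuting pair in upper triangular form has a \emph{unique} common invariant line $L_0$ over the fraction field $\mathbb{F}$ (since two distinct common invariant lines would simultaneously diagonalize the pair and force commutativity), so any common invariant line for the triple $(A_i,A_j,A_k)$ must equal $L_0$, and hence $c_k=0$. The paper argues algebraically: the $\mathbb{F}$-algebra $\mathbb{A}_k$ generated by the triple must have dimension exactly $3$ (dimension $\leq 2$ would force commutativity, dimension $4$ would preclude triangularizability), and since it contains the $3$-dimensional algebra $\mathbb{F}[A_1,A_2]$ the two coincide, so $A_k$ is a polynomial in $A_1,A_2$ and hence upper triangular. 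The two arguments are really dual descriptions of the same phenomenon — the algebra of upper triangular matrices is precisely the stabilizer of $L_0$ — but yours is the more elementary and directly computational one (requiring only inspection of eigenvectors), while the paper's dimension count fits more naturally with the algebra-generation theme used elsewhere in the introduction. One minor point worth making explicit: non-commutativity of $A_i,A_j$ already guarantees both are non-scalar, which is needed for your uniqueness argument (a scalar matrix fixes every line); you use this implicitly.
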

\begin{proof}
If $\mathcal{A}$ is triangularizable, it is clear that all subsequences
of $\mathcal{A}$ are also triangularizable. Conversely, since $\mathcal{A}$
is non-commutative, we can assume without loss of generality, that
$[A_{1},A_{2}]\neq0$, and that, after a suitable conjugation, both
$A_{1}$ and $A_{2}$ are upper triangular. By hypothesis, all triples
of the form $(A_{1},A_{2},A_{k})$, $k=3,...,n$ are triangularizable.
Then, the algebra $\mathbb{A}_{k}$ generated by $\mathcal{A}_{k}=(A_{1},A_{2},A_{k})$
equals the one generated by $(A_{1},A_{2})$, since one is a subset
of the other but both are three dimensional over $\mathbb{F}$. Indeed,
if $\mathbb{A}_{k}$ was of dimension $\leq2$, $\mathcal{A}_{k}$
would be commutative, and if $\mathbb{A}_{k}$ was four dimensional,
$\mathcal{A}_{k}$ would not be triangularizable. Therefore, for all
$j=3,...,n$, $A_{j}=p_{j}(A_{1},A_{2})$ for some polynomial $p_{j}$
with coefficients in $\mathbb{F}$ and thus, $A_{j}$ is also upper
triangular, for all $j$. 
\end{proof}
Recall also the following.

\begin{lem}
\label{lem:non-der}Let $\mathcal{A}=(A_{1},A_{2})\in V_{2}(\mathbb{F})$
be commutative and $A_{1}$ be a non-scalar matrix. Then $A_{2}=p(A_{1})$
for some degree 1 polynomial $p(x)\in\mathbb{F}[x]$. 
\end{lem}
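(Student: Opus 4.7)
The plan is to show that the centralizer of $A_{1}$ in $M_{2}(\mathbb{F})$ equals $\mathbb{F}[A_{1}]=\mathbb{F}\cdot I+\mathbb{F}\cdot A_{1}$, which then forces $A_{2}=\alpha I+\beta A_{1}=p(A_{1})$ for suitable $\alpha,\beta\in\mathbb{F}$. (Here the phrase ``degree $1$'' is read as ``degree at most $1$'', so that the case where $A_{2}$ is itself scalar, i.e., $\beta=0$, is also covered.)

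The first step is to produce a \emph{cyclic vector} for $A_{1}$, that is, a vector $v\in\mathbb{F}^{2}$ such that $\{v,A_{1}v\}$ is a basis of $\mathbb{F}^{2}$. Since $A_{1}$ is non-scalar, not every vector of $\mathbb{F}^{2}$ can be an eigenvector of $A_{1}$: if $A_{1}v\in\mathbb{F}\cdot v$ held for all $v$, then testing on two distinct basis vectors and their sum quickly forces $A_{1}$ to be a scalar multiple of $I$. So I can pick $v$ which is not an eigenvector, and then $v$ and $A_{1}v$ are linearly independent and therefore form a basis of $\mathbb{F}^{2}$.

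The second and final step uses the hypothesis $[A_{1},A_{2}]=0$. Expand $A_{2}v$ uniquely in the basis $\{v,A_{1}v\}$ as $A_{2}v=\alpha v+\beta A_{1}v$ with $\alpha,\beta\in\mathbb{F}$, and set $p(x):=\alpha+\beta x\in\mathbb{F}[x]$. Then $A_{2}v=p(A_{1})v$ by construction, while on the other basis vector one computes
\begin{equation*}
A_{2}(A_{1}v)=A_{1}(A_{2}v)=A_{1}\,p(A_{1})v=p(A_{1})(A_{1}v),
\end{equation*}
using commutativity in the first equality. Hence $A_{2}$ and $p(A_{1})$ coincide on a basis of $\mathbb{F}^{2}$, so they are equal as elements of $M_{2}(\mathbb{F})$. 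There is no genuine obstacle here; the only delicate point is the existence of the cyclic vector, which is elementary for a non-scalar $2\times 2$ matrix and, crucially, does not require $\mathbb{F}$ to be algebraically closed.
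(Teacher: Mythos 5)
Your proof is correct, but it follows a genuinely different route from the paper's. The paper works coordinate-wise: it writes $[A_1,A_2]=0$ as a linear system $Mu=0$ with $u=(b_2,e_2,c_2)$ and $M$ a skew-symmetric $3\times3$ matrix built from $(b_1,e_1,c_1)$, observes $M$ has rank $2$ because $A_1$ is non-scalar, concludes $(b_2,e_2,c_2)=\alpha(b_1,e_1,c_1)$, and then reads off $A_2=(d_2-\alpha d_1)I+\alpha A_1$. Your argument is the classical cyclic-vector (nonderogatory) argument: pick $v$ with $\{v,A_1v\}$ a basis, expand $A_2v=\alpha v+\beta A_1v$, and use $[A_1,A_2]=0$ to show the two operators also agree on $A_1v$. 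Both are sound; yours is more conceptual and generalizes verbatim to $m\times m$ matrices having a cyclic vector (it is the standard proof that the centralizer of a nonderogatory matrix is $\mathbb{F}[A]$), whereas the paper's version is tailored to $m=2$ and has the virtue of producing the explicit scalar $\alpha$ directly in the notation $(b_j,e_j,c_j)$ that it reuses throughout Section 2 (e.g., in Lemma \ref{lem:stabilizer}). Your reading of ``degree $1$'' as ``degree at most $1$'' is the right one, and the paper's own formula likewise yields a constant polynomial when $\alpha=0$.
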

\begin{proof}
From Equation (\ref{eq:comuta}), the conditions $[A_{1},A_{2}]=0$
can be written as $Mu=0$, where \begin{equation}
M=\left(\begin{array}{rrr}
0 & c_{1} & -e_{1}\\
-c_{1} & 0 & b_{1}\\
e_{1} & -b_{1} & 0\end{array}\right),\quad\quad u=(b_{2},e_{2},c_{2}).\label{eq:comuta2}\end{equation}
Note that $M$ has rank exactly 2, since $A_{1}$ is non-scalar. So,
the vector $u=(b_{2},e_{2},c_{2})$ is in the nullspace of $M$, which
is generated by $(b_{1},e_{1},c_{1})\neq0$. Thus, there is an $\alpha\in\mathbb{F}$
such that $(b_{2},e_{2},c_{2})=\alpha(b_{1},e_{1},c_{1})$. Then,
$A_{2}=(d_{2}-\alpha d_{1})I+\alpha A_{1}$ is, explicitly, the required
polynomial ($I$ denotes the identity $2\times2$ matrix).
\end{proof}
%
{}It is clear that a single matrix is triangularizable over an algebraically
closed field, but not necessarily so over a general integral domain
or field. In Appendix A we include a short account of the conditions
for triangularization of a single matrix in $V_{1}(R)$. The following
notion will play a central role. If $\mathcal{A}$ is a subsequence
of $\mathcal{B}$, we will write $\mathcal{A}\subseteq\mathcal{B}$%
{}.

\begin{defn}
\label{def:reduced}A matrix sequence with at least one non-scalar
term $\mathcal{A}=(A_{1},...)\in V_{n}(R)$ is called \emph{reduced}
if there are no commuting pairs among its terms, that is, $[A_{j},A_{k}]\neq0$,
for all $1\leq j<k\leq n$. We say that $\mathcal{A}$ is a \emph{reduction}
of $\mathcal{B}$ if $\mathcal{A}$ is reduced and is obtained from
$\mathcal{B}$ by deleting some of its terms. Finally, we say that
$\mathcal{A}$ is a \emph{maximal reduction} of $\mathcal{B}$, and
that $l$ is its \emph{reduced length}, if $\mathcal{A}$ is a reduction
of $\mathcal{B}$ of length $l$, and any subsequence $\mathcal{A}'\subseteq\mathcal{B}$
with length $>l$ is not reduced.
\end{defn}
Over a field, by Lemma \ref{lem:non-der}, a reduced sequence $\mathcal{A}$
is one where no term is a polynomial function of another (so all terms
generate an algebra of dimension $2$, and no two terms generate the
same subalgebra of the full matrix algebra). It is clear that any
two maximal reductions have the same length. Note also that any subsequence
of a reduced sequence is also reduced. The following facts show that
important properties like existence of a triangularization are captured
by any maximal reduction of a matrix sequence.

\begin{prop}
\label{pro:reduce} Let $\mathcal{A}=(A_{1},...,A_{n})\in V_{n}(R)$
and let $A_{n+1}$ commute with at least one non-scalar term of $\mathcal{A}$.
Then $\mathcal{A}$ is triangularizable if and only if $\mathcal{A}':=(A_{1},...,A_{n},A_{n+1})$
is triangularizable. 
\end{prop}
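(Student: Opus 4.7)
The plan is to reduce the statement directly to Lemma~\ref{lem:comm}, since the hypothesis is tailor-made for it. One direction is trivial and I would dispose of it first: if $\mathcal{A}'$ is triangularizable, then so is every subsequence, in particular $\mathcal{A}$.

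For the substantive direction, suppose $\mathcal{A} = (A_1, \ldots, A_n)$ is triangularizable, so there exists $g \in G$ with $g \cdot A_j$ upper triangular for every $j = 1, \ldots, n$. By hypothesis there is some index $j_0$ such that $A_{j_0}$ is non-scalar and $[A_{j_0}, A_{n+1}] = 0$. Conjugation by $g$ preserves both the commutator relation and the property of being non-scalar, so $gA_{j_0}g^{-1}$ is non-scalar, upper triangular, and commutes with $gA_{n+1}g^{-1}$. Lemma~\ref{lem:comm} then forces $gA_{n+1}g^{-1}$ to be upper triangular as well. Thus the same $g$ simultaneously triangularizes all of $A_1, \ldots, A_n, A_{n+1}$, proving $\mathcal{A}'$ is triangularizable.

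There is really no obstacle here; the only small point to be careful about is to verify that at least one commuting \emph{non-scalar} term exists in the conjugated sequence (the hypothesis is phrased about $\mathcal{A}$ itself, not about its triangularized form), but this is immediate because being non-scalar is invariant under conjugation. Consequently the whole argument is a one-line application of Lemma~\ref{lem:comm} after a preliminary simultaneous triangularization.
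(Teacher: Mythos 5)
Your proof is correct and follows essentially the same route as the paper's: both triangularize $\mathcal{A}$ first (the paper phrases this as a "without loss of generality" reduction, you make the conjugation by $g$ explicit) and then apply Lemma~\ref{lem:comm} to the conjugated non-scalar commuting term to force $A_{n+1}$ into upper triangular form. Your extra remark that non-scalarity and commutativity are conjugation-invariant is a small but legitimate point of care that the paper leaves implicit.
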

\begin{proof}
Naturally if $\mathcal{A}'$ is triangularizable, $\mathcal{A}$ is
also. For the converse, without loss of generality assume $[A_{1},A_{n+1}]=0$
with $A_{1}$ non-scalar and $\mathcal{A}$ in upper triangular form.
Then, by Lemma \ref{lem:comm}, $A_{n+1}$ is also upper triangular,
so that $\mathcal{A}'$ is triangularizable.
\end{proof}
\begin{cor}
\label{cor:reduce}If $\mathcal{A}$ and $\mathcal{A}'$ are arbitrary
sequences with a common maximal reduction (of length $\geq1$), then
either they are both triangularizable or both not triangularizable. 
\end{cor}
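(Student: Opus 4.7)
The plan is to establish the stronger statement that any matrix sequence shares its triangularizability status with any of its maximal reductions. Once this is proved, applying it to both $\mathcal{A}$ and $\mathcal{A}'$ with their common maximal reduction $\mathcal{C}$ immediately yields Corollary \ref{cor:reduce}.

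To prove the stronger statement, let $\mathcal{C} \subseteq \mathcal{B}$ be a maximal reduction of length $l \geq 1$, and enumerate the terms of $\mathcal{B}$ not appearing in $\mathcal{C}$ as $A_1, \dots, A_r$. I would build the chain $\mathcal{C}_0 := \mathcal{C}$, $\mathcal{C}_i := (\mathcal{C}_{i-1}, A_i)$ for $i = 1, \dots, r$, ending at $\mathcal{C}_r$, which agrees with $\mathcal{B}$ up to a permutation of terms; this permutation is irrelevant, since simultaneous conjugation in (\ref{eq:accao}) commutes with permuting the components. At each step I would invoke Proposition \ref{pro:reduce} to conclude that $\mathcal{C}_i$ is triangularizable if and only if $\mathcal{C}_{i-1}$ is. Iterating yields the desired equivalence $\mathcal{B} \Leftrightarrow \mathcal{C}$.

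The only point needing care is the verification of the hypothesis of Proposition \ref{pro:reduce} at every stage: each new term $A_i$ must commute with a non-scalar term of the current sequence $\mathcal{C}_{i-1}$. By maximality of $\mathcal{C}$, the subsequence $(\mathcal{C}, A_i) \subseteq \mathcal{B}$, of length $l+1$, cannot be reduced; but $\mathcal{C}$ itself is reduced, so any commuting pair in $(\mathcal{C}, A_i)$ must involve $A_i$ and some $C \in \mathcal{C}$. Moreover $C$ is necessarily non-scalar: when $l \geq 2$, a scalar term of $\mathcal{C}$ would commute with every other term of $\mathcal{C}$, contradicting the no-commuting-pairs condition; when $l = 1$, the unique term of $\mathcal{C}$ is non-scalar by Definition \ref{def:reduced}. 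Since $C \in \mathcal{C} \subseteq \mathcal{C}_{i-1}$ throughout the induction, the hypothesis of Proposition \ref{pro:reduce} is satisfied at every step.

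I do not foresee any serious obstacle. The argument is essentially combinatorial bookkeeping on top of Proposition \ref{pro:reduce}, the only delicate points being that scalar terms cannot occur in a reduced sequence (which is why the hypothesis $l \geq 1$ is needed to apply \ref{pro:reduce} at all) and that the ordering of components is harmless for triangularizability.
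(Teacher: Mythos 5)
Your proof is correct and follows essentially the same approach as the paper's: both build $\mathcal{A}$ and $\mathcal{A}'$ from the common maximal reduction by adding one term at a time and invoking Proposition \ref{pro:reduce} at each step. You fill in, more carefully than the paper does, the verification that each added term commutes with a non-scalar term of the current sequence (and the observation that reduced sequences of length $\geq 1$ contain no scalar terms), but the underlying argument is the same.
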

\begin{proof}
Let $\mathcal{B}$ be such a common maximal reduction. Then $\mathcal{A}$
and $\mathcal{A}'$ are obtained from $\mathcal{B}$ by adding scalar
matrices or matrices that commute with some of the non-scalar terms
of $\mathcal{B}$. So, if $\mathcal{B}$ is triangularizable, both
$\mathcal{A}$ and $\mathcal{A}'$ are triangularizable by repeatedly
applying Proposition \ref{pro:reduce}. The case when $\mathcal{B}$
is not triangularizable is analogous. 
\end{proof}

\subsection{Necessary conditions for triangularization via invariants}

%
{}We continue to work over an integral domain $R$. Define the following
important $GL_{2}(R)$-invariant functions. For a matrix $A\in V_{1}$,
let $\delta_{A}$ denote the discriminant of its characteristic polynomial,
that is $\delta_{A}=\tr^{2}A-4\de A$.%
{}

\begin{defn}
\label{def:tau,sigma}Let $\tau,\sigma:V_{2}(R)\to R$ and $\Delta:V_{3}(R)\to R$
be defined by \begin{eqnarray*}
\tau(A,B) & := & 2\tr(AB)-\tr(A)\tr(B),\\
\sigma(A,B) & := & \de(AB-BA)\\
\Delta(A,B,C) & := & \left(\tr(ABC-CBA)\right)^{2}.\end{eqnarray*}

\end{defn}
When a matrix sequence is written as $\mathcal{A}=(A_{1},...,A_{n})\in V_{n}(R)$
we will also use \begin{eqnarray*}
\tau_{jk} & = & \tau_{jk}(\mathcal{A})=\tau(A_{j},A_{k})\\
\sigma_{jk} & = & \sigma_{jk}(\mathcal{A})=\sigma(A_{j},A_{k})\\
\Delta_{jkl} & = & \Delta_{jkl}(\mathcal{A})=\Delta(A_{j},A_{k},A_{l}),\end{eqnarray*}
for any indices $j,k,l\in\{1,...,n\}$. By simple computations , we
can express these functions in terms of $b_{j},c_{j},e_{j}$ as follows.\begin{eqnarray}
\tau_{jk} & = & e_{j}e_{k}+2b_{j}c_{k}+2c_{j}b_{k}\nonumber \\
\sigma_{jk} & = & \left(b_{j}e_{k}-e_{j}b_{k}\right)\left(c_{j}e_{k}-e_{j}c_{k}\right)-\left(b_{j}c_{k}-c_{j}b_{k}\right)^{2}\label{eq:explicit}\\
\Delta_{jkl} & = & \left|\begin{array}{ccc}
b_{j} & b_{k} & b_{l}\\
e_{j} & e_{k} & e_{l}\\
c_{j} & c_{k} & c_{l}\end{array}\right|^{2}.\label{eq:explicitDelta}\end{eqnarray}

Note that $\tau,\sigma$ and $\Delta$ are symmetric under permutation
of any matrices/indices, but $\sigma$ and $\Delta$ vanish when 2
matrices/indices coincide. Since the above expressions do not depend
explicitly on the variables $a_{j}$ or $d_{j}$ but only on the difference
$e_{j}=a_{j}-d_{j}$, the functions $\tau,\sigma$ and $\Delta$ are
invariant under translation of any argument by a scalar matrix, that
is, for any matrices $A,B$ and scalar matrices $\lambda,\mu$, we
have $\tau(A+\lambda,B+\mu)=\tau(A,B)$ and similarly for $\sigma$
and $\Delta$.

\begin{rem}
\label{rem:relations}Note that these are essentially the same functions
used in Drensky's theorem \ref{thm:Drensky} \cite{D}. They also
coincide with the functions used in \cite{Fl}, up to a constant factor.
There are some interesting relations between these invariants which
are obtained from simple calculations. In particular, we have\begin{eqnarray}
\tau(A,A) & = & \delta_{A}=\tr^{2}A-4\de A\nonumber \\
\sigma(A,B) & = & \mathsf{tr}(A[A,B]B)=\frac{1}{4}\left(\tau(A,A)\tau(B,B)-\tau(A,B)^{2}\right)\label{eq:sigma}\\
\Delta(A,B,C) & = & -\frac{1}{4}\left|\begin{array}{ccc}
\tau(A,A) & \tau(A,B) & \tau(A,C)\\
\tau(B,A) & \tau(B,B) & \tau(B,C)\\
\tau(C,A) & \tau(C,B) & \tau(C,C)\end{array}\right|,\nonumber \end{eqnarray}
for all matrices $A,B,C$ over $R$, in agreement with Equation (\ref{eq:relations}).%
{}
\end{rem}
The following is a simple necessary condition for triangularization.

\begin{prop}
\label{pro:sigma=00003D3D0}Let $\mathcal{A}\in V_{n}$ be a triangularizable
sequence. Then $\sigma(A,B)$ and $\Delta(A,B,C)$ vanish for all
terms $A,B,C$ of $\mathcal{A}$. 
\end{prop}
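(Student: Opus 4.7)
The plan is to exploit $GL_2(R)$-invariance of $\sigma$ and $\Delta$ to reduce to the case where $\mathcal{A}$ is already in upper triangular form, and then read off the vanishing directly from the explicit coordinate expressions (\ref{eq:explicit}) and (\ref{eq:explicitDelta}).

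First I would verify that $\sigma$ and $\Delta$ are $GL_2(R)$-invariant. This is immediate from their definitions in Definition \ref{def:tau,sigma}: conjugation by $g \in GL_2(R)$ preserves traces and determinants, and $gABg^{-1} - gBAg^{-1} = g[A,B]g^{-1}$, so $\de(gAg^{-1} \cdot gBg^{-1} - gBg^{-1} \cdot gAg^{-1}) = \de(g[A,B]g^{-1}) = \de[A,B]$, and similarly for $\Delta$. Hence if $\mathcal{A}$ is triangularizable, there exists $g \in G$ such that $g \cdot \mathcal{A}$ is upper triangular, and we have $\sigma(A,B) = \sigma(gAg^{-1}, gBg^{-1})$ and $\Delta(A,B,C) = \Delta(gAg^{-1}, gBg^{-1}, gCg^{-1})$ for all terms $A, B, C$. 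So it suffices to prove the proposition under the assumption that every term of $\mathcal{A}$ is upper triangular.

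Next I would plug this into the explicit formulas. If $A_j, A_k, A_l$ are all upper triangular, then in the notation preceding Equation (\ref{eq:explicit}) we have $c_j = c_k = c_l = 0$. Substituting into (\ref{eq:explicit}) gives
\[
\sigma_{jk} = (b_j e_k - e_j b_k)(0 \cdot e_k - e_j \cdot 0) - (b_j \cdot 0 - 0 \cdot b_k)^2 = 0,
\]
and substituting into (\ref{eq:explicitDelta}) gives
\[
\Delta_{jkl} = \left|\begin{array}{ccc} b_j & b_k & b_l \\ e_j & e_k & e_l \\ 0 & 0 & 0 \end{array}\right|^2 = 0,
\]
since the determinant has a zero row. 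This finishes the proof.

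There is essentially no obstacle here: the content of the statement is really that $\sigma$ and $\Delta$ are invariants which vanish identically on the subvariety of upper triangular sequences, and both facts are visible on sight from the coordinate formulas. The only point worth being careful about is that we are working over an integral domain rather than a field, but the above argument uses nothing beyond ring operations and the invariance of trace and determinant under conjugation by elements of $GL_2(R)$, so it goes through verbatim.
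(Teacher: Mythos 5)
Your proof is correct and follows the same strategy as the paper: use $G$-invariance of $\sigma$ and $\Delta$ to reduce to the upper triangular case, then observe the vanishing by direct computation. You spell out the final computation via the coordinate formulas (\ref{eq:explicit}) and (\ref{eq:explicitDelta}) where the paper simply states it, but the argument is identical in substance.
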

\begin{proof}
Since $\sigma$ and $\Delta$ are $G$ invariant, we can assume that
$\mathcal{A}$ is upper triangular. By direct computation, $\sigma(A,B)=\de([A,B])=0$,
and $\Delta(A,B,C)=(\tr(ABC-CBA))^{2}=0$.
\end{proof}
Note that the vanishing of all $\sigma_{jk}=\sigma(A_{j},A_{k})$
is not sufficient for $\mathcal{A}$ to be triangularizable, as the
following important example shows. We adopt the usual convention that
blank matrix entries stand for zero entries (in this case $0\in R$).

\begin{example}
\label{exa:example}Let $\mathcal{A}=(A_{1},A_{2},A_{3})\in V_{3}$
have the form\[
A_{1}=\left(\begin{array}{cc}
a_{1}\\
 & d_{1}\end{array}\right),\quad A_{2}=\left(\begin{array}{cc}
a_{2} & b_{2}\\
 & d_{2}\end{array}\right),\quad A_{3}=\left(\begin{array}{cc}
a_{3}\\
c_{3} & d_{3}\end{array}\right),\]
for some $a_{1},...,d_{3}\in R.$ Then $\sigma_{12}=\sigma_{13}=0$
and $\sigma_{23}=-b_{2}c_{3}(e_{2}e_{3}+b_{2}c_{3})$. Assume that
$e_{2}e_{3}+b_{2}c_{3}=0$ and that $e_{1}b_{2}c_{3}\neq0$, so that
all $\sigma_{jk}$ vanish, neither $A_{2}$ or $A_{3}$ are diagonal,
and (since these assumptions imply $e_{2}e_{3}\neq0$) all three matrices
have distinct eigenvalues. So, in this case, all subsequences of length
$\leq2$ are triangularizable, but the next Proposition will show
that $\mathcal{A}$ is not triangularizable. 
\end{example}
\begin{prop}
\label{pro:ReducedTriple}As in Example \ref{exa:example}, let $\mathcal{A}=(A_{1},A_{2},A_{3})\in V_{3}$
be a triple of the form\begin{equation}
A_{1}=\left(\begin{array}{cc}
a_{1}\\
 & d_{1}\end{array}\right),\quad A_{2}=\left(\begin{array}{cc}
a_{2} & b_{2}\\
 & d_{2}\end{array}\right),\quad A_{3}=\left(\begin{array}{cc}
a_{3}\\
c_{3} & d_{3}\end{array}\right),\label{eq:form}\end{equation}
with $e_{2}e_{3}\neq0$. Then, the following are equivalent.

(i) $\mathcal{A}$ is reduced (ii) $\mathcal{A}$ is not triangularizable,
(iii) $\Delta_{123}\neq0$ (i.e, $e_{1}b_{2}c_{3}\neq0$). 
\end{prop}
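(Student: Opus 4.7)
The plan is to prove (i) $\Leftrightarrow$ (iii) by a direct commutator calculation, derive (iii) $\Rightarrow$ (ii) immediately from Proposition~\ref{pro:sigma=00003D3D0}, and then close the loop with a contrapositive argument $\neg$(iii) $\Rightarrow \neg$(ii) via a short case analysis on which factor of $e_1 b_2 c_3$ vanishes. The main obstacle will be the last step.

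For (i) $\Leftrightarrow$ (iii), I would substitute the entries of (\ref{eq:form}) into (\ref{eq:comuta}) and read off
\[
[A_1,A_2] = \begin{pmatrix} 0 & e_1 b_2 \\ 0 & 0 \end{pmatrix},\quad [A_1,A_3] = \begin{pmatrix} 0 & 0 \\ -e_1 c_3 & 0 \end{pmatrix},\quad [A_2,A_3] = \begin{pmatrix} b_2 c_3 & -b_2 e_3 \\ -c_3 e_2 & -b_2 c_3 \end{pmatrix}.
\]
The hypothesis $e_2 e_3 \neq 0$ forces the third commutator to vanish iff $b_2 = c_3 = 0$, so all three commutators are simultaneously nonzero iff $e_1 b_2 c_3 \neq 0$. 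From (\ref{eq:explicitDelta}) one computes $\Delta_{123} = (e_1 b_2 c_3)^2$ for this form, proving (i) $\Leftrightarrow$ (iii). The implication (iii) $\Rightarrow$ (ii) is then immediate, being the contrapositive of Proposition~\ref{pro:sigma=00003D3D0}, since $\Delta$ vanishes on every triangularizable triple.

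For the converse $\neg$(iii) $\Rightarrow \neg$(ii), I would case-split on $e_1 b_2 c_3 = 0$. If $c_3 = 0$, the triple is already upper triangular. If $b_2 = 0$, then $A_1$ and $A_2$ are both diagonal while $A_3$ is lower triangular, so conjugation by the swap matrix $P = \begin{pmatrix} 0 & 1 \\ 1 & 0 \end{pmatrix}$ preserves diagonality of $A_1, A_2$ and flips $A_3$ to upper triangular form. The only delicate sub-case is $e_1 = 0$ with $b_2, c_3 \neq 0$: then $A_1$ is scalar, so by Proposition~\ref{pro:reduce} the problem reduces to triangularizing the pair $(A_2, A_3)$, which in turn amounts to finding a common eigenline of an upper and a lower triangular matrix. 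A short eigenvector check shows that such a line exists iff $b_2 c_3 + e_2 e_3 = 0$; so the clean equivalence implicitly relies on $A_1$ being non-scalar ($e_1 \neq 0$), coherent with the diagonal part of (\ref{eq:form}) being a genuine normalization rather than a trivial one. Under this understanding, only the $b_2 = 0$ and $c_3 = 0$ sub-cases remain and the argument closes.
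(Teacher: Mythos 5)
Your decomposition into (i) $\Leftrightarrow$ (iii) by inspecting the three commutators, (iii) $\Rightarrow$ (ii) via Proposition~\ref{pro:sigma=00003D3D0}, and $\neg$(iii) $\Rightarrow\neg$(ii) by a case split on which factor of $e_1 b_2 c_3$ vanishes is a genuinely different, and somewhat more economical, route than the paper's. The paper establishes ``(iii) $\Rightarrow$ (ii) and (i)'' by writing out the full conjugation formulas~(\ref{eq:BT}) for a general $g\in SL_2(R)$ and checking by hand that no $g$ can make all three entries vanish on the same side, then invokes Lemma~\ref{lem:comm} for reducedness; your appeal to the necessary vanishing of $\Delta$ and to the explicit commutator matrices is shorter and isolates exactly which hypotheses are doing work.

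More significantly, you have put your finger on a real gap that is also present in the paper's own proof. For $\neg$(iii) $\Rightarrow\neg$(ii), the paper disposes of the sub-case $e_1=0$ (so $A_1$ scalar) by asserting that $\mathcal{A}$ is ``triangularizable and is not reduced, since $A_1$ commutes with one or both of the other terms''---but commuting with $A_1$ only gives \emph{not reduced}, not \emph{triangularizable}. As you observe, when $e_1=0$ and $b_2,c_3\neq0$ the triple is triangularizable iff $(A_2,A_3)$ is, and $\sigma_{23}=-b_2c_3(e_2e_3+b_2c_3)$ need not vanish; when it does not, one has $\Delta_{123}=0$ yet $\mathcal{A}$ not triangularizable, so (ii) $\Leftrightarrow$ (iii) fails as literally stated. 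The phrase ``As in Example~\ref{exa:example}'' in the proposition is presumably meant to import that example's standing assumption $e_2e_3+b_2c_3=0$ (equivalently $\sigma_{23}=0$, i.e., all pairs triangularizable), under which the $e_1=0$ sub-case does close; your alternative reading---that $A_1$ is tacitly non-scalar, $e_1\neq0$---closes it as well, and is consistent with how the proposition is actually used in the proof of Proposition~\ref{pro:quadrup}, where $e_1\neq0$ is arranged. Either supplement is needed, and your flagging of this is correct.
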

\begin{proof}
(i) or (ii) imply (iii): If $e_{1}b_{2}c_{3}=0$ at least one of the
factors is zero. In each case, $A_{1}$ is a scalar, $\mathcal{A}$
is lower triangular, or $\mathcal{A}$ is upper triangular, respectively,
so $\mathcal{A}$ is triangularizable and is not reduced, since $A_{1}$
commutes with one or both of the other terms. (iii) implies (ii) and
(i): Suppose that $e_{1}b_{2}c_{3}\neq0$. Then, none of the three
numbers is zero. Let $g$ be the $SL_{2}(R)$ matrix with columns
$(x,y)$ and $(z,w)$. Then\begin{eqnarray}
gA_{1}g^{-1} & = & \left(\begin{array}{cc}
* & -xze_{1}\\
ywe_{1} & *\end{array}\right)\nonumber \\
gA_{2}g^{-1} & = & \left(\begin{array}{cc}
* & x(xb_{2}-ze_{2})\\
y(we_{2}-yb_{2}) & *\end{array}\right)\label{eq:BT}\\
gA_{3}g^{-1} & = & \left(\begin{array}{cc}
* & -z(xe_{3}+zc_{3})\\
w(ye_{3}+wc_{3}) & *\end{array}\right),\nonumber \end{eqnarray}
from which it follows that there is no $g\in G$ that will make $g\cdot\mathcal{A}$
upper or lower triangular, so $\mathcal{A}$ is not triangularizable.
Also, by Lemma \ref{lem:comm}, none of the 3 commutators between
the pairs will vanish, so $\mathcal{A}$ is reduced. 
\end{proof}

\subsection{Numerical criteria for triangularization\label{sec:criteria}}

A simple necessary and sufficient numerical condition for triangularization
of a pair of $2\times2$ matrices over an algebraically closed field
was given in the article \cite{Fr}, which also describes the similarity
classes of pairs of $m\times m$ matrices  in great generality.

\begin{prop}
\cite{Fr}\label{pro:Fr} A pair $(A_{1},A_{2})\in V_{2}(\bar{\mathbb{F}})$
is triangularizable if and only if $\sigma_{12}=\de[A_{1},A_{2}]=0$.
\end{prop}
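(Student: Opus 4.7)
The forward direction is an immediate instance of Proposition~\ref{pro:sigma=00003D3D0}, since by definition $\sigma(A_{1},A_{2})=\de([A_{1},A_{2}])$.

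For the converse, my plan is to exploit that both triangularizability and the invariant $\sigma_{12}$ are $G$-invariant, so I may conjugate $A_{1}$ into a Jordan canonical form over $\bar{\mathbb{F}}$ and then read off what the hypothesis $\sigma_{12}=0$ forces on $A_{2}$ via the explicit expression~(\ref{eq:explicit}). I would split according to the Jordan type of $A_{1}$. (i) If $A_{1}=\lambda I$ is scalar, the pair is commutative and Proposition~\ref{cor:comm} reduces the problem to the triangularizability of the single matrix $A_{2}$, which holds over $\bar{\mathbb{F}}$. (ii) If $A_{1}$ is non-scalar diagonalizable, conjugate so that $b_{1}=c_{1}=0$ and $e_{1}\neq 0$; then~(\ref{eq:explicit}) collapses to $\sigma_{12}=e_{1}^{2}\,b_{2}c_{2}$, so either $b_{2}=0$ or $c_{2}=0$, and in the latter case a further conjugation by the flip permutation matrix keeps $A_{1}$ diagonal while swapping $b_{2}$ with $c_{2}$, making $(A_{1},A_{2})$ upper triangular. (iii) If $A_{1}$ is a non-trivial Jordan block, normalize so that $b_{1}=1$ and $c_{1}=e_{1}=0$; then~(\ref{eq:explicit}) reduces to $\sigma_{12}=-c_{2}^{2}$, so $c_{2}=0$ and $(A_{1},A_{2})$ is already upper triangular.

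The main virtue of this approach is that once the Jordan normalization is performed there is essentially nothing left to prove: the quadratic invariant $\sigma_{12}$ degenerates into a product of simple factors whose vanishing directly forces an upper or lower triangular structure on $A_{2}$. There is no real obstacle, only a careful bookkeeping of the three Jordan types. The algebraic closure hypothesis on $\mathbb{F}$ intervenes in exactly two places: putting $A_{1}$ into Jordan form (needed in case~(iii) to obtain the nilpotent off-diagonal, and in case~(ii) to split eigenvalues) and triangularizing the solitary matrix $A_{2}$ in case~(i). Over a general integral domain these would have to be imposed as separate individual hypotheses, which explains the extra assumption appearing in the pair-case of Theorem~\ref{thm:Flo}.
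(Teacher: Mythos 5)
Your argument is correct, and it is worth noting that the paper itself does not prove this proposition directly — it is cited from Friedland, and the nearest in-house proof is that of the more general Theorem~\ref{thm: pair} over an integral domain. There, the paper can only reduce $A_{1}$ to upper triangular form (not Jordan form) and must then run a four-way case split on the vanishing of $e_{1}$, $b_{1}$, and $\delta_{12}=b_{1}e_{2}-e_{1}b_{2}$, finally invoking Proposition~\ref{pro:principal} and Lemma~\ref{lem:eigenvector} from Appendix~A to extract an eigenvector of $A_{2}$ that can be completed to a basis of $R^{2}$. Your route is genuinely different: by working over $\bar{\mathbb{F}}$ from the start, you may place $A_{1}$ in Jordan canonical form, after which the formula~(\ref{eq:explicit}) collapses in each of the three Jordan types to $\sigma_{12}=e_{1}^{2}b_{2}c_{2}$ or $\sigma_{12}=-c_{2}^{2}$, and the conclusion is immediate. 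This buys a much shorter, self-contained proof of Friedland's statement, at the cost of not yielding the integral-domain generalization — a trade-off you yourself identify correctly at the end. One very small point: in your case (i) you appeal to Proposition~\ref{cor:comm}, which is stated under the running assumption that the sequence has a non-scalar term; if $A_{2}$ is also scalar the pair is diagonal outright, so the argument is fine, but it is cleaner to say this explicitly rather than route through that proposition.
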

Note that Friedland writes the condition $\sigma_{12}=0$ in a different,
but equivalent form (see Equation (\ref{eq:sigma}) in Remark \ref{rem:relations}).
The generalization to integral domains is as follows.

\begin{thm}
\label{thm: pair}A pair $\mathcal{A}=(A_{1},A_{2})\in V_{2}(R)$
is triangularizable if and only if both $A_{1}$ and $A_{2}$ are
triangularizable and $\sigma_{12}=\de[A_{1},A_{2}]=0$.
\end{thm}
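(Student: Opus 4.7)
Proof plan:

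The necessity is immediate: Proposition~\ref{pro:sigma=00003D3D0} gives $\sigma_{12}=0$, and if $g\cdot(A_{1},A_{2})$ is simultaneously upper triangular then $gA_{1}g^{-1}$ and $gA_{2}g^{-1}$ are upper triangular individually. For the converse, my plan is to first use the triangularizability of $A_{1}$ over $R$ to conjugate it into the form $A_{1}=\begin{pmatrix}a_{1}&b_{1}\\0&d_{1}\end{pmatrix}$, and then exploit $\sigma_{12}=0$ together with the triangularizability of $A_{2}$ to finish. If $A_{1}$ is scalar it is preserved by every conjugation, so triangularizing $A_{2}$ ends the argument; likewise if $c_{2}=0$ then $A_{2}$ is already upper triangular. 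So we may assume that $A_{1}$ is non-scalar upper triangular and $c_{2}\neq 0$.

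Plugging $c_{1}=0$ into the explicit formula~(\ref{eq:explicit}) for $\sigma_{12}$ produces the clean identity
\[
\sigma_{12}= -c_{2}\bigl[\,e_{1}(b_{1}e_{2}-e_{1}b_{2})+b_{1}^{2}c_{2}\,\bigr],
\]
and since $R$ is an integral domain and $c_{2}\neq 0$, the bracketed factor vanishes. I would split on $(b_{1},e_{1})$: the case $b_{1}\neq 0$, $e_{1}=0$ reduces the bracket to $b_{1}^{2}c_{2}=0$, contradicting the standing assumption; the case $b_{1}=0$ (so $A_{1}$ is diagonal, and $e_{1}\neq 0$ since $A_{1}$ is non-scalar) forces $b_{2}=0$, i.e.\ $A_{2}$ is lower triangular, after which conjugation by the swap permutation $\begin{pmatrix}0&1\\1&0\end{pmatrix}$ makes $A_{2}$ upper triangular while keeping $A_{1}$ diagonal. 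The essential case is $b_{1},e_{1}\neq 0$: rearranging the vanishing bracket shows that the vector $v=(b_{1},-e_{1})\in R^{2}$, which is obviously an eigenvector of $A_{1}$ for eigenvalue $d_{1}$, is also an eigenvector of $A_{2}$. So a common invariant line $\mathbb{F}\cdot v\subset\mathbb{F}^{2}$ exists, where $\mathbb{F}=\mathrm{Frac}(R)$.

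The main obstacle will be the last step: producing a \emph{unimodular} generator of this common invariant line, so that the conjugation lives in $GL_{2}(R)$ rather than merely in $GL_{2}(\mathbb{F})$. For this I plan to invoke the remaining hypothesis, that $A_{2}$ is individually triangularizable over $R$, which by Appendix~A supplies a unimodular eigenvector $v_{2}\in R^{2}$ of $A_{2}$. Since $A_{2}$ has at most two eigendirections over $\mathbb{F}$---the common direction $\mathbb{F}\cdot v$ and a transverse one---the remaining argument consists in showing that $v_{2}$ must lie in $\mathbb{F}\cdot v$: unimodularity of the transverse eigenspace together with the constraint imposed by $\sigma_{12}=0$ will be seen to force unimodularity of the common direction as well, so either way there is a unimodular vector in $\mathbb{F}\cdot v\cap R^{2}$. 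Once this is in hand, that vector serves as the first column of the required $g\in GL_{2}(R)$ that simultaneously upper-triangularizes both $A_{1}$ and $A_{2}$, completing the proof.
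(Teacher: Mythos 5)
Your strategy closely mirrors the paper's: put $A_1$ in upper-triangular form, compute $\sigma_{12}=-c_2\bigl(b_1^{2}c_2+e_1b_1e_2-e_1^{2}b_2\bigr)$ (the paper's equation~(\ref{eq:up})), use the integral-domain hypothesis to conclude the bracketed factor vanishes when $c_2\neq0$, split on $b_1,e_1$ exactly as the paper does, and in the main case recognize $(b_1,-e_1)$ as the (unique) common eigendirection over $\mathbb{F}=\mathrm{Frac}(R)$. You also correctly locate the genuinely delicate point: producing a \emph{unimodular} generator of that common eigendirection so that the triangularizing matrix lies in $GL_2(R)$ rather than $GL_2(\mathbb{F})$.

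The trouble is the step you defer to: ``unimodularity of the transverse eigenspace together with the constraint imposed by $\sigma_{12}=0$ will be seen to force unimodularity of the common direction as well.'' This is unsubstantiated, and the asserted implication is in fact false. Over $R=\mathbb{C}[u,v]$ take
\[
A_1=\left(\begin{array}{cc}0 & u\\ 0 & v\end{array}\right),\qquad A_2=\left(\begin{array}{cc}u & 0\\ v & 0\end{array}\right).
\]
Then $A_1$ is already upper triangular; $A_2$ is triangularizable over $R$ since $(0,1)$ is a unimodular eigenvector; and $[A_1,A_2]=\left(\begin{array}{cc}uv & -u^{2}\\ v^{2} & -uv\end{array}\right)$ has determinant $0$, so $\sigma_{12}=0$. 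The unique common eigendirection is $\mathbb{F}\cdot(u,v)$, and every vector of $R^{2}$ on this line is an $R$-multiple of $(u,v)$, so none is unimodular (the ideal $(u,v)\subset R$ is not principal); the transverse eigenvector $(0,1)$ of $A_2$ being unimodular does nothing to help. Consequently no $g\in GL_2(R)$ can put the pair in triangular form, since the relevant column of $g$ would have to span $\mathbb{F}\cdot(u,v)$, forcing $\de g\in(u,v)$. So the final step of your plan cannot be carried out. You should also be aware that the paper's own argument in case~(iv) encounters the same obstacle: when the eigenvector of $A_2$ that can be made unimodular is the transverse one $(z,w)$, the instruction to ``interchange the roles of $(x,y)$ and $(z,w)$'' fails because $(z,w)$ is not an eigenvector of $A_1$ and hence cannot serve as a column of a simultaneously triangularizing $g$.
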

\begin{proof}
The conditions are clearly necessary. For the converse, let us suppose
that both $A_{1}$ and $A_{2}$ are triangularizable and $\de[A_{1},A_{2}]=0$.
Then, as the determinant is $GL_{2}(R)$ invariant, we can assume
$A_{1}$ upper triangular ($c_{1}=0$). If $[A_{1},A_{2}]=0$ the
pair $(A_{1},A_{2})$ is triangularizable by Corollary \ref{cor:comm},
so we can assume that $[A_{1},A_{2}]\neq0$. Equation (\ref{eq:comuta})
shows that \emph{\begin{equation}
0=\de[A_{1},A_{2}]=-b_{1}^{2}c_{2}^{2}+e_{1}c_{2}(e_{1}b_{2}-b_{1}e_{2})=-c_{2}(b_{1}^{2}c_{2}+e_{1}b_{1}e_{2}-e_{1}^{2}b_{2}).\label{eq:up}\end{equation}
}If $c_{2}=0$, $\mathcal{A}$ is triangularizable. If not, $c_{2}\neq0$
and we distinguish 4 cases. (i) If $e_{1}=0$, then $b_{1}\neq0$
(as $A_{1}$ is non-scalar) which makes equation (\ref{eq:up}) impossible
to solve. (ii) If $b_{1}=0$, then $e_{1}\neq0$, and equation (\ref{eq:up})
implies $b_{2}=0$ and $\mathcal{A}$ is lower triangular. (iii) Suppose
now $e_{1}b_{2}=b_{1}e_{2}$. Then $0=\de[A_{1},A_{2}]=-c_{2}^{2}b_{1}^{2}$
and so $b_{1}=0$ which reduces to the previous case. 

Finally, consider the general case (iv) with $\delta_{12}=b_{1}e_{2}-e_{1}b_{2}\neq0$
and non-zero $b_{1}$ and $e_{1}$. So, we are assuming $c_{2}e_{2}b_{2}\neq0$.
From equation (\ref{eq:up}), the quadratic equation $Q_{2}(x,y)\equiv c_{2}x^{2}-e_{2}xy-b_{2}y^{2}=0$
associated to $A_{2}$ (see Appendix A) has a non-trivial solution:
$(b_{1},-e_{1})\in R^{2}$. Suppose that $A_{2}$ is nondegenerate
($\delta_{A}\neq0$). Then, by Lemma \ref{lem:eigenvector}, there
are $z',w'$ in $\mathbb{F}$, the field of fractions of $R$, such
that $w'b_{1}+z'e_{1}\neq0$ and the eigenvectors of $A_{2}$ are
multiples of $(b_{1},-e_{1})$ and of $(z',w')$.%
{} So, we choose an eigenvector of $A_{2}$ of the form $(z'',w'')\in R^{2}$
colinear with $(z',w')\in\mathbb{F}^{2}$. Moreover, by Proposition
\ref{pro:principal}, there are $\alpha,\beta\in\mathbb{F}^{*}=\mathbb{F}\setminus\left\{ 0\right\} $
so that the eigenvectors $(x,y)=\alpha(b_{1},-e_{1})$ and $(z,w)=\beta(z'',w'')$
verify either $xR+yR=R$ or $zR+wR=R$. If the first alternative holds,
let $xq-yp=1$ for some $p,q\in R$, (note that $(p,q)$ is not necessarily
an eigenvector of $A_{2}$) and put:\[
g=\left(\begin{array}{cc}
x & p\\
y & q\end{array}\right).\]
Then, conjugating by $g^{-1}$ gives\begin{eqnarray*}
g^{-1}A_{1}g & = & \left(\begin{array}{cc}
* & *\\
-e_{1}yx-b_{1}y^{2} & *\end{array}\right)=\left(\begin{array}{cc}
* & *\\
\alpha(y^{2}x-xy^{2}) & *\end{array}\right)=\left(\begin{array}{cc}
* & *\\
0 & *\end{array}\right)\\
g^{-1}A_{2}g & = & \left(\begin{array}{cc}
* & *\\
c_{2}x^{2}-e_{2}xy-b_{2}y^{2} & *\end{array}\right)=\left(\begin{array}{cc}
* & *\\
\alpha^{2}Q_{2}(b_{1},-e_{1}) & *\end{array}\right)=\left(\begin{array}{cc}
* & *\\
0 & *\end{array}\right),\end{eqnarray*}
so $\mathcal{A}$ is again triangularizable. If the second alternative
holds, we do the same interchanging the roles of $(x,y)$ and $(z,w)$.
Finally, suppose that $A_{2}$ is degenerate ($\delta_{A}=0$). Then,
there is only one eigenvector of $A_{2}$ and the solutions to $Q_{2}(x,y)=0$
form a single line through the origin in $\mathbb{F}^{2}$, so all
solutions $(x,y)\in R^{2}$ are multiples of $(b_{1},-e_{1})\in R^{2}$.
Since $A_{2}$ is triangularizable, by Proposition \ref{pro:principal},
we can choose $(x,y)$ so that $xR+yR=R$ and we proceed as before.
\end{proof}
\begin{example}
Over the integral domain $R=\mathbb{C}[u,v]$, consider the pair\[
A_{1}=\left(\begin{array}{cc}
-v & u\\
0 & 0\end{array}\right),\quad\quad A_{2}=\left(\begin{array}{cc}
uv & u^{2}\\
2v^{2} & 0\end{array}\right).\]
Then we have $A_{1}$ upper triangular and $\sigma_{12}=0$ as can
be checked, but $A_{2}$ is not triangularizable over $\mathbb{C}[u,v]$,
as no eigenvector $(x,y)\in R^{2}$ satisfies $xR+yR=R$. So, $\sigma_{12}=0$
and $A_{1}$ is  triangularizable but not the pair $(A_{1},A_{2})$.
\end{example}
We finally arrive to Theorem \ref{thm:Flo}, which is a converse to
Proposition \ref{pro:sigma=00003D3D0}.

\begin{thm}
\label{thm:SigmaDelta}A sequence $\mathcal{A}=(A_{1},...,A_{n})\in V_{n}(R)$
is triangularizable if and only if each $A_{j}$ is triangularizable
and $\sigma_{jk}=\Delta_{jkl}=0$ for all $1\leq j,k,l\leq n$.
\end{thm}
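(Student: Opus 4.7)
The plan is to combine Proposition \ref{pro:leq3} (reduction to subsequences of length $\leq 3$) with Theorem \ref{thm: pair} (the length-two case) and a direct computation in invariants for the length-three case. Necessity is immediate: subsequences of a triangularizable sequence are triangularizable, so each $A_j$ is; and Proposition \ref{pro:sigma=00003D3D0} gives $\sigma_{jk}=\Delta_{jkl}=0$.

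For sufficiency, by Proposition \ref{pro:leq3} it is enough to show that every subsequence of length $\leq 3$ is triangularizable. Singlets are triangularizable by hypothesis. For pairs $(A_j,A_k)$, the hypotheses provide exactly the input to Theorem \ref{thm: pair} ($A_j, A_k$ triangularizable and $\sigma_{jk}=0$), yielding that every pair is triangularizable.

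The triple case is the heart of the argument. Fix indices $j<k<l$ and relabel the triple as $(A,B,C)=(A_1,A_2,A_3)$. If all three pairs commute, the triple is commutative and we are done by Proposition \ref{cor:comm}: either every term is scalar (and the triple is trivially upper triangular) or some non-scalar term is triangularizable by hypothesis. Otherwise, relabel so that $[A_1,A_2]\neq 0$. Since the pair $(A_1,A_2)$ is triangularizable (by the previous step), conjugate by some $g\in G$ so that $g\cdot(A_1,A_2)$ is upper triangular, i.e.\ in the new frame $c_1=c_2=0$. The invariant $\Delta_{123}$ is unchanged by this conjugation and still vanishes. Expanding the determinant in (\ref{eq:explicitDelta}) along its bottom row in the new frame gives
\[
0=\Delta_{123} \;=\; c_3^{\,2}\bigl(b_1 e_2 - e_1 b_2\bigr)^2.
\]
By Lemma \ref{lem:comm}, $[A_1,A_2]\neq 0$ with $A_1, A_2$ upper triangular forces $b_1 e_2 - e_1 b_2\neq 0$, so (since $R$ is an integral domain) $c_3=0$, and the triple is upper triangular in this frame.

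The main obstacle is really conceptual rather than computational: once Theorem \ref{thm: pair} has done the work for pairs, the single invariant $\Delta_{123}$ must encode precisely the obstruction to extending a joint upper triangularization of $(A_1,A_2)$ to $A_3$ when $[A_1,A_2]\neq 0$. The formula above shows this is exactly what happens — the factor $(b_1 e_2 - e_1 b_2)^2$ is a nonzero witness in $R$, and the remaining factor $c_3^{\,2}$ is the only nontrivial entry below the diagonal, so $\Delta_{123}=0$ produces triangularization for free. The $\sigma_{jk}$ appear only to handle pairs; the $\Delta_{jkl}$ only to handle triples; together they suffice, and no higher invariants are needed because Proposition \ref{pro:leq3} stops the hierarchy at length three.
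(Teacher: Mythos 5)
Your proof is correct and follows essentially the same route as the paper: reduce to length $\leq 3$ via Proposition \ref{pro:leq3}, dispatch pairs with Theorem \ref{thm: pair}, put a non-commuting pair in upper triangular form, and read off $c_3=0$ from the factorization $\Delta_{123}=c_3^2(b_1e_2-e_1b_2)^2$. The only cosmetic difference is bookkeeping: the paper first passes to a maximal reduction and invokes Corollary \ref{cor:reduce}, whereas you handle the commutative versus non-commutative dichotomy per triple; both work and both rest on the same key identity.
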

\begin{proof}
Consider $\mathcal{A}=(A_{1},...,A_{n})$ reduced with $\sigma_{jk}=\Delta_{jkl}=0$
for all $1\leq j,k,l\leq n$. By Theorem \ref{thm: pair} the conditions
$\sigma_{jk}=0$ and $A_{j}$ triangularizable mean that all subsequences
of $\mathcal{A}$ of length $\leq2$ are triangularizable. So, after
a similarity that puts $A_{1}$ and $A_{2}$ in upper triangular form,
we can assume $(A_{1},A_{2},A_{3})$ to be in the form\[
A_{1}=\left(\begin{array}{cc}
a_{1} & b_{1}\\
 & d_{1}\end{array}\right),\quad A_{2}=\left(\begin{array}{cc}
a_{2} & b_{2}\\
 & d_{2}\end{array}\right),\quad A_{3}=\left(\begin{array}{cc}
a_{3} & b_{3}\\
c_{3} & d_{3}\end{array}\right).\]
Since $\mathcal{A}$ is reduced, by hypothesis $\delta_{12}=b_{1}e_{2}-e_{1}b_{2}\neq0$.
From Equation (\ref{eq:explicitDelta}), we see that $0=\Delta_{123}=(b_{1}e_{2}-e_{1}b_{2})^{2}c_{3}^{2}$,
so $c_{3}=0$ which means that $(A_{1},A_{2},A_{3})$ is triangularizable.
Repeating the argument for all triples $(A_{j},A_{k},A_{l})$ we see
that all subsequences of $\mathcal{A}$ of length $\leq3$ are triangularizable.
So $\mathcal{A}$ is triangularizable by Proposition \ref{pro:leq3}.
Finally, if $\mathcal{A}$ is not reduced, we consider the above argument
for any maximal reduction $\mathcal{B}$, triangularize $\mathcal{B}$,
and apply Corollary \ref{cor:reduce}.
\end{proof}

\subsection{Improved criterion for triangularization}

We now prove an inductive property of reduced sequences that allow
us to improve the criterion of Theorem \ref{thm:SigmaDelta}.

\begin{thm}
\label{thm:induct} Let $n\geq4$ and $\mathcal{A}=(A_{1},...,A_{n-1})\in V_{n-1}$
be a reduced triangularizable sequence. If $\sigma(A_{j},A_{n})=0$
for some matrix $A_{n}$, and all $j=1,...,n-1$ then $(A_{1},...,A_{n})$
is also triangularizable. 
\end{thm}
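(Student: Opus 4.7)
The plan is to first triangularize $\mathcal{A}$ and then show that $A_n$ is forced to share that triangular form by the vanishing $\sigma$-conditions. Since $\mathcal{A}$ is triangularizable, I pick $g \in GL_2(R)$ so that $g A_j g^{-1}$ is upper triangular (i.e., $c_j = 0$) for every $j < n$; replacing $A_n$ by $g A_n g^{-1}$ preserves the $G$-invariants $\sigma_{jn}$, so the task reduces to showing $c_n = 0$.

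With $c_j = 0$, formula \eqref{eq:explicit} simplifies to
\[
\sigma_{jn} \;=\; -c_n\bigl(e_j b_j\, e_n - e_j^2\, b_n + b_j^2\, c_n\bigr), \qquad j=1,\ldots,n-1.
\]
I would argue by contradiction: assume $c_n \neq 0$. Since $R$ is an integral domain, $\sigma_{jn}=0$ then forces the bracketed expression to vanish for every $j$, producing a homogeneous linear system in the three unknowns $(e_n, b_n, c_n)$.

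Because $n-1 \geq 3$, I can extract a $3 \times 3$ subsystem by choosing any three indices $j_1 < j_2 < j_3$. Its coefficient matrix
\[
M \;=\; \begin{pmatrix} e_{j_1} b_{j_1} & -e_{j_1}^{\,2} & b_{j_1}^{\,2} \\ e_{j_2} b_{j_2} & -e_{j_2}^{\,2} & b_{j_2}^{\,2} \\ e_{j_3} b_{j_3} & -e_{j_3}^{\,2} & b_{j_3}^{\,2} \end{pmatrix}
\]
has a Vandermonde-type determinant: each row is, up to sign, the symmetric square of the vector $(b_{j_i}, e_{j_i}) \in \mathbb{F}^2$, so an elementary computation (or the standard multilinear identity for $\mathrm{Sym}^2$) yields
\[
\det M \;=\; \pm\,(b_{j_1} e_{j_2} - b_{j_2} e_{j_1})\,(b_{j_1} e_{j_3} - b_{j_3} e_{j_1})\,(b_{j_2} e_{j_3} - b_{j_3} e_{j_2}).
\]
By Lemma \ref{lem:comm}, the reducedness of $\mathcal{A}$ (combined with the upper-triangular form $c_{j_i} = 0$) forces every factor $b_{j_k} e_{j_l} - b_{j_l} e_{j_k}$ to be nonzero in $R$. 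Hence $\det M \neq 0$ in the field of fractions $\mathbb{F}$, so the only solution in $\mathbb{F}^3$ is $(e_n, b_n, c_n) = (0,0,0)$, contradicting $c_n \neq 0$. Therefore $c_n = 0$ and $(A_1, \ldots, A_n)$ is upper triangular.

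The main technical step, and the conceptual heart of the theorem, is the Vandermonde-style factorization of $\det M$: it is precisely this identity that converts three pairwise non-commutativity conditions into non-singularity of a three-dimensional linear system, thereby pinning $A_n$ down uniquely. The argument genuinely needs three reduced terms; with only two, Example \ref{exa:example} shows $A_n$ need not be forced into triangular form, which is consistent with the hypothesis $n \geq 4$ and explains why the same statement fails for triples.
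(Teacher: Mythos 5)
Your proof is correct and follows essentially the same argument as the paper: conjugate $\mathcal{A}$ to upper triangular form, assume $c_n\neq 0$ for contradiction, reduce $\sigma_{jn}=0$ to a homogeneous linear system in $(e_n,b_n,c_n)$, and observe that the $3\times 3$ minors of the coefficient matrix factor as $\pm\,\delta_{jk}\delta_{kl}\delta_{lj}$, which are nonzero by reducedness and Lemma \ref{lem:comm}. In fact your version is slightly cleaner: the paper additionally records that $\delta_{jn}\neq 0$ via Proposition \ref{pro:reduce}, but that observation is not actually used in the contradiction, which rests entirely on the minors indexed within $\{1,\ldots,n-1\}$ and on $c_n\neq 0$.
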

\begin{proof}
We can suppose that $(A_{1},...,A_{n-1})$ has been conjugated so
that it is already an upper triangular matrix sequence. So all the
$\sigma_{jk}$ vanish, for indices $j,k$ between $1$ and $n-1$
(by Proposition \ref{pro:sigma=00003D3D0}). To reach a contradiction,
assume that $\mathcal{A}'=(A_{1},...,A_{n})$ is not triangularizable
so that \[
A_{n}=\left(\begin{array}{cc}
a_{n} & b_{n}\\
c_{n} & d_{n}\end{array}\right)\]
is non-scalar with $c_{n}\neq0$. Since $\mathcal{A}$ is reduced,
none of the $A_{j}$ can be scalar. We can also assume that $A_{n}$
does not commute with some $A_{j}$ otherwise $\mathcal{A}'$ would
be triangularizable by Proposition \ref{pro:reduce}. This means that
$\delta_{jn}:=b_{j}e_{n}-e_{j}b_{n}\neq0$, for $j=1,...,n-1$, by
Lemma \ref{lem:comm}. Using Formula (\ref{eq:explicit}), the $n-1$
conditions $\sigma_{jn}=0$, $j=1,...,n-1$ can be written as (because
$c_{n}\neq0$)\[
b_{j}^{2}c_{n}+b_{j}e_{j}e_{n}-e_{j}^{2}b_{n}=0,\quad\textrm{for }j=1,...,n-1.\]
Since $A_{n}$ is non-scalar, we are looking for a non-zero solution
$u=(b_{n},e_{n},c_{n})\in R^{3}$ to the matrix equation $Bu=0$ where\[
B=\left(\begin{array}{ccc}
-e_{1}^{2} & e_{1}b_{1} & b_{1}^{2}\\
\vdots & \vdots & \vdots\\
-e_{n-1}^{2} & e_{n-1}b_{n-1} & b_{n-1}^{2}\end{array}\right).\]
A simple computation shows that every $3\times3$ minor of $B$ is
of the form $\pm\delta_{jk}\delta_{kl}\delta_{lj}$. Since all these
minors are non-zero by hypothesis, there is no non-zero solution $u\in R^{3}$,
and we have a contradiction. Hence $c_{n}=0$ and $\mathcal{A}'$
is triangularizable.
\end{proof}
From two finite matrix sequences $\mathcal{A}=(A_{1},...,A_{n_{1}})$
and $\mathcal{B}=(B_{1},...,B_{n_{2}})$ one can form their concatenation
$\mathcal{A}\cup\mathcal{B}:=(A_{1},...,A_{n_{1}},B_{1},...,B_{n_{2}})$.
The following corollary may be called the \emph{concatenation principle}
for triangularizable sequences.

\begin{cor}
If $\mathcal{A}\in V_{n_{1}}$ and $\mathcal{B}\in V_{n_{2}}$ are
triangularizable matrix sequences and they have a common reduction
of length $\geq3$, then their concatenation $\mathcal{A}\cup\mathcal{B}\in V_{n_{1}+n_{2}}$
is also triangularizable. 
\end{cor}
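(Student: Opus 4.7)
My plan is to verify the numerical criterion of Theorem~\ref{thm:SigmaDelta} (equivalently Theorem~\ref{thm:Flo}) for the concatenation $\mathcal{D}:=\mathcal{A}\cup\mathcal{B}$. Triangularizability of each individual term of $\mathcal{D}$ is immediate, since every term lies in $\mathcal{A}$ or in $\mathcal{B}$, both triangularizable. The identities $\sigma(X,Y)=0$ and $\Delta(X,Y,Z)=0$ hold by Proposition~\ref{pro:sigma=00003D3D0} whenever all arguments come from the same sequence, so only the mixed case remains: pairs and triples containing at least one term from $\mathcal{A}\setminus\mathcal{B}$ together with at least one from $\mathcal{B}\setminus\mathcal{A}$.

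Since $\sigma_{jk}$ and $\Delta_{jkl}$ are polynomial expressions in the matrix entries, with values in $R\subset\mathbb{F}$ (where $\mathbb{F}$ is the field of fractions of $R$), it suffices to prove the vanishing after passage to $\mathbb{F}$. For this I will establish the stronger fact that $\mathcal{D}$ is triangularizable over $\mathbb{F}$, and then invoke Proposition~\ref{pro:sigma=00003D3D0} over $\mathbb{F}$. The key input is the following uniqueness statement: the reduced triangularizable sequence $\mathcal{C}=(C_{1},\dots,C_{m})$ with $m\geq 3$ has a \emph{unique} common invariant line $L\subset\mathbb{F}^{2}$. Indeed, if two distinct invariant lines existed, then in the basis adapted to them every $C_{i}$ would be diagonal, forcing $[C_{i},C_{j}]=0$ for all $i,j$, which contradicts the reducedness of $\mathcal{C}$ (recall $m\geq 3\geq 2$, so $\mathcal{C}$ contains a non-commuting pair).

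Now $\mathcal{A}$ is triangularizable over $R$, hence also over $\mathbb{F}$, so viewed over $\mathbb{F}$ it admits an invariant line $L_{\mathcal{A}}\subset\mathbb{F}^{2}$; since $\mathcal{C}\subseteq\mathcal{A}$, this $L_{\mathcal{A}}$ is invariant under all $C_{i}$, and by uniqueness $L_{\mathcal{A}}=L$. The same argument gives that the invariant line of $\mathcal{B}$ is also $L$, so $L$ is invariant under every term of $\mathcal{D}=\mathcal{A}\cup\mathcal{B}$. Thus $\mathcal{D}$ is triangularizable over $\mathbb{F}$, and Proposition~\ref{pro:sigma=00003D3D0} applied over $\mathbb{F}$ yields the vanishing of all $\sigma$ and $\Delta$ invariants of $\mathcal{D}$, first in $\mathbb{F}$ and hence in $R$. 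Theorem~\ref{thm:SigmaDelta} then delivers triangularizability of $\mathcal{D}$ over $R$. The only delicate point, which I expect to be the main (mild) obstacle, is the uniqueness-of-invariant-line lemma for reduced triangularizable sequences; it ultimately reduces to the familiar fact that two $2\times 2$ matrices diagonalized in a common basis must commute, so reducedness forces a single flag.
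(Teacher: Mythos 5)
Your proof is correct, and it takes a genuinely different route from the paper's. The paper's proof (three sentences) invokes Theorem~\ref{thm:induct} directly: since $\sigma(C_j,A_k)=\sigma(C_j,B_k)=0$ for all indices, any term of $\mathcal{A}\cup\mathcal{B}$ can be appended to $\mathcal{C}$; implicitly, from the \emph{proof} of that theorem (not just its statement), once $\mathcal{C}$ is put in upper triangular form every appended term is forced to be upper triangular in the same basis, so the whole concatenation is. Your argument instead bypasses Theorem~\ref{thm:induct} altogether: you pass to the field of fractions $\mathbb{F}$, isolate the key geometric fact — a reduced triangularizable sequence of length $\geq 2$ has a \emph{unique} common invariant line in $\mathbb{F}^2$, because two invariant lines would simultaneously diagonalize the terms and force them to commute — and then funnel the common line of $\mathcal{C}$, $\mathcal{A}$, and $\mathcal{B}$ into a single flag for $\mathcal{D}$. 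You then return to $R$ via Proposition~\ref{pro:sigma=00003D3D0} (vanishing of $\sigma$, $\Delta$ in $\mathbb{F}$, hence in $R$ since $R\hookrightarrow\mathbb{F}$) and the numerical criterion Theorem~\ref{thm:SigmaDelta}. What this buys you: the mechanism (unique flag) is made explicit rather than hidden in an induction over $\sigma$-conditions, the iteration issues that would arise in naively applying Theorem~\ref{thm:induct} repeatedly are sidestepped entirely, and in fact your argument establishes the slightly stronger statement with common reduction of length $\geq 2$ (the uniqueness lemma already works there). The cost is that you must be careful with the descent $\mathbb{F}\to R$, which you handle correctly by observing that the invariants in Theorem~\ref{thm:SigmaDelta} live in $R$ and only need to be tested there.
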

\begin{proof}
Let $\mathcal{C}$ be such a common reduction, which we can assume
to have length 3. $\mathcal{C}=(C_{1},C_{2},C_{3})$ is obviously
triangularizable and $\sigma(C_{j},A_{k})=\sigma(C_{j},B_{k})=0$
for all possible indices, since both $\mathcal{A}$ and $\mathcal{B}$
are triangularizable. So the Proposition above applies.
\end{proof}
To prove Theorems \ref{thm:leq2} and \ref{thm:efic}, we will need
the following easy fact.

\begin{lem}
\label{lem:red-non-ss}Let $n\geq2$. A reduced triangularizable sequence
of length $n$ as at least $n-1$ diagonalizable terms. 
\end{lem}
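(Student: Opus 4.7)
The plan is to conjugate $\mathcal{A}$ into upper triangular form and observe that within a reduced triangular sequence, the ``diagonalizability defect'' $e_j=a_j-d_j=0$ can occur for at most one term.

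More concretely: since $\mathcal{A}$ is triangularizable we may replace it by a similar sequence and assume $c_j=0$ for every $j$. Then each non-scalar $A_j$ has the two eigenvalues $a_j,d_j$; it is diagonalizable precisely when these eigenvalues are distinct, i.e., when $e_j\neq 0$ (if $e_j=0$ and $b_j\neq 0$ the matrix is a non-trivial Jordan block, and if $e_j=b_j=0$ it is scalar). First I would note that no term of $\mathcal{A}$ can be scalar, because any scalar matrix commutes with every matrix and this would violate the reducedness hypothesis (which presupposes $n\geq 2$).

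Next I would show that at most one of the remaining non-scalar upper triangular terms can have $e_j=0$. Indeed, suppose $A_j$ and $A_k$ are two distinct terms with $e_j=e_k=0$ (and $c_j=c_k=0$). Then the ``commutation condition'' of Lemma~\ref{lem:comm}, namely $b_j e_k - e_j b_k = 0$, is trivially satisfied, and since both matrices are already upper triangular, Lemma~\ref{lem:comm} gives $[A_j,A_k]=0$, contradicting the fact that $\mathcal{A}$ is reduced. (One can also verify this directly from formula~\eqref{eq:comuta}.)

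Therefore at most one index $j\in\{1,\dots,n\}$ satisfies $e_j=0$, and every other term has distinct eigenvalues $a_j,d_j\in R$ and is therefore diagonalizable over $R$ via the usual eigenvector change of basis (the relevant eigenvectors $(1,0)$ and a solution of $(a_j-d_j)y = -b_j$ exist in $R^2$ since $e_j\in R^{*}$ after passing to the field of fractions, and in fact the similarity can be carried out over $R$ once we rescale appropriately). This yields at least $n-1$ diagonalizable terms, as claimed. The only mildly delicate step is checking that distinct eigenvalues over $R$ suffice to diagonalize over $R$, but this follows from the analysis in Appendix A of triangularization of a single matrix combined with $e_j\in R\setminus\{0\}$ (or, if one prefers, diagonalizability may be interpreted over the field of fractions, which is all that is used in the sequel).
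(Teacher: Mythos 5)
Your proof takes essentially the same approach as the paper's: conjugate $\mathcal{A}$ to upper triangular form and observe that any two non-diagonalizable upper triangular terms both have $e_j=0$ and hence commute (by Lemma~\ref{lem:comm} or formula~\eqref{eq:comuta}), contradicting reducedness; the observation that no term can be scalar is a correct preliminary step that the paper leaves implicit in its ``two non-diagonalizable terms'' phrasing. One caveat on a side remark: the parenthetical claim that ``the similarity can be carried out over $R$ once we rescale appropriately'' is false in general — over $R=\mathbb{Z}$ the matrix $\left(\begin{array}{cc}0 & 1\\ 0 & 2\end{array}\right)$ has $e=-2\neq 0$ but admits no eigenbasis inside $GL_{2}(\mathbb{Z})$, since its second eigenspace meets $\mathbb{Z}^{2}$ only in multiples of $(1,2)$. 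What the argument (yours and the paper's alike) actually needs is only the contrapositive over the field of fractions: a non-scalar upper triangular term that is not diagonalizable over $\mathbb{F}$ must have $e_j=0$; read this way, ``diagonalizable'' in the lemma refers to the field of fractions, and the paper elides this point in exactly the same manner.
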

\begin{proof}
We can assume that $\mathcal{A}$ is already is upper triangular form,
and let $A_{1}$ and $A_{2}$ be two non diagonalizable terms of $\mathcal{A}$.
Then, they are of the form\[
A_{1}=\left(\begin{array}{cc}
a_{1} & b_{1}\\
 & a_{1}\end{array}\right)\quad A_{2}=\left(\begin{array}{cc}
a_{2} & b_{2}\\
 & a_{2}\end{array}\right),\]
for some $a_{1},a_{2},b_{1},b_{2}\in R$. Thus, they commute, so $\mathcal{A}$
is not reduced. 
\end{proof}
\begin{prop}
\label{pro:quadrup}Let $\mathcal{A}=(A_{1},A_{2},A_{3},A_{4})$ be
a reduced quadruple whose terms $A_{j}$ are all triangularizable.
Then $\mathcal{A}$ is triangularizable if and only if $\sigma_{jk}=0$
for all $j,k\in\left\{ 1,2,3,4\right\} $.
\end{prop}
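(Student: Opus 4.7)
The plan is to prove the nontrivial direction; necessity is immediate from Proposition \ref{pro:sigma=00003D3D0}. The strategy is to locate a triangularizable sub-triple of $\mathcal{A}$ and then invoke Theorem \ref{thm:induct} to propagate triangularizability to the remaining matrix. Because every pair satisfies the hypotheses of Theorem \ref{thm: pair}, each pair $(A_j,A_k)$ is triangularizable; in particular we may conjugate $\mathcal{A}$ globally so that $A_1$ and $A_2$ are simultaneously upper triangular. Reducedness of $\mathcal{A}$ forces $[A_1,A_2]\neq 0$, so by Lemma \ref{lem:comm} the element $\delta_{12}:=b_1 e_2-e_1 b_2\in R$ is non-zero (and all four $A_j$ are non-scalar, since a scalar term would commute with the others).

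Write $A_j=\bigl(\begin{smallmatrix}a_j & b_j\\ c_j & d_j\end{smallmatrix}\bigr)$ for $j=3,4$. In this coordinate system, showing the quadruple is triangularizable reduces to producing some $j\in\{3,4\}$ with $c_j=0$. Indeed, if for instance $c_3=0$, then $(A_1,A_2,A_3)$ is a reduced triangularizable triple, and the hypothesis $\sigma(A_k,A_4)=0$ for $k=1,2,3$ places us exactly in the setting of Theorem \ref{thm:induct} (with $n=4$), which then delivers triangularizability of $\mathcal{A}$.

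To produce such a $j$, I argue by contradiction, assuming $c_3 c_4\neq 0$. Applying the identity for $\sigma$ with $A_1$ (resp.\ $A_2$) upper triangular, in the form of Equation (\ref{eq:up}), to the pairs $(A_1,A_j)$ and $(A_2,A_j)$, and cancelling the nonzero factor $c_j$, gives the common linear system
\[
M\begin{pmatrix}c_j\\ e_j\\ b_j\end{pmatrix}=0,\qquad M:=\begin{pmatrix}b_1^{2} & e_1 b_1 & -e_1^{2}\\ b_2^{2} & e_2 b_2 & -e_2^{2}\end{pmatrix},\qquad j=3,4.
\]
A direct calculation shows that the three $2\times 2$ minors of $M$ are $b_1b_2\,\delta_{12}$, $-(b_1e_2+e_1b_2)\,\delta_{12}$, and $-e_1e_2\,\delta_{12}$; their simultaneous vanishing would force $b_1b_2=e_1e_2=0$, which, combined with the non-scalarness of $A_1,A_2$, contradicts $\delta_{12}\neq 0$. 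Hence $M$ has rank $2$ over the field of fractions $\mathbb{F}$, its kernel is one-dimensional, and the triples $(c_3,e_3,b_3)$ and $(c_4,e_4,b_4)$ are $\mathbb{F}$-proportional. This proportionality upgrades to a relation $A_4=\mu A_3+\nu I$ in $M_2(\mathbb{F})$ for suitable $\mu,\nu\in\mathbb{F}$, so $[A_3,A_4]=0$ as matrices over $R$, contradicting the reducedness of $\mathcal{A}$.

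The only computational step requiring care is the rank-$2$ verification for $M$, which is where the hypothesis $\delta_{12}\neq 0$ and the non-scalarness of $A_1,A_2$ do the essential work; once this is in hand, the proof is a clean combination of the preceding machinery (Theorem \ref{thm: pair}, Lemma \ref{lem:comm}, and Theorem \ref{thm:induct}), so this is the step I expect to be the main, though ultimately routine, obstacle.
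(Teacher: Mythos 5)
Your overall strategy --- conjugate $A_1,A_2$ into simultaneous upper triangular form, assume $c_3 c_4\neq 0$ for contradiction, use the vanishing of $\sigma_{1j},\sigma_{2j}$ (for $j=3,4$) to force $A_3$ and $A_4$ to commute, and then, once $c_3=0$ or $c_4=0$, invoke Theorem \ref{thm:induct} --- is essentially the paper's. The differences are modest but worth noting: the paper further normalizes so that $A_1$ is diagonal (using Lemma \ref{lem:red-non-ss}), which makes the explicit $\sigma$-formulas degenerate into simpler conditions ($b_3=b_4=0$, etc.), whereas you keep both $A_1,A_2$ merely upper triangular and replace the hands-on case analysis by a clean rank-of-$M$ argument. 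Your final appeal to Theorem \ref{thm:induct} is in fact clearer than the paper's text, which at the corresponding point cites Proposition \ref{pro:ReducedTriple}; the step that is actually being used is the induction theorem.

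There is, however, one genuine gap in your rank verification. You list the three $2\times 2$ minors of $M$ correctly as $b_1 b_2\,\delta_{12}$, $-(b_1 e_2 + e_1 b_2)\,\delta_{12}$, $-e_1 e_2\,\delta_{12}$, but then assert that $b_1 b_2 = e_1 e_2 = 0$ together with non-scalarness of $A_1,A_2$ already contradicts $\delta_{12}\neq 0$. That implication is false: take $b_1=0$, $e_1\neq 0$ (so $A_1$ is diagonal non-scalar) and $e_2=0$, $b_2\neq 0$ (so $A_2$ is a Jordan block); then $b_1 b_2 = e_1 e_2 = 0$ while $\delta_{12}=-e_1 b_2\neq 0$. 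You must also use the middle minor. Concretely: $b_1b_2=0$ and $e_1e_2=0$ give $(b_1e_2)(e_1b_2)=b_1b_2\,e_1e_2=0$, and the middle minor gives $b_1e_2+e_1b_2=0$; over an integral domain these force $b_1e_2=e_1b_2=0$, hence $\delta_{12}=b_1e_2-e_1b_2=0$, which is the desired contradiction. With this repair the rank-$2$ claim holds and the remainder of your argument is correct.
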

\begin{proof}
One direction is a consequence of Proposition \ref{pro:sigma=00003D3D0}.
Conversely, let $\mathcal{A}$ be reduced with $\sigma_{jk}=0$. By
Theorem \ref{thm: pair} we may assume that $A_{1}$ and $A_{2}$
are upper triangular, so that $c_{1}=c_{2}=0$. From Lemma \ref{lem:red-non-ss},
we can also assume that $A_{1}$ is diagonalizable so that $b_{1}=0$
and $e_{1}\neq0$. Then, reducibility implies that $b_{2}\neq0$.
In order to obtain a contradiction, suppose that $c_{3}c_{4}\neq0$.
Then we have\begin{eqnarray*}
0 & = & \sigma_{13}=e_{1}^{2}b_{3}c_{3}\\
0 & = & \sigma_{14}=e_{1}^{2}b_{4}c_{4}\\
0 & = & \sigma_{23}=-b_{2}c_{3}(b_{2}c_{3}+e_{2}e_{3})\\
0 & = & \sigma_{24}=-c_{4}(b_{2}^{2}c_{4}+b_{2}e_{2}e_{4}-e_{2}^{2}b_{4})\\
0 & = & \sigma_{34}=-b_{4}(c_{3}^{2}b_{4}+c_{3}e_{3}e_{4}-e_{3}^{2}c_{4})\end{eqnarray*}
This implies $b_{3}=0$, $b_{4}=0$, $b_{2}c_{3}+e_{2}e_{3}=0$ and
$b_{2}c_{4}+e_{2}e_{4}=0$. The last two equations imply that $(e_{2},b_{2})$
is a nontrivial solution of the matrix equation\[
\left(\begin{array}{cc}
e_{3} & -c_{3}\\
e_{4} & -c_{4}\end{array}\right)\left(\begin{array}{c}
x\\
y\end{array}\right)=\left(\begin{array}{c}
0\\
0\end{array}\right).\]
This implies $e_{3}c_{4}-c_{3}e_{4}=0$, which together with $b_{3}=b_{4}=0$
contradict the reducibility of $\mathcal{A}$. So, $c_{3}c_{4}=0$
and either $\Delta_{123}=0$ or $\Delta_{124}=0$, by Equation (\ref{eq:explicitDelta}).
Assuming, without loss of generality, that $\Delta_{123}=0$, $(A_{1},A_{2},A_{3})$
is triangularizable by Theorem \ref{thm:SigmaDelta}, and the result
follows from Proposition \ref{pro:ReducedTriple}.
\end{proof}
Now, we are ready to prove Theorems \ref{thm:leq2} and \ref{thm:efic}.

\noindent \emph{Proof of Theorem} \ref{thm:leq2}: Suppose $\mathcal{A}$
is reduced of length $l\geq4$ and assume all subsequences of $\mathcal{A}$
of length $\leq2$ are triangularizable. Then, by Proposition \ref{pro:quadrup}
all quadruples of $\mathcal{A}$ are triangularizable, so that $\mathcal{A}$
is triangularizable by Proposition \ref{pro:leq3}. \hfill{}$\square$

\noindent \emph{Proof of Theorem} \ref{thm:efic}: Let $\mathcal{A}=(A_{1},...,A_{n})$
be a matrix sequence with maximal reduction $\mathcal{A}'=(A_{1},...,A_{l})$
of length $l$. Then $\mathcal{A}$ is triangularizable if and only
$\mathcal{A}'$ is, by Corollary \ref{cor:reduce}, showing (i) for
$l\leq3$. If $l=4$, $(A_{1},A_{2},A_{3},A_{4})$ is triangularizable
by Proposition \ref{pro:quadrup}. Then, we are in the hypothesis
of Theorem \ref{thm:induct}, which shows that we can apply induction
to conclude that $\mathcal{A}'$ is triangularizable for all $l\geq4$.\hfill{}$\square$

\begin{rem}
To summarize and relate our results with McCoy's Theorem, we have
shown that over an algebraically closed field and under the simple
condition that $\mathcal{A}$ has reduced length $\neq3$, $\mathcal{A}$
is triangularizable if and only if the commutators $[A_{j},A_{k}]$
are nilpotent, for $j=1,2,3$ and $k>j$. In particular, the monomials
$p(x)$ in Theorem \ref{thm:McCoy} are unnecessary (for reduced length
$\neq3$).
\end{rem}

\section{Simultaneous Similarity\label{sec:SimSim}}

%
{}

\subsection{Similarity and subtriples}

Again, let $G=GL_{2}(R)$. We now prove Proposition \ref{pro:sim3}
with the help of the following easy lemma. 

\begin{lem}
\label{lem:stabilizer}Let $[A_{1},A_{2}]\neq0$ and $g\in G$. If
$g\cdot(A_{1},A_{2})=(A_{1},A_{2})$ then $g$ is a scalar.
\end{lem}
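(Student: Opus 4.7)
The plan is to combine the hypothesis $[A_1,A_2]\neq 0$ with the structure of centralizers of non-scalar $2\times 2$ matrices (Lemma \ref{lem:non-der}) to force $g$ to be a scalar multiple of the identity. First I would observe that $[A_1,A_2]\neq 0$ forces both $A_1$ and $A_2$ to be non-scalar: indeed, a scalar matrix commutes with everything, so if either were scalar the commutator would vanish.

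Next, since $g\cdot(A_1,A_2)=(A_1,A_2)$ means $gA_jg^{-1}=A_j$, i.e.\ $g$ commutes with both $A_1$ and $A_2$. Working over the field of fractions $\mathbb{F}$ of $R$ and applying Lemma \ref{lem:non-der} to the commuting pair $(A_1,g)$ (with $A_1$ non-scalar), we obtain scalars $\alpha,\beta\in\mathbb{F}$ such that
\[
g=\alpha I+\beta A_1.
\]
If $\beta\neq 0$, then $A_1=\beta^{-1}(g-\alpha I)$, and since $g$ and $I$ both commute with $A_2$, so does $A_1$, contradicting $[A_1,A_2]\neq 0$. Therefore $\beta=0$ and $g=\alpha I$.

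Finally, because $g\in GL_2(R)$ has entries in $R$, the scalar $\alpha$ appears on the diagonal of $g$, so $\alpha\in R$ and $g$ is genuinely a scalar matrix over $R$. The only subtle point is making sure Lemma \ref{lem:non-der}, stated over a field, applies in the integral-domain setting; this is handled cleanly by passing to $\mathbb{F}$ and then descending the conclusion $g=\alpha I$ back to $R$ via the entries of $g$.
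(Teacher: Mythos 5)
Your proof is correct, and it takes a genuinely different route from the paper. The paper's argument is direct linear algebra: writing $g$ with columns $(x,y)$ and $(z,w)$, it encodes $A_jg=gA_j$ as $M_ju=0$ where $u=(z,\,x-w,\,y)$ and $M_j$ is the same antisymmetric $3\times3$ matrix that appears in the proof of Lemma~\ref{lem:non-der}; since each $M_j$ has rank $2$ with nullspace spanned by $(b_j,e_j,c_j)$, and $[A_1,A_2]\neq0$ forces these two spanning vectors to be non-colinear, it concludes $u=0$, i.e.\ $z=y=0$ and $x=w$. Your approach instead reuses Lemma~\ref{lem:non-der} as a black box: applying it over the fraction field $\mathbb{F}$ to the commuting pair $(A_1,g)$ gives $g=\alpha I+\beta A_1$, and then $\beta\neq0$ would make $A_1$ a polynomial in the central elements $g$ and $I$, contradicting $[A_1,A_2]\neq0$. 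Both arguments are sound; yours is more conceptual (exploiting the known structure of the centralizer of a nonderogatory $2\times2$ matrix) at the price of passing to $\mathbb{F}$ and descending, whereas the paper's is entirely elementary over $R$ and is essentially the same rank computation that underlies Lemma~\ref{lem:non-der} in the first place. You also correctly flag and handle the one subtlety, that $\alpha$ must lie in $R$ (it does, being a diagonal entry of $g$), though one should note that the statement of the lemma only asserts $g$ is scalar, not that the scalar lies in any particular ring, so even that step is not strictly required.
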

\begin{proof}
Let $g$ have columns $(x,y)$ and $(z,w)$ with $xw-yz$ invertible.
The conditions $A_{j}g=gA_{j}$, for $j=1,2$ can be written as $M_{j}u=0$,
where \[
M_{j}=\left(\begin{array}{ccc}
0 & c_{j} & -e_{j}\\
-c_{j} & 0 & b_{j}\\
e_{j} & -b_{j} & 0\end{array}\right),\quad\quad u=(z,x-w,y),\quad\quad j=1,2.\]
So, the vector $(z,x-w,y)\in R^{3}$ lies in the intersection of the
nullspace of the $M_{j}$, each being generated by the non-zero vector
$(b_{j},e_{j},c_{j})$, $j=1,2$, (each $M_{j}$ has rank exactly
2, as $A_{1}$ and $A_{2}$ are non-scalars). So, $(z,x-w,y)$ is
zero unless $(b_{1},e_{1},c_{1})$ and $(b_{2},e_{2},c_{2})$ are
colinear. But this is not the case by Equation (\ref{eq:comuta})
since $[A_{1},A_{2}]\neq0$. Thus, $z=y=0$ and $x=w$, so that $g$
is scalar as wanted.
\end{proof}
\noindent \emph{Proof of Proposition} \ref{pro:sim3}: If $\mathcal{A}$
and $\mathcal{B}$ are similar, then $\mathcal{A}_{J}\sim\mathcal{B}_{J}$
for any index set $J=(j_{1},...,j_{l})$ with $l\leq n$ and $1\leq j_{1}<...<j_{l}\leq n$.
Conversely, let $\mathcal{A}_{J}\sim\mathcal{B}_{J}$ for index sets
$J$ of cardinality $\leq3$. We divide the proof into three cases.
(i) Suppose $\mathcal{A}$ is scalar. Since scalar matrices are invariant
under conjugation, $A_{j}\sim B_{j}$ implies that $A_{j}=B_{j}$,
so $\mathcal{A}=\mathcal{B}$. (ii) Suppose now that $\mathcal{A}$
is non-scalar, but commutative. Then some term of $\mathcal{A}$,
say $A_{1}$ is non-scalar. Since $A_{1}\sim B_{1}$, $B_{1}$ is
also non-scalar, and after a conjugation, we can assume that $B_{1}=A_{1}$.
Moreover, $(A_{j},A_{k})\sim(B_{j},B_{k})$ and $[A_{j},A_{k}]=0$
implies that $[B_{j},B_{k}]=0$. As a consequence, $\mathcal{B}$
is also commutative and non-scalar. Since $A_{1}$ is a non-scalar
$2\times2$ matrix, it is a nonderogatory matrix, so that every matrix
commuting with $A_{1}$ is a polynomial in $A_{1}$ with coefficients
in $\mathbb{F}$, the field of fractions of $R$. Therefore $\mathcal{A}=(A_{1},p_{2}(A_{1}),...,p_{n}(A_{1}))$
for some polynomials $p_{j}(x)$, $j=2,...,n$. Since pairs are similar,
let $g_{j}\in G$, $j=2,...,n$, be such that $(B_{1},B_{j})=(A_{1},B_{j})=(g_{j}A_{1}g_{j}^{-1},g_{j}A_{j}g_{j}^{-1})$.
Then $B_{j}=g_{j}A_{j}g_{j}^{-1}=g_{j}\ p_{j}(A_{1})g_{j}^{-1}=p_{j}(g_{j}A_{1}g_{j}^{-1})=p_{j}(A_{1})$,
for all $j=2,...,n$. So $\mathcal{B}=(B_{1},...,B_{n})=(A_{1},p_{2}(A_{1}),...,p_{n}(A_{1}))=\mathcal{A}$.
(iii) Finally, let $\mathcal{A}$ be non-commutative. Then, some pair
does not commute, say $[A_{1},A_{2}]\neq0$. Since all pairs are similar
we may, after a suitable conjugation, assume that $(A_{1},A_{2})=(B_{1},B_{2})$.
Since all triples are similar, let $g_{j}\in G$, $j=3,...,n$, be
such that $g_{j}\cdot(A_{1},A_{2},A_{j})=(A_{1},A_{2},B_{j})$. Then,
since $g_{j}\cdot(A_{1},A_{2})=(A_{1},A_{2})$ Lemma \ref{lem:stabilizer}
implies that $g_{j}$ is scalar for all $j=3,...,n$, so $B_{j}=g_{j}\cdot A_{j}=A_{j}$
and $\mathcal{B}=\mathcal{A}$.\hfill{}$\square$

\subsection{Similarity for semisimple sequences}

We now work over a field $\mathbb{F}$. Recall the following definition.

\begin{defn}
It is called \emph{semisimple} if its $G$-orbit (for the conjugation
action (\ref{eq:accao})) is Zariski closed in $V_{n}$. 
\end{defn}
This notion of semisimplicity generalizes that of a single matrix.
In the context of geometric invariant theory, semisimplicity can be
translated into more algebraic terms as follows. Recall that, in the
general situation of a general affine (algebraic) reductive group
$K$ acting on an affine variety $V$ (both over $\mathbb{F}$) one
defines the affine quotient variety $V\quot K$ as the maximal spectrum
of the ring of invariant functions on $V$, $\Spm\left(\mathbb{F}[V]^{K}\right)$,
which comes equipped with a projection\[
q:V\rightarrow V\quot K\]
induced from the canonical inclusion of algebras $\mathbb{F}[V]^{K}\subset\mathbb{F}[V]$
(see, for example, \cite{MFK} or \cite{Mu}). The set of closed orbits
is in bijective correspondence with geometric points of the quotient
$V\quot K$. Recall also that a vector $x\in V$ is said to be \emph{stable}
if the corresponding `orbit map'\[
\psi_{x}:K\rightarrow V,\quad\quad g\mapsto g\cdot x\]
is proper. It is easy to see that $x\in V$ is stable if and only
if the $K$-orbit of $x$ is closed and its stabilizer $K_{x}$ is
finite. Another useful criterion for stability is the \emph{Hilbert-Mumford
numerical criterion}, which is stated in terms of nontrivial homomorphisms
$\phi:\mathbb{F}^{*}\rightarrow K$, called one parameter subgroups
(1PS) of $K$ ($\mathbb{F}^{*}:=\mathbb{F}\setminus\left\{ 0\right\} $).
To any such $\phi$ and to a point $x\in V$ one associates the composition
$\phi_{x}:=\psi_{x}\circ\phi:\mathbb{F}^{*}\rightarrow V$. If $\phi_{x}$
can be extended to a morphism $\overline{\phi_{x}}:\mathbb{F}\rightarrow V$,
we say that $\lim_{\lambda\to0}\phi_{x}(\lambda)$ exists and equals
$\overline{\phi_{x}}(0)$. 

\begin{thm}
\emph{(Hilbert-Mumford, see \cite{MFK})} \label{thm:HMC}A point
$x\in V$ is \emph{not stable} if and only if there is a one parameter
subgroup $\phi$ of $K$, such that $\overline{\phi_{x}}(0)$ exists.
\end{thm}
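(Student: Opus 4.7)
The plan is to prove each implication separately, with the non-trivial content concentrated in the forward direction. For orientation: non-stability of $x$ means either (a) the orbit $K\cdot x$ fails to be Zariski closed in $V$, or (b) the stabilizer $K_x$ is infinite (equivalently, positive-dimensional as an algebraic subgroup of $K$). A 1PS $\phi$ with $\overline{\phi_x}(0)$ existing is a particularly economical witness of non-stability, and the goal is to show one is always available.

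For the easy direction, assume a 1PS $\phi$ with $\overline{\phi_x}(0)=y$ exists, and argue that $x$ is not stable. If $\phi_x$ is constant, then $\phi(\mathbb{F}^*)\subseteq K_x$, so $K_x$ contains a copy of $\mathbb{F}^*$ and hence is infinite: $x$ is not stable. If $\phi_x$ is non-constant, I would show the orbit $K\cdot x$ is not closed: were it closed we would have $y\in K\cdot x$, say $y=g\cdot x$, and then $\psi_x^{-1}$ of a neighbourhood of $y$ contains the image of a punctured neighbourhood of $0$ under $\phi$, contradicting properness of the orbit map $\psi_x$. In either case $x$ is not stable.

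For the hard direction, split into the two cases above. Case (b) is easy: if $K_x$ is positive-dimensional, then since $K$ is affine algebraic, $K_x$ contains either a non-trivial torus $\mathbb{F}^*\hookrightarrow K_x$ or a non-trivial $\mathbb{G}_a$ (and the latter, after embedding $K\hookrightarrow GL_N$ and using Jordan decomposition, gives rise to a non-trivial 1PS through a unipotent one-parameter subgroup — or one works only with $\mathbb{G}_m$'s after reducing to a maximal torus of $K$ containing a semisimple element of $K_x$). In either sub-case $\phi_x$ is constant and the required limit trivially exists. Case (a) is the substantive part: pick $y\in\overline{K\cdot x}\setminus K\cdot x$. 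By the curve selection lemma for constructible sets (or by completing a DVR at a generic point of an irreducible curve in $\overline{K\cdot x}$ joining $x$ to $y$), one obtains a complete discrete valuation ring $\mathcal{O}$ with fraction field $F$ and a morphism $\mathrm{Spec}\,F\to K$, $\eta\mapsto g$, such that $g\cdot x$ extends across the closed point with value $y$. The key now is to upgrade this $F$-point of $K$ to something built from a 1PS: Iwahori's decomposition theorem for the reductive group $K$ over $\mathcal{O}$ allows us to write $g=k_1\,\phi(\pi)\,k_2$ with $k_1,k_2\in K(\mathcal{O})$, $\phi$ a 1PS of $K$, and $\pi$ a uniformizer of $\mathcal{O}$. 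Replacing $x$ by $k_2\cdot x$ (and $y$ by $k_1^{-1}\cdot y$, which is still in the orbit closure of the new $x$), the limit $\lim_{\lambda\to 0}\phi(\lambda)\cdot x$ exists and lies in the orbit of $y$.

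The main obstacle is the last step: extracting a 1PS from an abstract DVR-valued point of $K$. Elementarily, for $K=GL_m$ this is just Smith normal form / the Cartan decomposition, and Mumford deduces the general reductive case by embedding $K\hookrightarrow GL_N$ and using Iwahori–Matsumoto to descend the $GL_N$-decomposition back to $K$. Conceptually everything else is routine (properness of orbit maps, structure of positive-dimensional algebraic subgroups); all the weight sits in this Iwahori-type decomposition, which is why the standard references invoke \cite{MFK} rather than giving a self-contained argument.
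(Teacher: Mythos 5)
The paper does not prove this theorem. It is stated with an attribution to Hilbert and Mumford and a citation to \cite{MFK}; the numerical criterion is then used as a black box in the proof of Proposition~\ref{Prop:Artin}. So there is no proof in the paper to compare your attempt against, and I can only assess your sketch on its own terms.

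Your overall route for the hard direction --- producing a $\mathrm{Spec}\,F$-point of $K$ that carries $x$ into the boundary of its orbit, then invoking the Cartan/Iwahori decomposition $K(F)=K(\mathcal{O})\,\phi(\pi)\,K(\mathcal{O})$ to peel off a one-parameter subgroup, and conjugating to adjust --- is precisely Mumford's argument in \cite{MFK}, and you correctly locate all the weight in that decomposition. The easy direction is also fine in substance, though it is best phrased via the valuative criterion for properness: the $\mathcal{O}=\mathbb{F}[\lambda]_{(\lambda)}$-point of $K$ forced by properness would let a nontrivial 1PS $\phi:\mathbb{G}_m\to K$ extend to $\mathbb{A}^1\to K$, which is impossible since, after diagonalizing inside $GL_N$, the determinant condition forces all weights to vanish.

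There is, however, a genuine gap in your case~(b). A one-parameter subgroup here means, by definition, a homomorphism $\mathbb{G}_m\to K$. If the identity component of $K_x$ is \emph{unipotent} --- so $K_x$ contains copies of $\mathbb{G}_a$ but no nontrivial torus --- then no 1PS of $K$ lands inside $K_x$, and a unipotent one-parameter subgroup $\mathbb{G}_a\hookrightarrow K_x$ does not ``give rise to'' a 1PS in any sense; your fallback of choosing a maximal torus through a semisimple element of $K_x$ also fails, because a nontrivial unipotent group has no nontrivial semisimple elements. Thus your case~(b), as written, simply does not produce a destabilizing 1PS in the unipotent sub-case. The repair is to note that case~(b) is only \emph{needed} when the orbit $K\cdot x$ is already closed (otherwise case~(a) applies), and in that situation Matsushima's theorem for a reductive $K$ acting on an affine $V$ forces $K_x$ to be reductive; a positive-dimensional reductive group over $\bar{\mathbb{F}}$ contains a nontrivial torus, hence a 1PS with $\phi_x$ constant. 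Your case split ``orbit not closed'' versus ``stabilizer infinite'' is not a clean division of labour precisely because the second alternative, taken in isolation, is not easy. One further subtlety worth flagging: the Iwahori step a priori produces a 1PS defined only over the fraction field of the DVR, which can be a proper extension of $\mathbb{F}$. Over an algebraically closed base this is harmless, but over a general field one needs Kempf's instability theorem to descend the 1PS to $\mathbb{F}$; the statement as used in this paper is therefore cleanest over $\bar{\mathbb{F}}$, which is how Proposition~\ref{Prop:Artin} is proved.
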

%
{}

Let us return to our example of the conjugation action (\ref{eq:accao})
of $G=GL_{2}(\mathbb{F})$ on $V_{n}=V_{n}(\mathbb{F})$, and let
$\mathcal{S}\subset V_{n}=V_{n}(\mathbb{F})$ denote the subset of
semisimple sequences. Then $\mathcal{S}/G$, being the set of closed
orbits, is in bijection with $V_{n}\quot G=\Spm\left(\mathbb{F}[V_{n}]^{G}\right)$.
As described in the introduction, Drensky's theorem (Theorem \ref{thm:Drensky})
realized the algebra of invariants as a quotient $\mathbb{F}[V_{n}]^{G}=\mathbb{F}[\mathbf{x}]/I$
where \[
\mathbf{x}=\left(t_{1},...,t_{n},t_{11},...,t_{nn},u_{12},...,u_{n-1,n},s_{123},...,s_{n-2,n-1,n}\right)\in\mathbb{F}^{N}\]
is the list of generators ($N=2n+\binom{n}{2}+\binom{n}{3}$) and
$I$ the ideal of relations. Dualizing the sequence\[
\mathbb{F}[\mathbf{x}]\to\mathbb{F}[\mathbf{x}]/I=\mathbb{F}[V_{n}]^{G}\subset\mathbb{F}[V_{n}]\]
we obtain:\[
V_{n}\to V_{n}\quot G=\mathcal{S}/G\subset\mathbb{F}^{N}.\]
By standard arguments, the last inclusion is precisely the map $\Phi([\mathcal{A}])=\mathbf{x},$
that sends a $G$ orbit to its values on the generators, so we conclude
the following.

\begin{prop}
\label{pro:phi}For a field of characteristic zero, the map $\Phi:\mathcal{S}/G\to\mathbb{F}^{N}$
is injective.
\end{prop}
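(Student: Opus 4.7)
The plan is to deduce this from two standard inputs already in hand: Drensky's theorem (Theorem \ref{thm:Drensky}), which identifies the generators of $\mathbb{F}[V_{n}]^{G}$, and the fundamental separation property of the GIT quotient for a reductive group action. Since $\mathbb{F}$ has characteristic zero, $G=GL_{2}(\mathbb{F})$ is linearly reductive, so the general framework recalled just before the statement applies.

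First, I would describe $\Phi$ as the composition of the categorical quotient morphism $q:V_{n}\to V_{n}\quot G$ with a closed embedding $\iota:V_{n}\quot G\hookrightarrow\mathbb{F}^{N}$. By Drensky's theorem the map of $\mathbb{F}$-algebras $\mathbb{F}[\mathbf{x}]\to\mathbb{F}[V_{n}]^{G}$ sending the coordinates of $\mathbf{x}$ to the listed generators is surjective with kernel $I$, so dualizing gives $\iota$ as the closed immersion of $V_{n}\quot G=\Spm\mathbb{F}[V_{n}]^{G}$ onto the affine subvariety $\{\mathbf{y}\in\mathbb{F}^{N}:f(\mathbf{y})=0\ \forall f\in I\}$; by construction, for any $\mathcal{A}\in V_{n}$, $\iota\circ q(\mathcal{A})$ is exactly the tuple of values of the generators in (\ref{eq:generators}) on $\mathcal{A}$, which is $\Phi([\mathcal{A}])$.

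Next, I would invoke the standard fact for a reductive group acting on an affine variety: two points $\mathcal{A},\mathcal{B}\in V_{n}$ satisfy $q(\mathcal{A})=q(\mathcal{B})$ if and only if the Zariski closures $\overline{G\cdot\mathcal{A}}$ and $\overline{G\cdot\mathcal{B}}$ meet. Specializing to $\mathcal{A},\mathcal{B}\in\mathcal{S}$, both orbits are already closed, so this intersection condition forces $G\cdot\mathcal{A}=G\cdot\mathcal{B}$, i.e.\ $[\mathcal{A}]=[\mathcal{B}]$. Hence $q$ is injective on $\mathcal{S}/G$, and since $\iota$ is a closed embedding it is in particular injective on points; therefore $\Phi=\iota\circ q$ is injective on $\mathcal{S}/G$.

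There is no real obstacle here: the statement is essentially a repackaging of the two cited results. The only point requiring care is the hypothesis $\operatorname{char}\mathbb{F}=0$, which guarantees linear reductivity of $GL_{2}(\mathbb{F})$ and hence both the validity of Drensky's generating set (Theorem \ref{thm:Drensky} is stated in characteristic zero) and the orbit-closure separation property of the GIT quotient used above. Everything else is a formal consequence of the identification $\Phi=\iota\circ q$.
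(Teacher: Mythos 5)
Your argument is essentially the paper's: both factor $\Phi$ as the categorical quotient $q:V_{n}\to V_{n}\quot G$ followed by the closed embedding coming from Drensky's presentation $\mathbb{F}[V_{n}]^{G}=\mathbb{F}[\mathbf{x}]/I$, and then use that closed orbits are separated by $q$. The proposal is correct and matches the intended proof.
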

To obtain an analogous map for non-semisimple sequences, we first
characterize these ones as triangularizable but not diagonalizable
sequences.

Note that, for the action of $G$ on $V_{n}$ there are no stable
points, since the scalar nonzero matrices stabilize any sequence $\mathcal{A}\in V_{n}$.
This is not a problem, since the same orbit space is obtained with
the conjugation action of the affine reductive group $G_{1}=SL_{2}(\mathbb{F})$
(determinant one matrices in $GL_{2}(\mathbb{F})$) on $V_{n}$ which
has generically $\mathbb{Z}_{2}$ stabilizers:\[
V_{n}\quot G=V_{n}\quot G_{1}.\]
Note that any diagonal matrix sequence $\mathcal{A}\in D:=\left\{ \mathcal{A}\in V_{n}:b=c=0\right\} $
has the subgroup \begin{equation}
H=\left(\begin{array}{cc}
\lambda & 0\\
0 & \lambda^{-1}\end{array}\right)\subset G_{1},\,\,\lambda\in\mathbb{F}^{*}\label{eq:subg}\end{equation}
contained in its stabilizer.

\begin{prop}
\label{Prop:Artin} A $2\times2$ matrix sequence is stable (for the
$SL_{2}(\mathbb{F})$ conjugation action) if and only if it is not
triangularizable. A $2\times2$ matrix sequence is semisimple if and
only if it is either stable or diagonalizable.
\end{prop}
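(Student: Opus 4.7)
The plan is to apply the Hilbert--Mumford criterion (Theorem \ref{thm:HMC}) to the conjugation action of $G_{1}=SL_{2}(\mathbb{F})$ on $V_{n}$. The crucial reduction is the standard fact that every non-trivial one-parameter subgroup of $SL_{2}(\mathbb{F})$ is $SL_{2}$-conjugate to $\phi_{0}(\lambda):=\left(\begin{smallmatrix}\lambda^{k} & 0\\ 0 & \lambda^{-k}\end{smallmatrix}\right)$ for some integer $k\geq 1$, so any 1PS can be written as $\phi=h\phi_{0}h^{-1}$ with $h\in SL_{2}(\mathbb{F})$. Then $\phi(\lambda)\cdot\mathcal{A}=h\cdot\bigl(\phi_{0}(\lambda)\cdot\tilde{\mathcal{A}}\bigr)$ with $\tilde{\mathcal{A}}:=h^{-1}\mathcal{A}h$, and computing $\phi_{0}(\lambda)\tilde{A}_{j}\phi_{0}(\lambda)^{-1}$ shows its $(1,2)$- and $(2,1)$-entries are $\lambda^{2k}\tilde{b}_{j}$ and $\lambda^{-2k}\tilde{c}_{j}$. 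Hence the limit at $\lambda=0$ exists if and only if $\tilde{c}_{j}=0$ for all $j$, i.e., if and only if $\tilde{\mathcal{A}}$ is upper triangular.

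For the stability statement, if $\mathcal{A}$ is triangularizable, choose $h$ triangularizing it and take $\phi=h\phi_{0}h^{-1}$ with $k=1$: the limit exists, so $\mathcal{A}$ is not stable by Theorem \ref{thm:HMC}. Conversely, if $\mathcal{A}$ is not stable, Theorem \ref{thm:HMC} provides a 1PS whose limit exists, and the remark above forces the corresponding $h^{-1}\mathcal{A}h$ to be upper triangular, so $\mathcal{A}$ is triangularizable.

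For the semisimplicity statement, ``stable implies semisimple'' is automatic. For ``semisimple implies stable or diagonalizable'': if a semisimple $\mathcal{A}$ is not stable, Part~1 yields a triangular form, and applying $\phi_{0}$ with $k=1$ produces the limit equal to the diagonal part $\mathcal{A}_{0}$; closedness of the orbit forces $\mathcal{A}_{0}$ into it, so $\mathcal{A}\sim\mathcal{A}_{0}$ is diagonalizable. For ``diagonalizable implies semisimple'': after conjugation take $\mathcal{A}$ diagonal, assumed nonscalar (the scalar case is trivial). Suppose $\phi=h\phi_{0}h^{-1}$ produces a limit for $\mathcal{A}$; then $\tilde{\mathcal{A}}=h^{-1}\mathcal{A}h$ is upper triangular and the limit equals $h\cdot\tilde{\mathcal{A}}_{0}$, where $\tilde{\mathcal{A}}_{0}$ is its diagonal part. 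Since $\mathcal{A}$ is diagonal and nonscalar, its common eigenlines are exactly $\langle e_{1}\rangle$ and $\langle e_{2}\rangle$, so $h$ either preserves $\langle e_{1}\rangle$ (hence is upper triangular, and $\tilde{\mathcal{A}}_{0}=\mathcal{A}$) or swaps the two eigenlines (hence factors as $h=wh'$ with $h'$ upper triangular and $w=\left(\begin{smallmatrix}0 & 1\\ -1 & 0\end{smallmatrix}\right)\in SL_{2}$ the Weyl element, in which case $\tilde{\mathcal{A}}_{0}=w^{-1}\mathcal{A}w$, the diagonal sequence with its two entries swapped). In either case $h\cdot\tilde{\mathcal{A}}_{0}$ is $SL_{2}$-conjugate to $\mathcal{A}$, so it lies in the orbit and no 1PS escapes, proving orbit closedness.

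The most delicate point is this last direction, where it is essential that the eigenvalue-swap is realized by an element of $SL_{2}$ (the Weyl element $w$, not merely of $GL_{2}$), so that the 1PS limit lies in the $G_{1}$-orbit and one concludes genuine orbit closedness; operationally this amounts to checking the two cells of the Bruhat decomposition of $SL_{2}$ (the Borel of upper-triangular matrices and its Weyl translate) separately.
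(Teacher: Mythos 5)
Your proof is correct. For the first statement you follow the paper's line: reduce an arbitrary one-parameter subgroup to the diagonal $\phi_{0}$, observe that $\overline{\phi_{\mathcal{A}}}(0)$ exists exactly when the conjugated sequence is upper triangular, and invoke Theorem~\ref{thm:HMC}. For the second statement, which the paper dispatches with the single sentence ``the second statement is analogous'', you supply the missing argument; in particular, the care you take to realize the eigenvalue swap via the Weyl element $w\in SL_{2}$ (rather than a mere element of $GL_{2}$) is exactly the point one must not gloss over, since the limit has to land in the $SL_{2}$-orbit. One caveat worth flagging: the final inference in the diagonalizable case, ``no 1PS escapes the orbit, hence the orbit is closed'', does \emph{not} follow from Theorem~\ref{thm:HMC} as stated, which is a criterion for stability only. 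You are implicitly invoking the stronger Kempf--Hilbert--Mumford closedness criterion (for a reductive group $K$ acting linearly on an affine variety, $K\cdot x$ is closed if and only if every limit $\overline{\phi_{x}}(0)$ that exists already lies in $K\cdot x$). This is a standard fact, but it goes beyond what the paper quotes and should be cited separately; the paper itself sidesteps the issue by deferring to Artin's general results \cite{A}.
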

\begin{proof}
This follows from general results (see Artin, \cite{A}). For completeness,
we include a proof of this particular case, using the Hilbert-Mumford
criterion (Theorem \ref{thm:HMC}). For the first statement, we can
assume that $\mathcal{A}$ is upper triangular. Then, a simple computation
shows that the closure of the orbit of $\mathcal{A}$ under the subgroup
$H\subset G_{1}=SL_{2}(\mathbb{F})$ (Equation \ref{eq:subg}) intersects
$D$. So, either $\mathcal{A}$ is in $D$ (and it is commutative
and semisimple) and its stabilizer contains $D$, or $\mathcal{A}$
is not commutative, $\mathcal{A}\notin D$ so $G\cdot\mathcal{A}=G_{1}\cdot\mathcal{A}$
is not closed. In either case, $\mathcal{A}$ is not stable. Conversely,
let $\mathcal{A}$ be not stable for the action of $G_{1}$. By elementary
representation theory, any one parameter subgroup of $G_{1}$ is conjugated
to\[
\lambda\mapsto\phi_{n}(\lambda)=\left(\begin{array}{cc}
\lambda^{n} & 0\\
0 & \lambda^{-n}\end{array}\right),\quad n\in\left\{ 1,2,...\right\} .\]
In other words, any 1PS can be written as $\phi=g^{-1}\phi_{n}g$,
for some $g\in G_{1}$ and some $\phi_{n}$ so,\begin{equation}
\lim_{\lambda\rightarrow0}\phi_{\mathcal{A}}(\lambda)=\lim_{\lambda\rightarrow0}\phi(\lambda)\cdot\mathcal{A}=g^{-1}\lim_{\lambda\rightarrow0}\phi_{\mathsf{n}}(\lambda)\cdot(g\cdot\mathcal{A}).\label{eq:limit}\end{equation}
 Writing $g\cdot\mathcal{A}$ as\[
g\cdot\mathcal{A}=\left(\begin{array}{cc}
a(g) & b(g)\\
c(g) & d(g)\end{array}\right),\]
 we obtain \[
\phi_{n}(\lambda)\cdot(g\cdot\mathcal{A})=\left(\begin{array}{ll}
a(g) & b(g)\lambda^{2n}\\
c(g)\lambda^{-2n} & d(g)\end{array}\right).\]
By the Hilbert-Mumford criterion, the limit (\ref{eq:limit}) exists
for some 1PS, so we must have $c(g)=0$, for some $g\in G_{1}$. This
means that $g\cdot\mathcal{A}$ is upper triangular, hence not irreducible.
The second statement is analogous.
\end{proof}

\subsection{Similarity for non-commutative triangularizable sequences}

To obtain a numerical similarity criterion for non-semisimple sequences,
we are reduced, by Proposition \ref{Prop:Artin}, to the case of triangularizable
sequences. Here, we consider the non-commutative triangularizable
case.

Let $\mathsf{U}\subset V_{n}$ be the affine variety of upper triangular
matrix sequences and let $\mathsf{K}\subset\mathsf{U}$ be the subset
of commutative sequences. For $n\geq2$ and $\mathcal{A}\in\mathsf{U}$,
let $P_{\mathcal{A}}$ denote the following $2\times n$ matrix, and
$\delta_{jk}=\delta_{jk}(\mathcal{A})$ the corresponding $2\times2$
minors \[
P_{\mathcal{A}}=\left(\begin{array}{ccc}
e_{1} & \cdots & e_{n}\\
b_{1} & \cdots & b_{n}\end{array}\right),\quad\quad\delta_{jk}=b_{j}e_{k}-e_{j}b_{k},\quad\quad j,k\in\{1,...,n\}.\]
According to Lemma \ref{lem:comm}, $\mathsf{K}$ is characterized
as consisting of sequences $\mathcal{A}$ such that the rank of $P_{\mathcal{A}}$
is $\leq1$ (it is $0$ when all terms in $\mathcal{A}$ are scalars).
Hence, when $\mathcal{A}\in\mathsf{U}\setminus\mathsf{K}$, $P_{\mathcal{A}}$
has rank $2$ and defines an element $\pi_{\mathcal{A}}$ of the Grassmanian
$\mathbb{G}(2,n)$ of 2-planes in $\mathbb{F}^{n}$. It is easy to
see that the action of $G=GL_{2}(\mathbb{F})$, restricted to $\mathsf{U}\setminus\mathsf{K}$,
preserves the plane $\pi_{\mathcal{A}}$:

\begin{lem}
\label{lem:e=00003De'}Let $g\in G$. If $\mathcal{A}$ and $\mathcal{A}'=g\cdot\mathcal{A}$
are both in $\mathsf{U}\setminus\mathsf{K}$, then $e=e'$ and $\pi_{\mathcal{A}}=\pi_{\mathcal{A}'}$. 
\end{lem}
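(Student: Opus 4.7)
The plan is to first use the non-commutativity of $\mathcal{A}$ to force $g$ into the Borel subgroup of upper triangular matrices, and then to read off both conclusions from a direct conjugation computation. The crux is a uniqueness property for the common eigen-line of two non-commuting upper triangular $2\times 2$ matrices.

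Since $\mathcal{A} \notin \mathsf{K}$, I would begin by fixing indices $j, k$ with $[A_j, A_k] \neq 0$; the conjugated terms $A_j' = g A_j g^{-1}$ and $A_k' = g A_k g^{-1}$ are then another non-commuting upper triangular pair (non-commuting because conjugation is an algebra automorphism, upper triangular by the assumption $\mathcal{A}' \in \mathsf{U}$). I would then prove the following claim: two non-commuting upper triangular $2\times 2$ matrices admit $\mathbb{F} e_1$ (with $e_1 = (1,0)^T$) as their \emph{only} common eigen-line. Indeed $e_1$ is always a common eigenvector, while any other candidate $v = (v_1, v_2)^T$ with $v_2 \neq 0$ is forced to have eigenvalue $d_j$ (resp.\ $d_k$), yielding the two linear conditions
\[
e_j v_1 + b_j v_2 = 0 = e_k v_1 + b_k v_2,
\]
whose associated $2\times 2$ determinant is precisely $\delta_{jk} = b_j e_k - e_j b_k$. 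Lemma \ref{lem:comm} identifies the vanishing of $\delta_{jk}$ with commutativity of the pair $(A_j, A_k)$, giving the required contradiction.

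Applying this uniqueness both to $(A_j, A_k)$ and to $(A_j', A_k')$, the vector $g e_1$ is a common eigenvector of the primed pair, so $g e_1 \in \mathbb{F} e_1$, which forces the first column of $g$ to be a multiple of $e_1$; hence $g = \bigl(\begin{smallmatrix} x & y \\ 0 & w \end{smallmatrix}\bigr)$ with $xw \neq 0$. A direct computation then gives, for each term $A_i$ of $\mathcal{A}$,
\[
g A_i g^{-1} = \begin{pmatrix} a_i & (x b_i - y e_i)/w \\ 0 & d_i \end{pmatrix}.
\]
The diagonal entries are preserved, so $e_i' = e_i$ for every $i$, whence $e = e'$. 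Reading the off-diagonal entries componentwise across $i = 1, \ldots, n$, the rows of $P_{\mathcal{A}'}$ are
\[
e' = e, \qquad b' = (x/w)\,b - (y/w)\,e,
\]
which is an invertible linear change of basis of the rows of $P_{\mathcal{A}}$ (determinant $x/w \neq 0$); therefore the row span is unchanged and $\pi_{\mathcal{A}'} = \pi_{\mathcal{A}}$.

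The only non-routine step is the uniqueness-of-common-eigenline claim under non-commutativity, but this falls directly out of Lemma \ref{lem:comm}; once it is in hand, the rest is bookkeeping with the explicit form of $g$.
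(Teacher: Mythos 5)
Your argument is correct, and it rejoins the paper's proof at the same waypoint — that $g$ must be upper triangular — but gets there by a genuinely different route. The paper takes $g = \bigl(\begin{smallmatrix} x & z \\ y & w \end{smallmatrix}\bigr) \in SL_2(\mathbb{F})$ and directly computes the lower-left entry of $g\cdot\mathcal{A}$ to be $y(ew - by) \in \mathbb{F}^n$; since $\mathcal{A}\notin\mathsf{K}$ means the rows $e$ and $b$ of $P_{\mathcal{A}}$ are linearly independent, any $y\neq 0$ would make that entry nonzero, contradicting $\mathcal{A}'\in\mathsf{U}$. You instead prove a small structural fact — a non-commuting upper triangular pair has a unique common eigenline, the span of the first standard basis vector, since a second common eigenvector forces $\delta_{jk}=0$ and hence commutativity by Lemma \ref{lem:comm} — and then observe that $g$ must carry the unique common eigenline of $(A_j,A_k)$ to that of $(A_j',A_k')$, pinning down the first column of $g$. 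Your version is more conceptual (it exhibits the invariant, the common eigenline, that obstructs non-triangular $g$) and sidesteps the $SL_2$ normalization; the paper's is a shorter one-shot matrix computation of the $(2,1)$ entry. Once $g$ is known upper triangular, the two proofs are essentially identical: the diagonal of each $gA_ig^{-1}$ is unchanged, so $e'=e$, and the upper-right entries undergo an invertible linear substitution inside the span of $e$ and $b$, so $\pi_{\mathcal{A}}=\pi_{\mathcal{A}'}$.
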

\begin{proof}
We can assume that $\mathcal{A}'=g\cdot\mathcal{A}$ for some $g\in SL_{2}(\mathbb{F})$
and compute, for $\mathcal{A}\in\mathsf{U}\setminus\mathsf{K}$, \[
g\cdot\mathcal{A}=\left(\begin{array}{cc}
x & z\\
y & w\end{array}\right)\left(\begin{array}{cc}
a & b\\
0 & d\end{array}\right)\left(\begin{array}{cc}
w & -z\\
-y & x\end{array}\right)=\left(\begin{array}{cc}
* & *\\
y(ew-by) & *\end{array}\right).\]
Since $P_{\mathcal{A}}$ has rank $2$, $ew\neq by$ as vectors in
$\mathbb{F}^{n}$. Thus, in order that $g\cdot\mathcal{A}$ be again
in $\mathsf{U}\setminus\mathsf{K}$, we need $y=0$, which simplifies
the formula above to\begin{equation}
g\cdot\mathcal{A}=\left(\begin{array}{cc}
x & z\\
0 & w\end{array}\right)\left(\begin{array}{cc}
a & b\\
0 & d\end{array}\right)\left(\begin{array}{cc}
w & -z\\
0 & x\end{array}\right)=\left(\begin{array}{cc}
a & x(bx-ez)\\
0 & d\end{array}\right),\label{eq:plane}\end{equation}
because $xw=\de g=1$. Then, $e=e'=a-d$. Moreover, the upper right
entry is $b'=x(bx-ez)=x^{2}b-xze$, a linear combination of $b$ and
$e$. So $\pi_{\mathcal{A}}=\pi_{\mathcal{A}'}$, as asserted. 
\end{proof}
The correspondence $\mathcal{A}\mapsto\pi_{\mathcal{A}}$ is therefore
well defined on the $G$-orbits of non-commutative triangularizable
sequences. To make this more precise, consider the following algebraic
subvarieties of $V_{n}$. Let $\mathcal{U}=G\cdot\mathsf{U}$ be the
variety of triangularizable matrix sequences and let $\mathcal{K}\subset\mathcal{U}$
be the subvariety of commutative sequences. Note that $\mathcal{U}$
is indeed irreducible, as the image in $V_{n}$ of the (irreducible)
algebraic variety $G\times\mathsf{U}$ under the morphism $(g,\mathcal{A})\mapsto g\cdot\mathcal{A}$.
The irreducibility of $\mathcal{K}$ was first noted in \cite{Ge}
(see \cite{Gu} for a proof).

By the previous lemma, given a sequence $\mathcal{A}\in\mathcal{U}\setminus\mathcal{K}$,
we can define $\pi_{\mathcal{A}}:=\pi_{\mathcal{B}}$ using any matrix
sequence $\mathcal{B}\in G\cdot\mathcal{A}\cap\mathsf{U}$, that is,
$\mathcal{B}$ is upper triangular and similar to $\mathcal{A}$ (even
though the matrix $P_{\mathcal{A}}$ is not defined). So, we can define
a map $\psi:\mathcal{U}\backslash\mathcal{K}\to\mathbb{P}^{N-1}$,
where $\mathbb{P}^{N-1}$ denotes the projective space over $\mathbb{F}$
of dimension $N-1=\binom{n}{2}-1$, as the composition\begin{eqnarray*}
\mathcal{U}\backslash\mathcal{K}\\
\pi\downarrow & \searrow\psi\\
\mathbb{G}(2,n) & \hookrightarrow & \mathbb{P}^{N-1}\end{eqnarray*}
where the bottom inclusion is the Plücker embedding of the Grassmanian
$\mathbb{G}(2,n)$. In more concrete terms we have.

\begin{lem}
\label{lem:Plucker}Let $\mathcal{A}\in\mathcal{U}\backslash\mathcal{K}$.
Then $\psi(\mathcal{A})=[\delta_{12}:\delta_{13}:\cdots:\delta_{n-1,n}]$,
where $\delta_{jk}=\delta_{jk}(\mathcal{B})$, for any $\mathcal{B}\in G\cdot\mathcal{A}\cap\mathsf{U}$. 
\end{lem}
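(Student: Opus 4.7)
The plan is to unwind the three layers in the definition of $\psi$: the assignment $\mathcal{A}\mapsto\pi_{\mathcal{A}}$, the Plücker embedding $\mathbb{G}(2,n)\hookrightarrow\mathbb{P}^{N-1}$, and the well-definedness of the resulting projective point. Since all the real content has already been absorbed into Lemma \ref{lem:e=00003De'}, this lemma is essentially a reformulation of the definition of $\psi$ in Plücker coordinates, and I would write it that way.

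First, I would pick any $\mathcal{B}\in G\cdot\mathcal{A}\cap\mathsf{U}$ (which is nonempty by the definition of $\mathcal{U}$, and whose image in $\mathsf{U}\setminus\mathsf{K}$ is nonempty because $\mathcal{A}\notin\mathcal{K}$). By definition, $\pi_{\mathcal{A}}=\pi_{\mathcal{B}}\in\mathbb{G}(2,n)$ is the $2$-plane in $\mathbb{F}^{n}$ spanned by the two rows $e=(e_{1},\dots,e_{n})$ and $b=(b_{1},\dots,b_{n})$ of the matrix $P_{\mathcal{B}}$, which has rank $2$ precisely because $\mathcal{B}\in\mathsf{U}\setminus\mathsf{K}$. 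Recall now that the Plücker embedding sends a $2$-plane with basis recorded as the rows of a $2\times n$ matrix $M$ to the projective point whose homogeneous coordinates are the $2\times 2$ minors $M_{jk}$ of $M$ indexed by pairs $j<k$. Applied to $M=P_{\mathcal{B}}$, the $(j,k)$-minor is exactly $b_{j}e_{k}-e_{j}b_{k}=\delta_{jk}(\mathcal{B})$, giving
\[
\psi(\mathcal{A})=[\delta_{12}(\mathcal{B}):\delta_{13}(\mathcal{B}):\cdots:\delta_{n-1,n}(\mathcal{B})]\in\mathbb{P}^{N-1}.
\]

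Finally I would verify that this projective point is independent of the choice of $\mathcal{B}$. Two choices $\mathcal{B},\mathcal{B}'\in G\cdot\mathcal{A}\cap\mathsf{U}$ satisfy $\mathcal{B}'=g\cdot\mathcal{B}$ for some $g\in G$, and Lemma \ref{lem:e=00003De'} gives $\pi_{\mathcal{B}}=\pi_{\mathcal{B}'}$. Equivalently, looking at equation (\ref{eq:plane}), $e'=e$ and $b'=x^{2}b-xze$, so the rows of $P_{\mathcal{B}'}$ are obtained from those of $P_{\mathcal{B}}$ by multiplication on the left by a $2\times 2$ matrix of determinant $x^{2}$; hence every $2\times 2$ minor is scaled by the common factor $x^{2}\neq 0$, and the Plücker coordinates $[\delta_{jk}]$ are unchanged as a point of $\mathbb{P}^{N-1}$. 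The only step that requires any care is this last well-definedness check, and it is immediate from Lemma \ref{lem:e=00003De'}; there is no serious obstacle.
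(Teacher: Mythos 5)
Your proposal is correct and follows essentially the same route as the paper's proof: both observe that the formula is just the Plücker embedding applied to $P_{\mathcal{B}}$, that some $\delta_{jk}\neq 0$ because $P_{\mathcal{B}}$ has rank $2$ (equivalently, by Lemma \ref{lem:comm}), and that well-definedness reduces to Lemma \ref{lem:e=00003De'} via Equation (\ref{eq:plane}). Your phrasing of the last step as left-multiplication by a matrix of determinant $x^{2}$ is a slightly more conceptual packaging of the same computation the paper carries out entrywise.
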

\begin{proof}
The formula follows from the definition of the Plücker embedding.
We just need to show that the map is well defined. The point $[\delta_{12}:\delta_{13}:\cdots:\delta_{n-1,n}]$
is in projective space since at least a pair of terms, say $A_{j}$
and $A_{k}$ do not commute, so that $\delta_{jk}(\mathcal{B})=b_{j}e_{k}-e_{j}b_{k}$
is nonzero, by Lemma \ref{lem:comm}. On the other hand for a different
sequence $\mathcal{B}'\in G\cdot\mathcal{A}\cap\mathsf{U}$, Equation
(\ref{eq:plane}) and $e=e'$ (Lemma \ref{lem:e=00003De'}) imply
\[
\delta_{jk}(\mathcal{B}')=b'_{j}e'_{k}-e'_{j}b'_{k}=x(b_{j}x-e_{j}z)e_{k}-e_{j}x(b_{k}x-e_{k}z)=x^{2}\delta_{jk}(\mathcal{B}),\]
so the point $\psi(\mathcal{A})\in\mathbb{P}^{N-1}$ is indeed independent
of the choice of $\mathcal{B}\in G\cdot\mathcal{A}\cap\mathsf{U}$. 
\end{proof}
%
{}By Lemma \ref{lem:Plucker}, the quotients $\delta_{jk}/\delta_{lm}$
(for $j\neq k$ and $l\neq m$) are well defined rational $G$-invariant
functions on $\mathcal{U}\setminus\mathcal{K}$ (defined on the open
dense complement of $\delta_{lm}^{-1}(0)\subset\mathcal{U}\setminus\mathcal{K}$),
so they descend to the quotient space $(\mathcal{U}\setminus\mathcal{K})/G$.
Note, however, that these are not quotients of regular (polynomial)
invariants.%
{}

Define now the map $\Psi:(\mathcal{U}\setminus\mathcal{K})/G\to\mathbb{F}^{n}\times\mathbb{F}^{N+n}\times\mathbb{P}^{N-1}$,
$N=\binom{n}{2}$, by \[
\Psi([\mathcal{A}])=\left(\{t_{j}\}_{j=1,...,n},\ \{t_{jk}\}_{j,k=1,...,n},\ \psi\right),\]
where $[\mathcal{A}]$ denotes the conjugacy class of $\mathcal{A}$,
$t_{j}=\tr(A_{j})$, $t_{jk}=\tr(A_{j}A_{k})$, and $\psi$ was given
by Lemma \ref{lem:Plucker}. The main result of this subsection is
the following. For the proof, we use a standard consequence of the
Noether-Deuring theorem; namely, if $\mathcal{A}$ and $\mathcal{A}'$
are elements in $V_{m,n}(\mathbb{F})$ and $g\in GL_{m}(\bar{\mathbb{F}})$
verifies $g\cdot\mathcal{A}=\mathcal{A}'$ (i.e, similarity over $\bar{\mathbb{F}}$)
then they are similar over $\mathbb{F}$ (see eg. \cite{CR} p. 200).

\begin{thm}
\label{thm:rational-invariants}Let $n\geq2$. The map $\Psi$ is
two-to-one. More precisely, the rational invariants $\delta_{jk}/\delta_{lm}$,
together with the regular invariants $t_{j}$ and $t_{jk}$, distinguish
$G$-orbits in $\mathcal{U}\setminus\mathcal{K}$, except for the
identification $e\leftrightarrow-e$, when written in triangular form.
\end{thm}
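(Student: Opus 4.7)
The plan is to reduce to upper triangular representatives, recover the ordered diagonals from $t_j$ and $t_{jk}$ up to a global swap $e\leftrightarrow-e$, and then use $\psi$ to recover the strictly upper triangular part up to the residual upper triangular conjugation.

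First, conjugate both $\mathcal{A}$ and $\mathcal{A}'$ into $\mathsf{U}\setminus\mathsf{K}$. For $\mathcal{A}\in\mathsf{U}$, a direct computation (cf.\ Remark~\ref{rem:relations}) gives $u_{jk}:=2t_{jk}-t_jt_k=e_je_k$. Since $\mathcal{A}\notin\mathsf{K}$, some $e_{j_0}\neq0$ by Lemma~\ref{lem:comm}, so the value of $u_{j_0j_0}=e_{j_0}^2$ determines $e_{j_0}(\mathcal{A}')$ up to a sign, and the relations $e_j=u_{j_0j}/e_{j_0}$ then determine the full $e$-vector of $\mathcal{A}'$ up to a single global sign flip. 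Combined with $t_j=a_j+d_j$, the ordered diagonals $(a_j,d_j)$ of $\mathcal{A}$ and $\mathcal{A}'$ agree either exactly or under the global swap $(a_j,d_j)\leftrightarrow(d_j,a_j)$. To handle both cases uniformly I would introduce the involution $\iota$ on $(\mathcal{U}\setminus\mathcal{K})/G$ sending a class to the class containing any upper triangular representative with diagonals simultaneously swapped (and strictly upper triangular entries unchanged), and verify that $\iota$ is well defined on orbits and satisfies $\Psi\circ\iota=\Psi$ (the $t_j,t_{jk}$ are manifestly invariant, and each $\delta_{jk}$ changes sign, leaving $\psi$ fixed in projective space).

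Second, after possibly replacing $\mathcal{A}'$ by a representative of $\iota[\mathcal{A}']$, the ordered diagonals of $\mathcal{A}$ and $\mathcal{A}'$ coincide. Any $g\in G$ with $g\cdot\mathcal{A}=\mathcal{A}'$ must then be upper triangular: writing $g=\begin{pmatrix}x & z\\ y & w\end{pmatrix}$, the $(2,1)$-entry of $gA_jg^{-1}$ equals $y(we_j-yb_j)/\det g$, which cannot vanish simultaneously in $j$ unless $y=0$, since $P_{\mathcal{A}}$ has rank $2$. For such an upper triangular $g$, the calculation leading to Equation~(\ref{eq:plane}) yields the action $b_k\mapsto\alpha b_k+\beta e_k$ with $(\alpha,\beta)\in\mathbb{F}^*\times\mathbb{F}$. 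The hypothesis $\psi([\mathcal{A}])=\psi([\mathcal{A}'])$ provides an $\alpha\in\mathbb{F}^*$ with $\delta_{jk}(\mathcal{A}')=\alpha\,\delta_{jk}(\mathcal{A})$ for all $j,k$; setting $\beta:=(b'_{j_0}-\alpha b_{j_0})/e_{j_0}$, the identity $\delta_{j_0k}(\mathcal{A}')=\alpha\,\delta_{j_0k}(\mathcal{A})$ rearranges to $b'_k-\alpha b_k=\beta e_k$ for every $k$. This exhibits the explicit upper triangular $g=\begin{pmatrix}\alpha & -\beta\\ 0 & 1\end{pmatrix}$ realizing $\mathcal{A}'=g\cdot\mathcal{A}$, so $[\mathcal{A}']=[\mathcal{A}]$ or $[\mathcal{A}']=\iota[\mathcal{A}]$ depending on whether the swap was needed.

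The main obstacle is the well-definedness of $\iota$ on orbits in $(\mathcal{U}\setminus\mathcal{K})/G$: this amounts to showing that any two upper triangular representatives of the same $G$-orbit differ by an upper triangular conjugation (which automatically preserves diagonals). This is the extension of Lemma~\ref{lem:e=00003De'} from $SL_2$ to $GL_2$, with the same argument: non-commutativity (rank $2$ for $P$) forces the $(2,1)$-entry of the conjugating matrix to vanish. Once $\iota$ is established, the above shows that the fiber of $\Psi$ over $\Psi([\mathcal{A}])$ is contained in $\{[\mathcal{A}],\iota[\mathcal{A}]\}$, yielding the two-to-one statement, with the two preimages generically distinct (as can be seen from an explicit pair with $[\mathcal{A}]\neq\iota[\mathcal{A}]$).
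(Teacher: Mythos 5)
Your proof is correct and follows essentially the same path as the paper's: fix upper triangular representatives, recover the vector $e$ up to a global sign from $u_{jk}=2t_{jk}-t_jt_k=e_je_k$, and then recover $b$ modulo the residual upper triangular conjugation from the projective data $\psi$. There is one step where your argument genuinely improves on the paper's. After deducing $b'=\alpha b+\beta e$, the paper conjugates by $g=\bigl(\begin{smallmatrix}\sqrt{\alpha}&-\beta/\sqrt{\alpha}\\0&1/\sqrt{\alpha}\end{smallmatrix}\bigr)$, which requires a square root of $\alpha$ and so only lives in $SL_2(\bar{\mathbb{F}})$; the paper then invokes the Noether--Deuring theorem to descend from similarity over $\bar{\mathbb{F}}$ to similarity over $\mathbb{F}$. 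Your matrix $g=\bigl(\begin{smallmatrix}\alpha&-\beta\\0&1\end{smallmatrix}\bigr)$ lies directly in $GL_2(\mathbb{F})$ (recall $G=GL_2$, not $SL_2$, so determinant one is not required), realizes the same conjugation, and eliminates the detour through $\bar{\mathbb{F}}$ entirely. Your introduction of the involution $\iota$ is a tidier bookkeeping of what the paper handles in a single closing sentence, and the extension of Lemma~\ref{lem:e=00003De'} from $SL_2$ to $GL_2$ that you correctly flag as needed is immediate, since the extra factor $1/\det g$ in the $(2,1)$-entry does not affect its vanishing. One minor strengthening: in characteristic $\neq 2$ your ``generically distinct'' can be replaced by ``always distinct'', because $[\mathcal{A}]=\iota[\mathcal{A}]$ would, by the same triangular-conjugation argument, force $e=-e$ and hence $e=0$, contradicting $\mathcal{A}\notin\mathcal{K}$.
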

\begin{proof}
Let both sequences $\mathcal{A}$ and $\mathcal{A}'$ be triangularizable
and non-commutative, so there exist $\mathcal{B}\in G\cdot\mathcal{A}\cap(\mathsf{U}\setminus\mathsf{K})$
and $\mathcal{B}'\in G\cdot\mathcal{A}'\cap(\mathsf{U}\setminus\mathsf{K})$.
Let us write $t_{j}=\tr(B_{j})=\tr(A_{j})$ etc, as before, and use
primed letters to denote corresponding quantities for $\mathcal{A}'$
or $\mathcal{B}'$. To prove injectivity, suppose $\Psi([\mathcal{A}])=\Psi([\mathcal{A}'])$
so that $t_{j}=t_{j}'$ and $t_{jk}=t_{jk}'$ for all appropriate
indices. Then, \[
e_{j}e_{k}=2t_{jk}-t_{j}t_{k}=2t_{jk}'-t_{j}'t_{k}'=e_{j}'e_{k}'.\]
Since any triple $(x^{2},xy,y^{2})\in\mathbb{F}^{3}$ determines $(x,y)$
up to sign, the equation above implies that $e'=\pm e$, as vectors
in $\mathbb{F}^{n}$. Suppose $e'=e$. The hypothesis $\Psi([\mathcal{A}])=\Psi([\mathcal{A}'])$
means also that $\psi(\mathcal{B})=\psi(\mathcal{B}')$. Thus $\pi_{\mathcal{B}}=\pi_{\mathcal{B}'}$
as planes in $\mathbb{G}(2,n)$ due to the injectivity of the Plücker
map. Therefore $b'=\alpha b+\beta e$ for some complex numbers $\alpha,\beta$,
$\alpha\neq0$. So, using the invertible \[
g=\left(\begin{array}{cc}
\sqrt{\alpha} & -\beta/\sqrt{\alpha}\\
 & \sqrt{\alpha}^{-1}\end{array}\right),\]
where $\sqrt{\alpha}$ is any square root of $\alpha$ in $\bar{\mathbb{F}}$,
it is easy to see that $g\cdot\mathcal{B}=\mathcal{B}'$, so that
$\mathcal{A}$ and $\mathcal{A}'$ are similar over $\bar{\mathbb{F}}$.
So, $\mathcal{A}$ and $\mathcal{A}'$ are also similar over $\mathbb{F}$.
Finally, with $e'=-e$ (note $e\neq0$ by hypothesis), one can see
that $\mathcal{B}'$ is not similar to $\mathcal{B}$, hence the result.%
{}
\end{proof}
In view of Proposition \ref{Prop:Artin}, Theorem \ref{thm:rational-invariants}
and Proposition \ref{pro:phi} give a solution to the invariants problem
(ii) for non-commutative sequences. Theorem \ref{thm:Invariants}
provides a more efficient solution (and works for any characteristic
$\neq2$), and its proof will be given after exploring the canonical
forms described in the next section.

\begin{rem}
Finding an analogous map in the commutative case is more involved!
See Friedland, \cite{Fr} for a discussion of the case of a commuting
pair of $2\times2$ matrices. However, testing similarity of commutative
$2\times2$ matrix sequences is a trivial task after reduction to
triangular form, as recalled in Appendix B.
\end{rem}
%
{}We end this section with the following conjecture on the generalization
of Proposition \ref{pro:sim3} to $m\times m$ matrices. Since an
irreducible sequence is a generic sequence and the conjugacy classes
of triangularizable or block-triangularizable sequences seem to depend
on less data (i.e, less regular or rational invariants) than the irreducible
case, we propose the following problem. Define the {}``semisimple
similarity number'' $S(m)$ as\[
S(m)=\min\{k\in\mathbb{N}:\quad\forall n\in\mathbb{N},\ \forall\mathcal{A},\mathcal{B}\in\mathcal{S}_{n,m},\quad\mathcal{A}\sim\mathcal{B}\Leftrightarrow\mathcal{A}_{J}\sim\mathcal{B}_{J}\ \forall J\mbox{ with }|J|\leq k\},\]
where $\mathcal{S}_{n,m}\subset V_{n,m}$ is the subset of semisimple
sequences. 

\begin{conjecture*}
For arbitrary $m\times m$ sequences (not necessarily semisimple)
$\mathcal{A}\sim\mathcal{B}$ if and only if $\mathcal{A}_{J}\sim\mathcal{B}_{J}$
for all index vectors $J$ with length $\leq S(m)$.
\end{conjecture*}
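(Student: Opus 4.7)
The plan is to reduce the conjecture to facts about semisimple sequences (where, by the very definition of $S(m)$, it is automatic) and to an induction on $m$ via block-triangularization. By Artin's theorem generalizing Proposition \ref{Prop:Artin}, a non-stable sequence $\mathcal{A}\in V_{m,n}$ admits a semisimplification $\mathcal{A}_{ss}$, the unique closed orbit in the Zariski closure of $G\cdot\mathcal{A}$, obtained by passing to the associated graded of any composition series. Because regular $G$-invariants are constant on orbit closures, Procesi's Theorem \ref{thm:Procesi} gives $t_J(\mathcal{A})=t_J(\mathcal{A}_{ss})$ for every multi-index $J$. Thus from the hypothesis $\mathcal{A}_J\sim\mathcal{B}_J$ for all $|J|\le S(m)$ one first recovers $\mathcal{A}_{ss}\sim\mathcal{B}_{ss}$ (taking $S(m)\ge N(m)$, which is free of charge because the semisimple case is already a subcase of the conjecture).

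Next, assuming $\mathcal{A}_{ss}\sim\mathcal{B}_{ss}$, one conjugates both sequences into block-upper-triangular form with the same block-diagonal part, with diagonal blocks of sizes $m_1,\dots,m_r$, each $m_i<m$. Applied to each diagonal block, the induction hypothesis combined with the observation that passing to a subsequence commutes with taking diagonal blocks shows that the block-diagonal parts of $\mathcal{A}$ and $\mathcal{B}$ become literally equal after a further conjugation, since $S(m_i)\le S(m)$. So we are reduced to the case where $\mathcal{A}$ and $\mathcal{B}$ are block-upper-triangular, with the same block-diagonal.

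The residual and hardest step is to show that the \emph{extension data}, i.e.\ the off-diagonal blocks, can be distinguished by subsequences of length $\le S(m)$. The remaining similarity group is the stabilizer in $G$ of the common diagonal part, a semidirect product of $\prod_i\mathrm{Aut}(\mathrm{block}_i)$ with the unipotent radical of the parabolic, acting on the ``Ext'' data, which is a sequence of $n$ matrices of sizes $m_i\times m_j$ for $i<j$. The strategy I would pursue here is twofold: first, establish a Procesi-type generation statement for the polynomial invariants of this smaller reductive action, bounding the required ``word length''; second, prove a stabilizer lemma generalizing Lemma \ref{lem:stabilizer}, namely that a generic subsequence of length $\le S(m)$ already has trivial stabilizer (modulo center) inside this smaller group, so that any additional term $A_j$ is pinned down by its similarity class joint with that subsequence.

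The main obstacle is precisely this last step: controlling how the parabolic stabilizer acts on the off-diagonal data and bounding the length of subsequences needed to separate its orbits. For $m=2$ this is trivial, because the diagonal blocks are $1\times1$ scalars and the off-diagonal data is a single scalar sequence, handled directly by Lemma \ref{lem:stabilizer}. A natural first test case would be $m=3$ with a $2{+}1$ block decomposition, where the off-diagonal data is a sequence of column vectors and the stabilizer action is explicit; working this case out should indicate the right bookkeeping and whether the bound $S(m)$ genuinely suffices, or must be replaced by a slightly larger combinatorial quantity built from the $S(m_i)$ and the dimensions of the off-diagonal Hom-spaces.
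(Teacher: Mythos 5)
This statement is labeled a \emph{conjecture} in the paper, and the paper offers no proof of it; the only thing established there is the case $m=2$, via Proposition \ref{pro:sim3} and the computation $S(2)=3$ from Proposition \ref{pro:ReducedTriple}. So there is no ``paper proof'' to compare your proposal against --- you are sketching an attempt at an open problem.

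As a proof attempt, your proposal has two genuine gaps, the second of which you honestly flag yourself. First, the reduction to $\mathcal{A}_{ss}\sim\mathcal{B}_{ss}$ is not as clean as stated. You cannot ``take $S(m)\ge N(m)$ for free'': $S(m)$ is defined as a minimum, and the paper only gives the inequality $S(m)\le N(m)$, so the hypothesis supplies equality of the invariants $t_K$ only for multi-indices $K$ of length at most $S(m)$, which may fall short of the $N(m)$ required by Procesi's Theorem \ref{thm:Procesi}. Nor can you simply restrict to subsequences and invoke the definition of $S(m)$, because semisimplification does not commute with passage to subsequences: an irreducible (stable) $\mathcal{A}$ has $\mathcal{A}_{ss}=\mathcal{A}$, yet $\mathcal{A}_{J}$ may be reducible, so that $(\mathcal{A}_{ss})_J\neq(\mathcal{A}_J)_{ss}$ in general. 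Making the first step rigorous therefore already requires an argument you have not supplied. Second, and as you acknowledge, even granting the reduction to block-upper-triangular sequences with a common block diagonal, the key step --- bounding the length of subsequences needed to separate orbits of the extension data under the parabolic stabilizer --- is exactly where the difficulty lies and is not carried out. In short: your high-level strategy (semisimplify, induct on block sizes, handle extension data) is a reasonable line of attack, consistent with the Artin/Procesi circle of ideas the paper invokes, but it is an incomplete sketch of a statement the paper itself leaves as a conjecture, not a proof.
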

Note that, by Procesi's Theorem, $S(m)\leq N(m)$. This conjecture
is true for $m=2$, by Proposition \ref{pro:sim3}, since Proposition
\ref{pro:ReducedTriple} gives $S(2)=3$.

\section{Canonical Forms and Reconstruction of Sequences\label{sec:CanonicalForms}}

\subsection{Canonical forms}

We now describe canonical forms for $2\times2$ matrix sequences over
an algebraically closed field $\bar{\mathbb{F}}$. Informally, this
means the indication, for each given matrix sequence $\mathcal{A}\in V_{n}$,
of an element in its conjugacy class which has a simple form, and
this same simple form should be used for the biggest possible set
of sequences, hence the term `canonical'. We will assume $n\geq2$
since such canonical forms for $n=1$ are provided by the well known
Jordan decomposition. Also, for commutative sequences, the canonical
forms are the same as for $n=1$. This is recalled in Appendix B.
For non-commutative sequences, it turns out that canonical forms can
be divided into 5 cases.%
{}

By the preceding results, it is no surprise that we need to consider
distinct canonical forms for the stable and for the reducible cases.
We make the following choices.

\begin{defn}
We say that a stable (i.e, irreducible) matrix sequence $\mathcal{A}=(A_{1},A_{2},...)$
is in \emph{(stable) canonical form} if $A_{1}$ is in Jordan canonical
form and $b_{2}=1$. We say that a triangularizable sequence $\mathcal{A}$
is in \emph{(triangular) canonical form} if $\mathcal{A}$ is upper
triangular, $A_{1}$ is diagonal and $b_{2}=1$.
\end{defn}
Recall that, by Proposition \ref{lem:red-non-ss} a triangularizable
sequence of length $n\geq2$ has at least $n-1$ semisimple (or diagonalizable)
terms. By contrast, a stable sequence can have all terms which are
non-diagonalizable. To see this, just consider the family of matrices
of the form\[
\left(\begin{array}{cc}
-\alpha\beta & \alpha^{2}\\
-\beta^{2} & \alpha\beta\end{array}\right),\]
for some parameters $\alpha$ and $\beta$ not both zero. Then, all
these matrices are similar to the $2\times2$ Jordan block with zero
diagonal (i.e, $\alpha=1$, $\beta=0$), but the $\sigma$ of two
of these is zero only when the vectors $(\alpha,\beta)$ are colinear.
We have, however, the following fact.

\begin{prop}
\label{pro:ss->sigma}Let $\mathcal{A}$ have reduced length $n=2$
or $n\geq4$. Then $\mathcal{A}$ is stable if and only if some $\sigma_{jk}\neq0$. 
\end{prop}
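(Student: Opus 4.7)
The plan is to reduce the statement to the previously established pair and quadruple results, via the notion of maximal reduction. First, note that one direction is formal: if $\mathcal{A}$ is triangularizable then all $\sigma_{jk}(\mathcal{A})=0$ by Proposition~\ref{pro:sigma=00003D3D0}, and since over $\bar{\mathbb{F}}$ Proposition~\ref{Prop:Artin} identifies stability with non-triangularizability, the contrapositive gives that some nonzero $\sigma_{jk}$ forces stability.

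For the converse, I would assume all $\sigma_{jk}(\mathcal{A})=0$ and aim to show $\mathcal{A}$ is triangularizable (hence not stable, again by Proposition~\ref{Prop:Artin}). Let $\mathcal{A}'=(A_{i_1},\dots,A_{i_n})$ be a maximal reduction of $\mathcal{A}$ of length $n\in\{2\}\cup\{k:k\geq 4\}$. By Corollary~\ref{cor:reduce}, triangularizability of $\mathcal{A}$ is equivalent to triangularizability of $\mathcal{A}'$, so it is enough to handle the reduced case; also, the vanishing $\sigma_{jk}(\mathcal{A})=0$ descends to the corresponding entries of $\mathcal{A}'$.

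Now the two allowed values of $n$ are handled separately. If $n=2$, then $\mathcal{A}'=(A'_1,A'_2)$ is a pair with $\sigma(A'_1,A'_2)=0$; since $\bar{\mathbb{F}}$ is algebraically closed, each term is individually triangularizable, and Theorem~\ref{thm: pair} gives that $\mathcal{A}'$ is triangularizable. If $n\geq 4$, then $\mathcal{A}'$ is reduced of length $\geq 4$, and Theorem~\ref{thm:leq2} reduces triangularizability to checking all length-$\leq 2$ subsequences; but each such pair has $\sigma=0$ and individually triangularizable terms (by algebraic closure), so Theorem~\ref{thm: pair} again disposes of them. In either case $\mathcal{A}'$ is triangularizable, and we are done.

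There is no real obstacle here: the only thing to flag is the exclusion of reduced length $3$, which is genuinely necessary because Example~\ref{exa:example} exhibits a reduced triple with all $\sigma_{jk}=0$ that fails to be triangularizable (and thus would give a stable sequence with all $\sigma_{jk}$ vanishing). The proof is essentially assembly, and the only care needed is in matching indices when passing between $\mathcal{A}$ and its maximal reduction $\mathcal{A}'$.
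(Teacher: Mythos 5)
Your proof is correct and follows essentially the same route as the paper: the forward direction combines Propositions~\ref{pro:sigma=00003D3D0} and~\ref{Prop:Artin}, and the converse reduces via a maximal reduction to the pair and quadruple criteria. The only cosmetic difference is that you invoke Corollary~\ref{cor:reduce} and Theorem~\ref{thm:leq2} explicitly, whereas the paper cites Friedland's criterion (Theorem~\ref{thm: pair}) for $n=2$ and Proposition~\ref{pro:quadrup} for $n\geq4$ directly.
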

\begin{proof}
If $\mathcal{A}$ is not stable, then it is triangularizable, by Proposition
\ref{Prop:Artin}, so that all $\sigma=0$, by Proposition \ref{pro:sigma=00003D3D0}.
Conversely, suppose $\mathcal{A}$ is stable with reduced length $n=2$.
Then $\sigma_{12}\neq0$ because of Friedland's result (or Theorem
\ref{thm:Flo}). Finally, if $n\geq4$, Proposition \ref{pro:quadrup}
implies that at least one $\sigma_{jk}\neq0$. On the other hand,
Example \ref{exa:example} shows that the statement is not true for
$n=3$. 
\end{proof}
%
{}

\begin{thm}
\label{thm:canonical}All non-commutative matrix sequences can be
put in canonical form. More precisely, after rearranging terms, any
sequence is similar to a sequence with $A_{1},A_{2}$ and $A_{3}$
as described by the following table. 
\end{thm}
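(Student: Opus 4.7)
The plan is to split into the stable and triangularizable cases via Proposition \ref{Prop:Artin}, and to build an explicit conjugate in canonical form in each. In both, the strategy is the same: first normalize $A_1$ by an arbitrary $g\in G$, then use the (one-parameter, modulo scalars) stabilizer of $A_1$ to normalize $A_2$ so that $b_2=1$. Once this is done the arranged pair $(A_1,A_2)$ will satisfy $[A_1,A_2]\neq 0$, so by Lemma \ref{lem:stabilizer} its stabilizer consists of scalars; consequently $A_3,\ldots,A_n$ are uniquely determined in the orbit, and the table merely records which of the five configurations arises.

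For the triangularizable case, I would first conjugate $\mathcal{A}$ to upper triangular form. A key observation is that for upper triangular $A_j,A_k$ one has $[A_j,A_k]_{12}=e_jb_k-b_je_k$, which vanishes whenever $e_j=e_k=0$; hence any two non-diagonalizable (Jordan-type) upper triangular terms automatically commute. Non-commutativity then forces some term of the non-commuting pair to be non-scalar \emph{diagonalizable}: move it to position~$1$ and diagonalize it by a further upper-triangular conjugation $\bigl(\begin{smallmatrix}1 & r\\ 0 & 1\end{smallmatrix}\bigr)$ with $r=b_1/e_1$, which preserves the upper triangularity of the whole sequence. If every remaining term had $b_k=0$, all terms would be diagonal and the sequence commutative; so pick one with $b_k\neq 0$, place it at position~$2$, and rescale by $\mathrm{diag}(1,b_2)$ (which stabilizes the diagonal $A_1$) to obtain $b_2=1$.

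In the stable case, pick a non-scalar term, place it in position~$1$, and use algebraic closure to conjugate $A_1$ into Jordan form. If $A_1=\mathrm{diag}(\lambda_1,\lambda_2)$ with $\lambda_1\neq\lambda_2$, its stabilizer modulo scalars is the diagonal torus, acting on any $A_k$ by $b_k\mapsto(t/u)b_k$; stability rules out the sequence being lower triangular, so some $b_k\neq 0$, and the torus produces $b_2=1$. If instead $A_1=\lambda I+N$ is a Jordan block with $N=\bigl(\begin{smallmatrix}0 & 1\\ 0 & 0\end{smallmatrix}\bigr)$, its stabilizer modulo scalars is the one-parameter unipotent $\{I+sN\}$, under which the $(1,2)$-entry of any $A_k$ transforms into the polynomial $b_k-s\,e_k-s^2c_k$. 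Stability prevents the sequence from being upper triangular, so there is some $A_k$ with $c_k\neq 0$; over $\bar{\mathbb{F}}$ the resulting quadratic in $s$ is surjective and can be solved for the value~$1$, producing the desired $A_2$.

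The main technical obstacle is the Jordan-block subcase, where algebraic closure is genuinely used to invert the quadratic in $s$; beyond that, the five types in the table are read off by a routine case analysis on whether $A_3$ is scalar, commutes with $A_1$ alone, is diagonalizable, or is itself a Jordan block, with uniqueness of $A_3$ in each type guaranteed by Lemma \ref{lem:stabilizer}.
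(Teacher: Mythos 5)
Your overall strategy matches the paper's in broad strokes: split via Proposition \ref{Prop:Artin} into the stable and triangularizable cases, normalize $A_1$ to Jordan form, then use the stabilizer of $A_1$ (the diagonal torus or the one-parameter unipotent) to normalize $A_2$, and invoke Lemma \ref{lem:stabilizer} for uniqueness of the remaining terms. The computations with the stabilizer actions are correct, and the observation that non-diagonalizable upper-triangular matrices commute with each other is a clean way to produce a diagonalizable leading term in the triangularizable case. However, there is a genuine gap in how you \emph{select} the leading pair, and without fixing it the conjugate you produce need not match any of the five rows of the table.

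Concretely, in the stable case you pick \emph{any} non-scalar term as $A_1$ and then any term with $b_k\neq 0$ (resp.\ $c_k\neq 0$) as $A_2$. This can land outside the table. For instance, take the $\mathfrak{sl}_2$-triple $A_1=\mathrm{diag}(1,2)$, $A_2=\bigl(\begin{smallmatrix}0&1\\0&0\end{smallmatrix}\bigr)$, $A_3=\bigl(\begin{smallmatrix}0&0\\1&0\end{smallmatrix}\bigr)$, which is irreducible. Your recipe produces $A_1$ diagonal, $b_2=1$, with $\sigma_{12}=\sigma_{13}=0$ but $\sigma_{23}\neq 0$; this satisfies neither the conditions of 1.a ($\sigma_{12}\neq 0$) nor of 1.c (all $\sigma$ vanish), so it is not a row of the table. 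Symmetrically, if you happen to place a Jordan-block term first while the sequence has a diagonalizable non-scalar term, your $A_2$ can be diagonalizable, giving $\delta_1=0$, $\delta_2\neq 0$, again not in the table. The paper avoids these traps by first rearranging so that $\sigma_{12}\neq 0$ whenever possible (which Proposition \ref{pro:ss->sigma} guarantees unless the reduced length is exactly $3$), and only then deciding between 1.a, 1.b by looking at diagonalizability of \emph{that} pair; the residual all-$\sigma=0$ case (reduced length $3$) is isolated as 1.c via Example \ref{exa:example}. The analogous care is needed in the triangularizable case: with one non-diagonalizable term present you must place \emph{that} term at position $2$, not an arbitrary term with $b_k\neq 0$, to land in row 2.b. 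Your proof should be amended to make these preferential choices explicit; as written, the claim that {}``the table merely records which of the five configurations arises'' does not follow from the argument given.
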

\begin{center}
\renewcommand{\multirowsetup}{\centering}\begin{tabular}{|c|c|c|c|}
\hline 
\multirow{3}{35mm}{stable case} & 1.a & $\sigma_{12}\neq0$, $\delta_{1}\neq0$  & $A_{1}$ diagonal, $b_{2}=1$\tabularnewline
\cline{2-2} \cline{3-3} \cline{4-4} 
\multicolumn{1}{|c|}{} & 1.b & $\sigma_{12}\neq0$, $\delta_{1}=\delta_{2}=0$ & $A_{1}$ Jordan block, $b_{2}=1$\tabularnewline
\cline{2-2} \cline{3-3} \cline{4-4} 
\multicolumn{1}{|c|}{} & 1.c & $\sigma_{12}=\sigma_{13}=\sigma_{23}=0$, $\Delta_{123}\neq0$ & $A_{1}$ diagonal, $b_{2}=1$\tabularnewline
\hline 
\multirow{2}{35mm}{triangularizable case} & 2.a & all diagonalizable  & $A_{1}$ diagonal, $b_{2}=1$ \tabularnewline
\cline{2-2} \cline{3-3} \cline{4-4} 
\multicolumn{1}{|c|}{} & 2.b & 1 non-diagonalizable ($A_{2}$) & $A_{1}$ diag., $A_{2}$ Jordan bl.\tabularnewline
\hline
\end{tabular}\renewcommand{\multirowsetup}{\raggedright}
\par\end{center}

\begin{proof}
Let us show that only the five possibilities above occur, and may
be put in the given forms, starting when $\mathcal{A}$ is stable.
If $n\geq4$ or $n=2$, Proposition \ref{pro:ss->sigma} implies that
there is some $\sigma\neq0$, so we can rearrange the terms of $\mathcal{A}$
so that we have $\sigma_{12}\neq0$ (in particular, $[A_{1},A_{2}]\neq0$).
If $A_{1}$ or $A_{2}$ is diagonalizable, we have possibility 1.a.
Assuming $A_{1}$ diagonalizable, we may suppose that $A_{1}$ is
already in diagonal form. Since $A_{1}$ is nonscalar $\delta_{1}\neq0$,
and the stabilizer of $A_{1}$ are the diagonal invertible matrices
$H\subset G$. Then $\sigma_{12}=e_{1}^{2}b_{2}c_{2}\neq0$ implies
that both $b_{2}$ and $c_{2}$ are nonzero. Let $g=\mbox{diag}(x,x^{-1})$
for some $x\in\bar{\mathbb{F}}^{*}$. A simple computation shows that
$g^{-1}A_{1}g$ is diagonal and $A_{2}'=g^{-1}A_{2}g$ has $b_{2}'=x^{-2}b_{2}$.
So, we can solve for $x$ in order to have $b_{2}'=1$, as claimed.
If $A_{1}$ and $A_{2}$ are both not diagonalizable, we have case
1.b. In this case, we may suppose $A_{1}$ is already in Jordan form.
Conjugating $A_{2}$ with a matrix of the form \[
g=\left(\begin{array}{cc}
1 & z\\
 & 1\end{array}\right),\]
a simple computation shows that we can solve for $z$ in order to
obtain $b_{2}=1$. If $n=3$ and some $\sigma_{12}\neq0$, we return
to cases 1.a or 1.b. So it remains the case $n=3$ and all $\sigma=0$,
which is 1.c. From Example \ref{exa:example} we see that necessarily
$\Delta_{123}\neq0$ (after an eventual rearrangement of terms), and
a diagonal conjugation will achieve $b_{2}=1$. Finally, when $\mathcal{A}$
is triangularizable, so by Proposition \ref{lem:red-non-ss} either
none or one of the terms of $\mathcal{A}$ are non-semisimple, cases
2.a and 2.b respectively. The forms mentioned will be obtained by
conjugation with a diagonal invertible matrix.
\end{proof}
In the above table we have $[A_{1},A_{2}]\neq0$ in all cases, so
if $g$ stabilizes $(A_{1},A_{2})$, then $g$ is scalar, by Lemma
\ref{lem:stabilizer}. Therefore, we have a uniqueness statement.

\begin{prop}
\label{pro:UniqueCanonical}Let $\mathcal{A},\mathcal{B}$ be two
sequences in canonical form with $[A_{1},A_{2}]\neq0$. Assume that
$(B_{1},B_{2})=(A_{1},A_{2})$. Then, $\mathcal{A}\sim\mathcal{B}$
if and only if $\mathcal{A}=\mathcal{B}$.
\end{prop}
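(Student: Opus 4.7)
The plan is to reduce this almost immediately to Lemma \ref{lem:stabilizer}, which the paragraph preceding the proposition has already flagged as the relevant tool. The ``if'' direction is of course trivial (identical sequences are similar via $g=I$), so all the content is in the ``only if'' direction, and even there the hypothesis $(B_1,B_2)=(A_1,A_2)$ is doing most of the work.

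First, I would unpack what similarity means: assume $\mathcal{A}\sim\mathcal{B}$ and pick $g\in G$ with $g\cdot\mathcal{A}=\mathcal{B}$, so that $gA_jg^{-1}=B_j$ for every $j$. Applying this to $j=1,2$ and using the assumption $(B_1,B_2)=(A_1,A_2)$ gives $g\cdot(A_1,A_2)=(A_1,A_2)$, i.e. $g$ lies in the simultaneous stabilizer of the pair $(A_1,A_2)$.

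Next I would invoke Lemma \ref{lem:stabilizer}: since $[A_1,A_2]\neq 0$ by hypothesis, that lemma forces $g$ to be a scalar matrix $g=\lambda I$ for some $\lambda\in R^{\times}$. But a scalar matrix acts trivially by conjugation on any matrix, so $B_j=gA_jg^{-1}=A_j$ for every index $j$ in the (possibly infinite) sequence. Hence $\mathcal{B}=\mathcal{A}$, as claimed.

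There is essentially no obstacle here: the canonical form itself is not used at all in the argument, only the non-commutativity of the first two terms, which the canonical form table of Theorem \ref{thm:canonical} guarantees in every one of the five listed cases. (The role of the canonical form is played elsewhere: it is what makes the hypothesis $(B_1,B_2)=(A_1,A_2)$ achievable in applications, not what drives this particular uniqueness statement.) So the proof is essentially two lines, and the only care needed is to note that the conclusion holds for sequences of any finite or countably infinite length, since Lemma \ref{lem:stabilizer} pins down $g$ from the first two terms alone and then the remaining equalities $B_j=A_j$ follow term-by-term.
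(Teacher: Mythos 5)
Your argument is exactly the one the paper intends: the sentence immediately preceding the proposition already observes that $[A_1,A_2]\neq 0$ together with Lemma~\ref{lem:stabilizer} forces any $g$ stabilizing $(A_1,A_2)$ to be scalar, and the proposition is stated as a direct consequence. Your write-up correctly unpacks this (including the remark that the canonical form is only used to guarantee $[A_1,A_2]\neq 0$ and that scalars act trivially by conjugation), so it matches the paper's proof.
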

%
{}

\subsection{Reconstruction of sequences from invariants}

We continue to work over an algebraically closed field $\bar{\mathbb{F}}$,
and here we restrict to characteristic $\neq2$. Let $\mathcal{S}'(\bar{\mathbb{F}})$
denote the subset of $V_{n}(\bar{\mathbb{F}})$ of semisimple sequences
such that $A_{1}$ is diagonalizable and $[A_{1},A_{2}]\neq0$. Then
$\mathcal{S}'(\mathbb{F})$ consists of irreducible (stable) sequences
that can be put in the form 1.a.%
{} Let $\bar{\Phi}:\mathcal{S}'(\bar{\mathbb{F}})/G(\bar{\mathbb{F}})\to\bar{\mathbb{F}}^{4n-3}$
denote the map\begin{equation}
\bar{\Phi}([\mathcal{A}]):=\left(t_{1},t_{11},t_{2},t_{22},t_{12},...,t_{k},t_{1k},t_{2k},s_{12k}...,t_{n},t_{1n},t_{2n},s_{12n}\right).\label{eq:PhiBar}\end{equation}

We now describe a process of (re)constructing a sequence from its
values under the map $\bar{\Phi}$. Let $v\in\bar{\mathbb{F}}^{4n-3}$
be given. We want to find $\mathcal{A}\in\mathcal{S}'(\bar{\mathbb{F}})$
such that $\bar{\Phi}([\mathcal{A}])=v$. 

Let $a_{1}$ and $d_{1}$ (in $\bar{\mathbb{F}}$) be the roots of
the polynomial $\lambda^{2}-t_{1}\lambda+\frac{t_{1}^{2}-t_{11}}{2}=0$
(the characteristic polynomial of a diagonal matrix whose trace is
$t_{1}$ and the trace of its square is $t_{11}$). If they are equal,
there is no solution to our problem simply because an $\mathcal{A}$
satisfying $\bar{\Phi}([\mathcal{A}])=v$ will have either $A_{1}$
non-diagonalizable or $[A_{1},A_{2}]=0$. So, with $e_{1}=a_{1}-b_{1}\neq0$,
put $b_{1}=c_{1}=0$, $b_{2}=1$ and\begin{eqnarray}
c_{2} & = & \frac{t_{22}-a_{2}^{2}-d_{2}^{2}}{2}=\frac{1}{4e_{1}^{2}}\left|\begin{array}{cc}
\tau_{11} & \tau_{12}\\
\tau_{21} & \tau_{22}\end{array}\right|\neq0,\nonumber \\
\left(\begin{array}{c}
a_{2}\\
d_{2}\end{array}\right) & = & \left(\begin{array}{cc}
1 & 1\\
a_{1} & d_{1}\end{array}\right)^{-1}\left(\begin{array}{c}
t_{2}\\
t_{12}\end{array}\right).\label{eq:ReconstructPair}\end{eqnarray}
Then, one easily checks that the pair $(A_{1},A_{2})\in V_{2}(\bar{\mathbb{F}})$
whose entries are $a,b,c,d\in\bar{\mathbb{F}}^{2}$ is in canonical
form 1.a and satisfies $\bar{\Phi}([\mathcal{A}])=(t_{1},t_{11},t_{2},t_{22},t_{12})$.
Moreover, this pair is unique except for the choice of assigning to
$a_{1}$ or $d_{1}$ one or the other root of the characteristic polynomial.
The two possible choices are:\[
\left(\left(\begin{array}{cc}
a_{1}\\
 & d_{1}\end{array}\right),\left(\begin{array}{cc}
a_{2} & 1\\
c_{2} & d_{2}\end{array}\right)\right),\quad\left(\left(\begin{array}{cc}
d_{1}\\
 & a_{1}\end{array}\right),\left(\begin{array}{cc}
d_{2} & 1\\
c_{2} & a_{2}\end{array}\right)\right),\]
 These pairs are similar (for $e_{1}$ and $c_{2}$ both non-zero),
with similarity matrix\begin{equation}
g=\left(\begin{array}{cc}
 & 1/x\\
-x\end{array}\right),\quad\quad x=\sqrt{-c_{2}}.\label{eq:a<->d}\end{equation}
Moreover, up to a non-zero scalar multiple, this is the unique matrix
sending one pair to the other. Let $\mathcal{A}^{\vee}:=g\cdot\mathcal{A}$
denote the sequence obtained by acting with this matrix. Note that
$[\mathcal{A}]=[\mathcal{A}^{\vee}]$ and if $\mathcal{A}$ is in
canonical form, then so is $\mathcal{A}^{\vee}$. Now, let\begin{equation}
\left(\begin{array}{c}
a_{k}\\
b_{k}\\
c_{k}\\
d_{k}\end{array}\right)=\left(\begin{array}{cccc}
1 & 0 & 0 & 1\\
a_{1} & 0 & 0 & d_{1}\\
a_{2} & c_{2} & 1 & d_{2}\\
0 & -c_{2}e_{1} & e_{1} & 0\end{array}\right)^{-1}\left(\begin{array}{c}
t_{k}\\
t_{1k}\\
t_{2k}\\
s_{12k}\end{array}\right),\quad\quad k=3,...,n.\label{eq:ReconstructSS}\end{equation}
The determinant of this matrix is $-2e_{1}^{2}c_{2}$, so our hypothesis
imply that it is indeed invertible. Hence the transformation above
provides an isomorphism of vector spaces, from the variables $(a_{k},b_{k},c_{k},d_{k})$
to the variables $(t_{k},t_{1k},t_{2k},s_{12k})$, $k=3,...,n$. 

Now, consider the reconstruction of triangularizable sequences. Let
$\mathcal{U}'(\bar{\mathbb{F}})$ denote the subset of $V_{n}(\bar{\mathbb{F}})$
of triangularizable sequences such that $A_{1}$ is diagonalizable
and $[A_{1},A_{2}]\neq0$ (so that $\mathcal{U}'(\bar{\mathbb{F}})\subset\mathcal{U}\setminus\mathcal{K}$).
Define the map $\bar{\Psi}:\mathcal{U}'(\bar{\mathbb{F}})/G\to\bar{\mathbb{F}}^{2n}\times\mathbb{P}^{n-2}$,
where $\mathbb{P}^{k}$ denotes the projective space over $\bar{\mathbb{F}}$
of dimension $k$, by the formula \begin{eqnarray*}
\bar{\Psi}([\mathcal{A}]) & = & \left(t_{1},t_{11},...,t_{k},t_{1k},...,t_{n},t_{1n};\ \psi'\right),\end{eqnarray*}
where $\psi'([\mathcal{A}])=[1:\delta_{13}:\cdots:\delta_{1n}]\in\mathbb{P}^{n-2}$,
$\delta_{jk}:=b_{j}e_{k}-e_{j}b_{k}$ assuming $\mathcal{A}$ is in
upper triangular form. Let $w\in\bar{\mathbb{F}}^{2n}\times\mathbb{P}^{n-2}$
be given. Again, let $a_{1}$ and $d_{1}$ be the (distinct) roots
of the characteristic polynomial $\lambda^{2}-t_{1}\lambda+\frac{t_{1}^{2}-t_{11}}{2}=0$.
Then, with $e_{1}=a_{1}-d_{1}\neq0$, put $b_{1}=0$, $b_{2}=1$,
$c_{j}=0$, $j=1,...,n$, and\begin{eqnarray*}
\left(\begin{array}{c}
a_{k}\\
d_{k}\end{array}\right) & = & \left(\begin{array}{cc}
1 & 1\\
a_{1} & d_{1}\end{array}\right)^{-1}\left(\begin{array}{c}
t_{k}\\
t_{1k}\end{array}\right),\\
b_{k} & = & -\frac{\delta_{1k}}{e_{1}},\quad k=3,...,n.\end{eqnarray*}
Then the matrix sequence $\mathcal{A}$ with the entries $a,b,c,d\in\bar{\mathbb{F}}$
is in triangular canonical form (2.a or 2.b) and satisfies $\bar{\Psi}([\mathcal{A}])=w$,
with $\psi'(\mathcal{A})=[1:\delta_{13}:\cdots:\delta_{1n}]\in\mathbb{P}^{n-2}$.
The other solution $\mathcal{A}'$ is obtained by changing $e$ to
$-e$ (that is, exchanging $a$ with $d$). We have.

\begin{prop}
The map $\bar{\Phi}:\mathcal{S}'(\bar{\mathbb{F}})/G(\bar{\mathbb{F}})\to\bar{\mathbb{F}}^{4n-3}$
is injective, and $\bar{\Psi}:\mathcal{U}'(\bar{\mathbb{F}})/G(\bar{\mathbb{F}})\to\bar{\mathbb{F}}^{2n}\times\mathbb{P}^{n-2}$
is two-to-one.
\end{prop}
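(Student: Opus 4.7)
The plan is to deduce both statements from the explicit reconstruction formulas (\ref{eq:ReconstructPair})--(\ref{eq:ReconstructSS}) already derived, combined with the canonical-form uniqueness supplied by Theorem \ref{thm:canonical}, Proposition \ref{pro:UniqueCanonical}, and Theorem \ref{thm:rational-invariants}. In both cases I would first show existence (already handled by the reconstruction), then uniqueness of the canonical form for a given value of the invariants up to a single discrete ambiguity ($a_1 \leftrightarrow d_1$), and finally analyze whether the two resulting canonical forms lie in the same $G$-orbit.

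For injectivity of $\bar{\Phi}$, I would take $\mathcal{A} \in \mathcal{S}'(\bar{\mathbb{F}})$, conjugate it into canonical form 1.a (Theorem \ref{thm:canonical}) with $A_1$ diagonal and $b_2 = 1$, and note that, by Lemma \ref{lem:stabilizer} and Proposition \ref{pro:UniqueCanonical}, this canonical form is uniquely determined by $\mathcal{A}$'s orbit once the pair $(A_1,A_2)$ is fixed. Formulas (\ref{eq:ReconstructPair}) then show that $v = \bar{\Phi}([\mathcal{A}])$ determines $(A_1,A_2)$ up only to the swap of eigenvalue labels $a_1 \leftrightarrow d_1$, which yields the alternative canonical form $\mathcal{A}^\vee$; formula (\ref{eq:ReconstructSS}), whose coefficient matrix has determinant $-2e_1^2 c_2 \ne 0$ in this range, then uniquely determines $A_3,\dots,A_n$. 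Crucially, in case 1.a we have $\sigma_{12} = e_1^2 b_2 c_2 \ne 0$, so $c_2 \ne 0$, and the antidiagonal matrix $g$ of (\ref{eq:a<->d}) is invertible over $\bar{\mathbb{F}}$ and realizes $\mathcal{A}^\vee = g \cdot \mathcal{A}$. So the discrete ambiguity collapses on orbits, giving injectivity.

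For $\bar{\Psi}$, the same scheme runs until the last step: given $w$, the reconstruction produces a canonical triangular form (case 2.a or 2.b) that is unique up to the swap $e \leftrightarrow -e$ (equivalently $a \leftrightarrow d$, forcing the corresponding change $b_k \mapsto -b_k$ up to scalar so that the Plücker class $\psi'$ is preserved). The essential contrast with the stable case is that now $c_2 = 0$, so the matrix $g$ of (\ref{eq:a<->d}) is singular and does not implement a similarity between $\mathcal{A}$ and $\mathcal{A}'$. I would then appeal to Theorem \ref{thm:rational-invariants}, which asserts exactly that on non-commutative triangularizable orbits the invariants used in $\bar{\Psi}$ distinguish orbits precisely up to the identification $e \leftrightarrow -e$. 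This gives that each fiber of $\bar{\Psi}$ has exactly two elements.

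The main subtle point (not a routine calculation) is justifying that in the triangularizable case the two canonical forms $\mathcal{A}$ and $\mathcal{A}'$ really lie in different orbits. My intended argument is: any $g\in G$ sending one upper-triangular canonical form to the other must swap the two eigenvalues of $A_1$ while keeping $A_1$ diagonal, forcing $g$ to be antidiagonal; but then $g$ sends $b_2 = 1$ (with $c_2 = 0$) into a matrix with $c_2' \ne 0$, contradicting upper-triangularity of $g\cdot\mathcal{A}$. So no such $g$ exists, the two orbits are distinct, and $\bar{\Psi}$ is exactly two-to-one.
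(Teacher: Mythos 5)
Your argument follows essentially the same route as the paper's: pass to canonical form, use the explicit reconstruction formulas (\ref{eq:ReconstructPair})--(\ref{eq:ReconstructSS}) to see that $\bar{\Phi}$ (resp.\ $\bar{\Psi}$) determines the canonical form up to the discrete swap $a_1\leftrightarrow d_1$, and then decide whether the two resulting canonical forms are conjugate. For $\bar{\Psi}$ the paper cites Lemma~\ref{lem:e=00003De'} ($e=e'$ for conjugate upper-triangular non-commutative forms) to see the two forms are not similar, whereas you give a short direct argument that any conjugating $g$ would have to be antidiagonal and would therefore break upper-triangularity; both resolve the point correctly. One small caveat: your mid-course appeal to Theorem~\ref{thm:rational-invariants} is not quite warranted as stated, because $\bar{\Psi}$ retains strictly fewer invariants than the map $\Psi$ of that theorem (only $t_{1k}$ rather than all $t_{jk}$, and $\psi'$ rather than the full Pl\"ucker class), so that theorem does not by itself give that $\bar{\Psi}$ is two-to-one --- but your own reconstruction-plus-antidiagonal argument already carries the proof without it.
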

\begin{proof}
Let $\mathcal{A}_{c}=(A_{1},A_{2},...)$ and $\mathcal{B}_{c}=(B_{1},B_{2},...)$
be canonical forms associated to $\mathcal{A},\mathcal{B}\in\mathcal{S}'(\bar{\mathbb{F}})$
respectively. If $\bar{\Phi}([\mathcal{A}_{c}])=\bar{\Phi}([\mathcal{B}_{c}])$
then either $(A_{1},A_{2})=(B_{1},B_{2})$ or $(A_{1},A_{2})=(B_{1},B_{2})^{\vee}$.
If the first situation holds, then $\mathcal{A}_{c}=\mathcal{B}_{c}$
by the isomorphism given by Equation (\ref{eq:ReconstructSS}). Hence,
Proposition \ref{pro:UniqueCanonical} shows that $\mathcal{A}\sim\mathcal{B}$.
In the second alternative, just replace $\mathcal{B}_{c}$ with $\mathcal{B}_{c}^{\vee}$
(using $g$ in Equation (\ref{eq:a<->d})) and the conclusion is the
same, showing that $\bar{\Phi}$ is injective. The case of $\bar{\Psi}$
is analogous, although in this case the two possibilities for a pair
in triangular canonical form with given values of $(t_{1},t_{11},t_{2},t_{22})$
are:\[
\left(\left(\begin{array}{cc}
a_{1}\\
 & d_{1}\end{array}\right),\left(\begin{array}{cc}
a_{2} & 1\\
 & d_{2}\end{array}\right)\right),\quad\left(\left(\begin{array}{cc}
d_{1}\\
 & a_{1}\end{array}\right),\left(\begin{array}{cc}
d_{2} & 1\\
 & a_{2}\end{array}\right)\right).\]
These pairs are not similar (note $e_{1}\neq0$), by Lemma \ref{lem:e=00003De'},
resulting in $\bar{\Psi}$ being 2-1. 
\end{proof}
Now, the proof of Theorem \ref{thm:Invariants} is easy.

\noindent \emph{Proof of Theorem} \ref{thm:Invariants}: Let $\mathbb{F}$
be a field of characteristic $\neq2$ and $\bar{\mathbb{F}}$ its
algebraic closure. We have shown that $\bar{\Phi}:\mathcal{S}'(\bar{\mathbb{F}})/G(\bar{\mathbb{F}})\to\bar{\mathbb{F}}^{4n-3}$
is injective. Then, the map $\Phi':\mathcal{S}'(\mathbb{F})/G(\mathbb{F})\to\mathbb{F}^{4n-3}$
defined in Equation (\ref{eq:Phi'}), having the same form of $\bar{\Phi}$,
is just its restriction to $\mathcal{S}'(\mathbb{F})/G(\mathbb{F})$,
being therefore injective as well. The inclusion $\mathcal{S}'(\mathbb{F})/G(\mathbb{F})\subset\mathcal{S}'(\bar{\mathbb{F}})/G(\bar{\mathbb{F}})$
reflects the fact that two sequences in $\mathcal{S}'(\mathbb{F})$,
similar over $G(\bar{\mathbb{F}})$, are also similar over $G(\mathbb{F})$,
by the Noether-Deuring theorem (see eg. \cite{CR} p. 200). The case
of $\bar{\Psi}$ is analogous.\hfill{}$\square$

\begin{rem}
\label{rem:NoRestriction}Finally, we argue that the other cases of
semisimple non-commutative sequences in the table of Theorem \ref{thm:canonical}
can also be reconstructed by a simple modification of the above procedure.
In the case 1.b, we substitute the pair $(A_{1},A_{2})$ by the pair
$(A_{1}-A_{2},A_{1}+A_{2})$, and in the case 1.c (assuming, without
loss of generality, that $A_{1}$ is diagonal non-scalar), we perform
the substitution $(A_{1},A_{2},A_{3})\mapsto(A_{1},A_{2}+A_{3},A_{2}-A_{3})$.
In both cases we end up with a matrix in $\mathcal{S}'$, that is,
the first matrix is diagonalizable non-scalar and the first pair does
not commute. 
\end{rem}
In conclusion, the only conjugacy classes that the maps $\Phi'$ and
$\Psi'$ fail to distinguish (not counting the involution $e\leftrightarrow-e$
in the $\Psi'$ case, and after the reduction to $\mathcal{S}'$ described
above) are the following types:\[
\left(\begin{array}{cc}
a & b\\
 & a\end{array}\right),\quad\left(\begin{array}{cc}
a\\
 & a\end{array}\right),\quad a,b\in\mathbb{F}^{n}.\]
which have the same value under $\Phi'$ (note that these are not
in the domain of $\Psi'$, as they are commutative), regardless of
$b\in\mathbb{F}^{n}$.%
{}

\appendix

\section{Triangularization of singlets over $R$}

In this Appendix, we treat the triangularization problem for a single
$2\times2$ matrix over an integral domain $R$. We do not claim originality
of Proposition \ref{pro:principal} and Lemma \ref{lem:eigenvector}
below, although the author was unable to find a suitable reference.
Their exposition is mainly intended to provide self-contained proofs
of Theorems \ref{thm:Flo} and Theorem \ref{thm: pair}.

When is a single matrix with entries in $R$ triangularizable? Let
us write\begin{equation}
A=\left(\begin{array}{cc}
a & b\\
c & d\end{array}\right)\in V_{1}(R),\quad\quad g=\left(\begin{array}{cc}
x & z\\
y & w\end{array}\right)\in GL_{2}(R).\label{eq:A,g}\end{equation}
Since $g$ is invertible, so is $\de g=xw-yz$, and conjugation of
$A$ by $g^{-1}$ gives:\begin{equation}
g^{-1}\cdot A=g^{-1}\ A\ g=\frac{1}{xw-yz}\left(\begin{array}{cc}
* & bw^{2}+ezw-cz^{2}\\
cx^{2}-exy-by^{2} & *\end{array}\right).\label{eq:g.A}\end{equation}
So, triangularizing $A$ amounts to finding a solution $(x,y,z,w)\in R^{4}$
to one of the equations $cx^{2}-exy-by^{2}=0$ or $bw^{2}+ezw-cz^{2}=0$,
such that $xw-yz$ is invertible in $R$. Note that both equations
are given by the same quadratic form \[
Q(x,y):=cx^{2}-exy-by^{2}=\frac{1}{2}(x,y)Q_{A}(x,y)^{T}\]
 associated to matrix\[
Q_{A}=\left(\begin{array}{cc}
2c & -e\\
-e & -2b\end{array}\right).\]
The discriminant of $Q$ is $\delta_{Q}:=-\de Q_{A}=e^{2}+4bc=\tr^{2}A-4\de A$,
and coincides precisely with the discriminant $\delta_{A}$ of the
characteristic polynomial of $A$. %
{} 

Equation (\ref{eq:g.A}) provides a necessary condition for triangularization:
If $A$ is triangularizable over $R$, then its eigenvalues lie in
$R$. Indeed, if $g^{-1}Ag=T$ for some $g\in GL_{2}(R)$ and some
upper triangular matrix $T$, $\delta_{A}=\delta_{T}=(\lambda_{1}-\lambda_{2})^{2}$
is a square in $R$, where $\lambda_{1},\lambda_{2}\in R$ are the
diagonal elements of $T$, which are also the eigenvalues of $A$. 

A necessary and sufficient condition is the following.

\begin{prop}
\label{pro:principal}Let $A\in V_{1}(R)$ be a $2\times2$ matrix
over an integral domain $R$. $A$ is triangularizable if and only
if it has an \emph{eigenvector} of the form $(x,y)\in R^{2}$, such
that $xR+yR=R$ (in particular, the ideal $(x,y)$ is principal).
\end{prop}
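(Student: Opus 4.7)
The plan is to prove both implications directly, using the observation from Equation~(\ref{eq:g.A}) that the first column of the conjugating matrix is an eigenvector of $A$.

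For the ``only if'' direction, suppose $g^{-1}Ag = T$ is upper triangular for some $g \in GL_2(R)$. Writing $g$ as in Equation~(\ref{eq:A,g}) and reading off the first column of $Ag = gT$, I see that $(x,y)^T$ is an eigenvector of $A$ (with eigenvalue equal to the $(1,1)$-entry of $T$). Since $g \in GL_2(R)$, its determinant $xw - yz$ is a unit in $R$, which exhibits $1 = (xw - yz) \cdot (xw-yz)^{-1}$ as an $R$-linear combination of $x$ and $y$; hence $xR + yR = R$.

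For the ``if'' direction, suppose $(x,y) \in R^2$ is an eigenvector of $A$ (with eigenvalue $\lambda \in R$) and $xR + yR = R$. Choose $p,q \in R$ with $xp + yq = 1$ and set
\[
g = \begin{pmatrix} x & -q \\ y & p \end{pmatrix} \in SL_2(R) \subset GL_2(R).
\]
The first column of $g^{-1}Ag$ equals $g^{-1} A (x,y)^T = g^{-1}(\lambda x, \lambda y)^T = \lambda\, g^{-1} g\, (1,0)^T = (\lambda, 0)^T$, so $g^{-1}Ag$ is upper triangular, as required.

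Neither step is really an obstacle: the whole content is the equivalence between ``first column of an element of $GL_2(R)$'' and ``unimodular pair in $R^2$.'' The only subtlety worth flagging is that, in the eigenvalue equation $A(x,y)^T = \lambda(x,y)^T$, the scalar $\lambda$ must lie in $R$ (not merely in the fraction field) for the argument to work; this is automatic from the preceding remark that triangularizability over $R$ forces the eigenvalues to lie in $R$, and in the ``if'' direction it is built into the definition of eigenvector used here. It is also worth noting that this reformulates triangularization as the existence of a nontrivial \emph{unimodular} solution $(x,y) \in R^2$ to the quadratic equation $Q(x,y) = cx^2 - exy - by^2 = 0$ introduced just above, which is the form in which the result is applied in the proof of Theorem~\ref{thm: pair}.
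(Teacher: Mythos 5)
Your proof is correct and takes essentially the same approach as the paper: in both directions, the key point is completing the unimodular eigenvector $(x,y)$ to a matrix $g\in GL_{2}(R)$. The one genuine improvement is your verification in the ``if'' direction — computing the first column of $g^{-1}Ag$ as $g^{-1}A(x,y)^{T}=\lambda(1,0)^{T}$ — which sidesteps the paper's explicit eigenvalue formulas $\lambda_{1,2}=\tfrac{a+d\pm r}{2}$ and the computation of the lower-left entry of $g^{-1}Ag$ (an argument that implicitly invokes a square root $r$ of $\delta_{A}$ and division by $2$, and is therefore a little sloppier). Incidentally, the fact that $\lambda\in R$ in the ``if'' direction need not be taken as part of the definition: from $\lambda x,\lambda y\in R$ and $xp+yq=1$ one gets $\lambda=(\lambda x)p+(\lambda y)q\in R$ directly, so the unimodularity hypothesis already forces it.
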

\begin{proof}
The equation $Ag=gT$, for some invertible $g\in G(R)$ and upper
triangular $T$, means that the first column of $g$ is an eigenvector
$(x,y)$ of $A$, so that $x,y\in R$ and there exist $w,z\in R$
(forming the second column of $g$) so that $xw-yz$ is a unit. In
particular, $xR+yR=R$. Conversely, let $A$ be as in (\ref{eq:A,g}),
with an eigenvector $(x,y)\in R^{2}$ verifying $xR+yR=R$. From simple
computations the eigenvalues of $A$ (both in $R$) are $\lambda_{1}=\frac{a+d+r}{2}$
and $\lambda_{2}=\frac{a+d-r}{2}$, where $r$ is a square root of
$\delta_{A}=e^{2}-4bc$, and the respective eigenvectors are $v_{1}=(\lambda_{1}-d,c)$
and $v_{2}=(\lambda_{2}-d,c)$ (both in $R^{2}$). So, without loss
of generality, let $(x,y)$ be in the eigenspace of $v_{1}$: $(x,y)=\alpha(\lambda_{1}-d,c)=\alpha(\frac{e+r}{2},c)$,
for some nonzero $\alpha$ in the field of fractions of $R$. By hypothesis,
there exist $z,w\in R$ verifying $xw-yz=1$. Thus, using the invertible
matrix $g$ with columns $(x,y)$ and $(z,w)$, we have that $g^{-1}Ag$
is upper triangular (with $\lambda_{1}$ and $\lambda_{2}$ in the
main diagonal). Indeed, by Equation (\ref{eq:g.A}) its lower left
entry is $cx^{2}-exy+-by^{2}=\frac{\alpha^{2}}{4}c(r^{2}-e^{2}-4bc)=0$.
\end{proof}
The following fact is used in the proof of Theorem \ref{thm: pair}. 

\begin{lem}
\label{lem:eigenvector}Fix nonzero elements $x,y$ in a field $\mathbb{F}$.
If $A\in V_{1}(\mathbb{F})$ is nondegenerate ($\delta_{A}\neq0$)
and verifies $cx^{2}-exy-by^{2}=0$, then $A$ is \emph{diagonalizable}
and one of its eigenvectors is $(x,y)$. In particular, there is another
eigenvector $(z,w)\in\mathbb{F}^{2}$ with $wx-yz\neq0$.
\end{lem}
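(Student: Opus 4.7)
My plan is to show directly that $(x,y)$ is an eigenvector, deduce that both eigenvalues lie in $\mathbb{F}$, and then invoke the distinctness of eigenvalues (from $\delta_A \neq 0$) to produce a linearly independent second eigenvector.

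\medskip

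\noindent\textbf{Step 1 (the vector $(x,y)$ is an eigenvector).} Writing $A = \begin{pmatrix} a & b \\ c & d \end{pmatrix}$ and applying $A$ to $(x,y)^T$, the eigenvector equation $A(x,y)^T = \lambda (x,y)^T$ is equivalent to the pair
\[
(\lambda - a)x = by, \qquad cx = (\lambda - d) y.
\]
Since $y \neq 0$, define $\lambda := d + cx/y$, which automatically satisfies the second equation. Substituting into the first and clearing the denominator $x$ (nonzero by hypothesis), it becomes $(a-d)xy + by^2 - cx^2 = 0$, i.e., $cx^2 - exy - by^2 = 0$, which is exactly the hypothesis. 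Hence $(x,y)$ is an eigenvector of $A$ with eigenvalue $\lambda \in \mathbb{F}$.

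\medskip

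\noindent\textbf{Step 2 (both eigenvalues are in $\mathbb{F}$ and are distinct).} Let $\lambda'$ be the other root of the characteristic polynomial $t^2 - (a+d)t + (ad - bc)$. Since $\lambda + \lambda' = a+d \in \mathbb{F}$ and $\lambda \in \mathbb{F}$ by Step~1, also $\lambda' \in \mathbb{F}$. The hypothesis $\delta_A \neq 0$ means $(\lambda - \lambda')^2 = \delta_A \neq 0$, so $\lambda \neq \lambda'$. Thus $A$ has two distinct eigenvalues in $\mathbb{F}$ and is therefore diagonalizable over $\mathbb{F}$.

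\medskip

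\noindent\textbf{Step 3 (existence of a transverse eigenvector).} Since $\lambda'$ is a simple eigenvalue in $\mathbb{F}$, its eigenspace is a one-dimensional $\mathbb{F}$-subspace of $\mathbb{F}^2$, and picking any nonzero $(z,w)$ in it gives the required eigenvector. As eigenvectors for distinct eigenvalues are linearly independent, the determinant $xw - yz$ is nonzero, which is what is claimed.

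\medskip

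There is no serious obstacle: the content of the hypothesis $cx^2 - exy - by^2 = 0$ is precisely the compatibility of the two candidate formulas $\lambda = a + by/x$ and $\lambda = d + cx/y$ arising from the two coordinate equations of $A(x,y)^T = \lambda(x,y)^T$. The role of the hypothesis that both $x$ and $y$ are nonzero is only to permit this division; with it, Step~1 is immediate, and Steps~2--3 are standard linear algebra over a field.
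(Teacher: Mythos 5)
Your proof is correct, and it takes a genuinely more direct route than the paper's. The paper observes that the constraint $cx^{2}-exy-by^{2}=0$ forces the triple $(b,e,c)$ to be a linear combination of $(-x,y,0)$ and $(0,x,y)$, writes $(b,e,c)=(-zx,\,zy+wx,\,wy)$ for some $z,w\in\mathbb{F}$, and then computes explicitly that $\delta_A=(wx-zy)^2$, that the eigenvalues are $wx$ and $yz$, and that the eigenvectors are multiples of $(x,y)$ and $(z,w)$. You instead verify directly that $\lambda:=d+cx/y\in\mathbb{F}$ is an eigenvalue with eigenvector $(x,y)$ (using $y\neq0$ to divide, and the hypothesis $cx^2-exy-by^2=0$ to reconcile the two coordinate equations), note that the other eigenvalue $\lambda'=a+d-\lambda$ is also in $\mathbb{F}$ and distinct from $\lambda$ because $\delta_A=(\lambda-\lambda')^2\neq0$, and then invoke standard linear algebra for the second independent eigenvector. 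The paper's parametrization buys explicit formulas for the eigenvalues and eigenvectors (it literally names the second eigenvector $(z,w)$, matching the statement's notation), which is mildly useful in the surrounding context of Theorem~\ref{thm: pair}; your argument is shorter, more transparent, and relies only on the standard fact that distinct eigenvalues over a field yield independent eigenvectors. Both proofs are complete and correct.
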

\begin{proof}
To satisfy $cx^{2}-exy-by^{2}=0$, the triple $(b,e,c)$ must be a
linear combination of the vectors $(-x,y,0)$ and $(0,x,y)$. So,
the matrix $A$ is given (uniquely up to the addition of a scalar)
by the triple $(b,e,c)=(-zx,zy+wx,wy)$, for some $z,w\in\mathbb{F}$.
%
{}Then, a simple computation shows that the discriminant of $A$ is
a square in $\mathbb{F}$: $\delta_{A}=(wx-zy)^{2}$. So $A$ is triangularizable
over $\mathbb{F}$. Moreover, its eigenvalues are easily checked to
be $\lambda_{1}=wx$ and $\lambda_{2}=yz$. Since, by hypothesis $\delta_{A}\neq0$,
the eigenvalues are distinct, so $A$ is diagonalizable over $\mathbb{F}$.
Also, the eigenvectors are multiples of $(x,y)$ and $(z,w)$, respectively.
\end{proof}
{}%
{}

\section{Canonical Forms and Similarity of Commuting Sequences}

For completeness, we include the following well known description
of all canonical forms of commuting sequences over an algebraically
closed field $\bar{\mathbb{F}}$. 

\begin{thm}
\label{cor:CommNF}A matrix sequence of length $n$ with coefficients
in $\bar{\mathbb{F}}$ is commutative if and only if it is similar
to a matrix sequence in one of the forms (called diagonal or triangular
forms, respectively (both forms include the scalar sequences)): \[
\mathcal{A}=\left(\begin{array}{cc}
a\\
 & d\end{array}\right),\quad\mathcal{B}=\left(\begin{array}{cc}
a & b\\
 & a\end{array}\right),\quad a,d,b\in\bar{\mathbb{F}}^{n}.\]

\end{thm}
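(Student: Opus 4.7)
The plan is as follows. For the ``if'' direction I would simply observe that any two diagonal $2\times 2$ matrices commute, and any two matrices of the form $\alpha I + \beta E_{12}$ (with $E_{12}$ the elementary matrix having a single $1$ in position $(1,2)$) commute since $E_{12}^{2}=0$. The scalar subcase is contained in both normal forms, so no separate verification is needed.

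For the ``only if'' direction I would argue as follows. Assume $\mathcal{A}=(A_{1},\dots,A_{n})$ is commutative. If every term is scalar we are done, so reorder if necessary and assume $A_{1}$ is non-scalar, hence nonderogatory as a $2\times 2$ matrix. Since $\bar{\mathbb{F}}$ is algebraically closed, $A_{1}$ is conjugate either to a diagonal matrix with distinct eigenvalues (the case $\delta_{A_{1}}\neq 0$) or to a Jordan block $a_{1}I+E_{12}$ (the case $\delta_{A_{1}}=0$). After conjugating the whole sequence by a suitable $g\in G(\bar{\mathbb{F}})$, I may assume $A_{1}$ is already in one of these two forms.

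Now I would invoke Lemma~\ref{lem:non-der}: because $A_{1}$ is nonderogatory, every $A_{j}$ commuting with $A_{1}$ can be written as $A_{j}=\alpha_{j}I+\beta_{j}A_{1}$ for some $\alpha_{j},\beta_{j}\in\bar{\mathbb{F}}$. In the diagonal case this immediately exhibits each $A_{j}$ as diagonal, producing form $\mathcal{A}$. In the Jordan case it gives $A_{j}=(\alpha_{j}+\beta_{j}a_{1})I+\beta_{j}E_{12}$, which is form $\mathcal{B}$. I expect no real obstacle here: the content is entirely supplied by Lemma~\ref{lem:non-der} (the centralizer of a nonderogatory $2\times 2$ matrix is the polynomial algebra it generates) together with the classical Jordan decomposition for a single $2\times 2$ matrix. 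The only mild subtlety is that the two normal forms overlap precisely on scalar sequences, which is why no additional case must be treated when every non-scalar term happens to be diagonalizable but some terms are scalar.
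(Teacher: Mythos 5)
Your proof is correct and takes essentially the same route as the paper. The only cosmetic differences: you invoke Lemma~\ref{lem:non-der} (centralizer of a nonderogatory $2\times2$ matrix is the degree-$1$ polynomials in it) where the paper applies Lemma~\ref{lem:comm} directly, but these deliver the same information; and you ``reorder'' to put a non-scalar term first, whereas the paper simply conjugates the first non-scalar term $A_k$ into Jordan or diagonal form --- the reordering is harmless here because both target forms are stable under permuting indices and scalar terms already lie in both, but it is cleaner to avoid it since similarity of sequences in this paper does not permit reordering.
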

\begin{proof}
The sufficiency is clear by Lemma \ref{lem:comm} (condition (\ref{eq:comm})
is satisfied because $b=0$ (resp. $e=0$) in the first (resp. second)
case). Conversely, since $\mathcal{A}$ is commutative it is triangularizable,
by Corollary \ref{cor:comm}. Let $k\geq1$ be the smallest integer
such that $A_{k}$ is non-scalar. By an appropriate conjugation, one
can assume that either $A_{k}$ is diagonal or it is written as a
single Jordan block (here we are using the assumption on $\bar{\mathbb{F}}$).
Then lemma \ref{lem:comm} implies that $\mathcal{A}$ is either in
diagonal or in triangular form, as wanted.
\end{proof}
The following consequence is clear.

\begin{cor}
Let $\mathcal{A}$ and $\mathcal{A}'$ be commutative, both of the
same form as described in the Theorem above. If $\mathcal{A}$ is
in diagonal form, then $\mathcal{A}\sim\mathcal{A}'$ if and only
if $a=a'$ and $d=d'$ or $a=d'$ and $d=a'$. If $\mathcal{B}$ is
in triangular form then $\mathcal{B}\sim\mathcal{B}'$ if and only
if $a=a'$ and $b=\lambda b'$ for some $\lambda\in\bar{\mathbb{F}}\setminus\left\{ 0\right\} $.
\end{cor}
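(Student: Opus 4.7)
The plan is to handle the diagonal and triangular cases separately, proving sufficiency by exhibiting an explicit conjugating matrix and necessity by showing that only a small list of elements of $G$ can map a sequence in canonical form to another one of the same form.

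For the diagonal case, sufficiency is immediate: if $a=a'$ and $d=d'$ then $\mathcal{A}=\mathcal{A}'$, while if $a=d'$ and $d=a'$ the anti-diagonal permutation $g=\left(\begin{smallmatrix}0&1\\1&0\end{smallmatrix}\right)$ realizes $g\cdot\mathcal{A}=\mathcal{A}'$. For necessity, first dispose of the case where all terms of $\mathcal{A}$ are scalar ($a=d$); then $\mathcal{A}\sim\mathcal{A}'$ forces $\mathcal{A}=\mathcal{A}'$ since scalars are fixed by conjugation, and both conclusions collapse to $a=a'=d=d'$. Otherwise pick an index $k$ with $A_k$ non-scalar (so $e_k=a_k-d_k\neq 0$). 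Any $g\in G$ with $gA_kg^{-1}$ again diagonal must lie in the normalizer $N_G(H)$ of the diagonal torus $H$, which has exactly two connected components: the diagonal matrices (yielding $A_j=A_j'$ for every $j$, hence $a=a'$ and $d=d'$) and the anti-diagonal ones (yielding the swap, hence $a=d'$ and $d=a'$).

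For the triangular case, write $B_j=a_jI+b_jN$ with $N=\left(\begin{smallmatrix}0&1\\0&0\end{smallmatrix}\right)$, and likewise for $\mathcal{B}'$. Taking traces on each component immediately gives $a=a'$. If $b\equiv 0$ then $\mathcal{B}=\mathcal{B}'$ and the claim holds with any $\lambda$; otherwise fix $k$ with $b_k\neq 0$. The relation $gB_kg^{-1}=B_k'$ together with $a_k=a_k'$ reduces to $b_kgNg^{-1}=b_k'N$, i.e.\ $gNg^{-1}=\lambda N$ with $\lambda:=b_k'/b_k\neq0$. A direct $2\times2$ computation with $g=\left(\begin{smallmatrix}x&z\\y&w\end{smallmatrix}\right)$ shows $gN=\lambda Ng$ forces $y=0$ and $x=\lambda w$, so $g$ is upper triangular with $\lambda=x/w$. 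Applying this single $g$ to every $B_j$ gives $b_j'=\lambda b_j$ for all $j$, i.e.\ $b=\lambda^{-1}b'$. Conversely, given $a=a'$ and $b=\lambda b'$, the diagonal matrix $g=\operatorname{diag}(\lambda^{-1},1)$ satisfies $gNg^{-1}=\lambda^{-1}N$ and hence $g\cdot\mathcal{B}=\mathcal{B}'$.

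The only mildly delicate step is the bookkeeping required to rule out a $g$ not normalizing $H$ (resp.\ not upper triangular) in the first (resp.\ second) case; this is handled by the nonscalar/non-zero hypothesis on some component, which makes the stabilizer of that component small enough to pin $g$ down. Everything else is routine.
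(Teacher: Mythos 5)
Your proof is correct, and since the paper presents this corollary without proof (``The following consequence is clear''), there is no in-text argument to compare against. Your structure is the natural one: exhibit the explicit conjugating element for sufficiency, and for necessity use the presence of a non-scalar (resp.\ non-zero $b$) component to pin $g$ down to the normalizer of the torus (resp.\ to the upper-triangular Borel), after disposing of the degenerate fully-scalar case.

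One small caveat: in the triangular case you derive $a=a'$ by ``taking traces on each component''. Since $\tr B_j = 2a_j$, this argument degenerates in characteristic $2$, and the corollary lives in Appendix~B, which does not impose a restriction on the characteristic of $\bar{\mathbb{F}}$. The fix is immediate and does not change your structure: conjugation preserves the characteristic polynomial, so $(\lambda-a_j)^2=(\lambda-a_j')^2$, hence $a_j=a_j'$ in every characteristic (in characteristic $2$ this reads $a_j^2=a_j'^2$, and the Frobenius is injective). With that replacement the argument is clean. The remaining computation $gNg^{-1}=\lambda N \Rightarrow y=0,\ x=\lambda w$ and the explicit diagonal $g=\operatorname{diag}(\lambda^{-1},1)$ for the converse are both correct.
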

%
{}%
{}%
{}

\end{comment}
{}

\bibitem[Paz]{Paz}A. Paz, \emph{An application of the Cayley-Hamilton
theorem to matrix polynomials in several variables}, Linear and Multilinear
Algebra \textbf{15} (1984) 161-170.

\bibitem[Pap]{Pap}C. J. Pappacena, \emph{An upper bound for the length
of a finite-dimensional algebra}, J. Algebra \textbf{197} (1997) 535--545.

\bibitem[Pr]{Pr}C. Procesi, \emph{The Invariant theory of $n\times n$
matrices}, Advances in Math. \textbf{19} (1976) 306-381.

\bibitem[Ra]{Ra}Yu. Razmyslov, \emph{Identities with trace in full
matrix algebras over a field of characteristic zero}, Izv. Akad. Nauk
SSSR Ser. Mat. \textbf{38} (1974), 723--756.

\bibitem[RR]{RR}H. Radjavi and P. Rosenthal, Simultaneous triangularization.
Universitext. Springer-Verlag, 2000.
\end{thebibliography}

\end{document}